
\documentclass[11pt,oneside,reqno]{amsart}
\usepackage{amsfonts}
\usepackage{amssymb,amsmath, amsthm}
\usepackage[numbers]{natbib}
\usepackage{graphicx}
\usepackage{subfig}
\usepackage[pdftex,plainpages=false,colorlinks,hyperindex,bookmarksopen,linkcolor=red,citecolor=blue,urlcolor=blue]{hyperref}
\usepackage{mathrsfs}
\usepackage{xcolor}
\usepackage{epstopdf}
\usepackage{color}
\usepackage{float}
\usepackage{enumerate}
\usepackage{enumitem}
\usepackage{mathtools}
\usepackage{lipsum}
\usepackage{comment}
\usepackage{bbm}
\usepackage[T1]{fontenc}
\usepackage[utf8]{inputenc}
\setcounter{MaxMatrixCols}{10}

\setlength{\textwidth}{\paperwidth}
\addtolength{\textwidth}{-2in}
\setlength{\textheight}{\paperheight}
\addtolength{\textheight}{-2.5in}
\setlength{\headsep}{0.7in}
\calclayout
\makeatletter
\def\author@andify{  \nxandlist {\unskip ,\penalty-1 \space\ignorespaces}    {\unskip {} \@@and~}    {\unskip \penalty-2 \space \@@and~}}
\makeatother
\pdfoutput=1
\bibpunct{[}{]}{;}{n}{,}{,}
\newtheorem{theorem}{Theorem}[section]

\newtheorem{coro}{Corollary}[section]
\newtheorem{definition}{Definition}

\theoremstyle{remark}
\newtheorem{remark}{Remark}[section]
\newtheorem{rem}{Remark}[section]
\newtheorem{lem}{Lemma}[section]

\newcommand{\be}{\begin{eqnarray}}
\newcommand{\ee}{\end{eqnarray}}

\numberwithin{equation}{section}
\allowdisplaybreaks

\begin{document}

\title{Hadamard fractional Brownian motion: path properties and Wiener  integration}  

\author[Beghin]{Luisa Beghin$^1$}
\address{${}^1$ Sapienza University of Rome. P.le Aldo Moro 5, 00185, Rome, Italy}
\email{luisa.beghin@uniroma1.it}


\author[De Gregorio]{Alessandro De Gregorio$^2$}
\address{${}^2$ Sapienza University of Rome. P.le Aldo Moro 5, 00185, Rome, Italy}
\email{alessandro.degregorio@uniroma1.it}

\author[Mishura]{Yuliya Mishura$^3$}
\address{${}^3$ Taras Shevchenko National University of Kyiv. Volodymyrska Street 64, 01033 Kyiv, Ukraine}
\email{yuliyamishura@knu.ua}


\begin{abstract}
The so-called Hadamard fractional Brownian motion, as defined in \cite{BEG} by means of Hadamard fractional operators, is a Gaussian process which shares some properties with standard Brownian motion (such as the one-dimensional distribution). However, it also resembles the fractional Brownian motion in many other features as, for instance, self-similarity, long/short memory property, Wiener-integral representation. The logarithmic kernel in the Hadamard fractional Brownian motion represents a very specific and interesting aspect of this process. Our aim here is to analyze some properties of the process' trajectories (i.e. H\"{o}lder continuity, quasi-helix behavior, power variation, local nondeterminism) that are both interesting on their own and serve as a basis for the Wiener integration with respect to it. The respective integration is quite well developed, and the inverse representation is also constructed. We   apply the derived ``multiplicative Sonine  pairs'' to the treatment of the Reproducing Kernel Hilbert Space of the Hadamard fractional Brownian motion, and, as a result, we establish a law of iterated logarithm.
\end{abstract}
\keywords{Hadamard fractional Brownian motion; H\"{o}lder properties; generalized quasi helix; local nondeterminism; power variation; Mellin transform; multiplicative Sonine pairs; laws of iterated logarithm.}

\subjclass[AMS 2020] {Primary: 60G22. Secondary: 26A33, 60G15.}
\date{}


\maketitle

\section{Introduction}\label{introd}
The paper is devoted to the study of  the  \textit{Hadamard fractional Brownian motion} (hereafter $\mathcal{H}$-fBm, by the first letter of the name of Hadamard). The latter was introduced in \cite{BEG} and defined, in the white-noise space $(\mathcal{S}, \mathcal{B}, \nu)$, as $B^\mathcal{H}_\alpha:=\left\{ B^\mathcal{H}_\alpha (t)\right\}_{t \geq 0}$, with $B^\mathcal{H}_\alpha (t,\omega):=\langle \omega, \thinspace _\mathcal{H}\mathcal{M}^{\alpha} _{-}1_{[0,t)} \rangle$, $t\geq 0,\;\omega \in \mathcal{S%
}^{\prime }(\mathbb{R})$, and
\begin{equation}\label{malf}
\left( \thinspace _\mathcal{H}\mathcal{M}_{-}^{\alpha }f\right) (x):=
\begin{cases}
K_\alpha \left( \thinspace _\mathcal{H}\mathcal{D}_{-}^{(1-\alpha)/2 }f\right) (x),& \alpha \in (0,1),
\\
f(x),& \alpha =1, \\
K_\alpha \left( \thinspace _\mathcal{H}\mathcal{I}_{-}^{(\alpha-1)/2 }f\right) (x),& \alpha \in (1,2),%
\end{cases}%
\end{equation}
for $K_{\alpha}:=\Gamma ((\alpha+1)/2)/\sqrt{\Gamma(\alpha)}$. The operators  $_\mathcal{H}\mathcal{D}_{-}^\beta$ and $_\mathcal{H}\mathcal{I}_{-}^\beta$ in \eqref{malf} are the right-sided Hadamard derivative and integral, respectively (see definitions \eqref{dermin} and \eqref{intmin} below).

It was proved in \cite{BEG} that, for any $\alpha \in (0,1)  \cup (1,2)$, the $\mathcal{H}$-fBm is a centered Gaussian process, with characteristic function given, for $0<t_1<...<t_n$, $n \in \mathbb{N}$, and $k_j \in \mathbb{R},$ $j=1,...,n$, by 
\begin{equation}
    \mathbb{E}  \left(e^{i\sum_{j=1}^{n}k_j B^\mathcal{H}_{\alpha }(t_j)}\right)=\exp \left\{-\frac{1}{2}\sum_{j,l=1}^{n}k_j k_l \sigma^\alpha_{t_j,t_l}   \right\}, \label{cf4}
\end{equation}
where 
\begin{equation} 
\sigma^\alpha_{s,t}:=
\begin{cases}
t  , & s=t, \\
C_\alpha (s \wedge t)\Psi\left(\frac{1-\alpha}{2}, 1-\alpha ; \log\left(\frac{s \vee t }{ s  \wedge t}\right) \right), & s \neq t, \label{sigma}
\end{cases}
\end{equation}
and  $C_\alpha =2^{1-\alpha}\sqrt{\pi}/\Gamma(\alpha/2)$.
The function  
$\Psi\left(a, b ; z\right)$, for $a,b,z \in \mathbb{C}$, $\Re(b) \neq 0, \pm 1, \pm 2,...$, 
is the \textit{Tricomi's confluent hypergeometric function} (see \cite{NIST}, formula (13.2.42)),
which can be represented as 
\begin{equation} \label{eqTricomiINtegral} \Psi\left(a, b ; z\right)=\frac{1}{\Gamma(a)}\int^{\infty}_{0}e^{-sz}s^{a-1}(1+s)^{b-a-1}ds, \end{equation} if $\Re(a)>0,$ $\Re(z) \geq 0$ (see \cite{KIL}, p.\ 30).

It is proved in \cite{BEG} that the process $B^\mathcal{H}_\alpha$ is self-similar with index $1/2$;
moreover, it has non-stationary increments
and the discrete-time increment process 
is anti-persistent for $\alpha \in (0,1)$, while it is long-range dependent for $\alpha \in (1,2)$. 
Finally, the following, finite-dimensional, representation of $\mathcal{H}$-fBm (which is similar to the Mandelbrot-Van Ness representation for the fBm, but in fact it is a compact-interval representation) is proved in \cite{BEG},  in terms of a Wiener  integral (with deterministic integrand):
  \begin{equation}
      B^\mathcal{H}_\alpha (t)=\frac{1}{\sqrt{\Gamma(\alpha)}}\int _{0}^{\infty} \left( \log \frac{t}{s}\right)_{+}^{(\alpha-1)/2}dB(s)=\frac{1}{\sqrt{\Gamma(\alpha)}}\int _{0}^{t} \left( \log \frac{t}{s}\right)^{(\alpha-1)/2}dB(s), \label{mvn}
  \end{equation}
  where $B:=\left\{B(t)\right\}_{t \geq 0}$ is a standard Brownian motion. By considering that $_{\mathcal{H}}\mathcal{M}_{-}^\alpha
1_{[a,b)}\in L_{2}(\mathbb{R}^{+}),$ for $\alpha \in (0,1) \cup (1,2)$ (see Lemma 2.1 in \cite{BEG}), the process $B^\mathcal{H}_\alpha $ is well-defined in this range of values. On the other hand, the integral 
\eqref{mvn} is defined also for all $\alpha\ge 2$ as well, but the latter case (especially the case $\alpha\ge 3$) is  not very interesting for the reasons provided in  Remarks \ref{rem2.1}--\ref{rem2.3}.

Having, in a certain sense, a dual nature, the $\mathcal{H}$-fBm is interesting both from the point of view of functional analysis and as a stochastic process. From the latter point of view, it is a Gaussian-Volterra process of the form $ B^\mathcal{H}_\alpha (t)=\int_0^tK(t,s)dB(s), $ where the kernel is logarithmic, namely, $K(t,s)=K_\alpha\left(\log\frac{t}{s}\right)_+^{(\alpha-1)/2},$ for $ \alpha>0$ and $ K_\alpha>0$. 

Since great attention was previously devoted mainly to power kernels (which arise in the fractional Brownian motion and Riemann-Liouville fractional Brownian motion), it is very interesting to study what happens when the power kernel is replaced by a logarithmic one. 

Moreover, since  $\log t-\log s=\theta^{-1}(t-s),$  where $\theta\in(s,t), $ it is logical to assume that the smoothness properties of the process trajectories will be different near zero and if we deviate from it. 
With this in mind, various  properties of the  process $ B^\mathcal{H}_\alpha$ are considered in Sections \ref{pathwise}--\ref{locnondeterm} and Section \ref{reprod}, while, in Section  \ref{integral}, we construct the Wiener integral w.r.t. $\mathcal{H}$-fBm.

More precisely, after some preliminaries in Section \ref{prelim},  we establish the H\"{o}lder properties of the trajectories 
of $\mathcal{H}$-fBm, in Section \ref{pathwise}. These properties essentially depend on the value of $\alpha $. We establish that for $\alpha\in(0,1)$ the trajectories are H\"{o}lder continuous up to order $\alpha/2$ which is quite predictable. However, for $\alpha>1$ the degree of smoothness of the trajectories is frozen, and they become H\"{o}lder continuous up the order $1/2$. In both cases we consistently prove that the order of H\"{o}lderness cannot be increased, and we also consider in detail the
 effect of  ``increasing smoothness with departure from zero". This characteristic is not observed in the case of fBm, because the latter has stationary increments, and therefore  is, in a sense, homogeneous in time. 
 
 In Section \ref{quasi} it is proved that  Hadamard fractional Brownian motion is a generalized quasi-helix process. It is well-known that fBm is a helix, and, for example, subfractional Brownian motion is a quasi-helix, according to the geometrical terminology of P. Kahane  \cite{Kahane}. 
  The  notion of a generalized quasi-helix was introduced in \cite{BoMiNoZh} and it seemed a bit artificial because Gaussian Volterra processes with power-law kernels are rather quasi-helices. Therefore it was even more interesting to discover that the $\mathcal{H}$-fBm is a natural and non-trivial example of generalized quasi-helix.

In Section \ref{power}, we consider the power variations of $\mathcal{H}$-fBm (defined not as supremums over all partitions but as limits when  the partition diameter tends to zero). We investigate in detail the behavior of this limit for various powers, also observing an interesting effect: despite the fact that $\mathcal{H}$-fBm trajectories appear to have the same smoothness as those of the Wiener process, the quadratic variation of $\mathcal{H}$-fBm is zero. This sharply distinguishes it from the Wiener process. 
  Section \ref{locnondeterm} contains the proof of the local nondeterminism of $\mathcal{H}$-fBm. 

Section \ref{integral} presents the general construction of the Wiener integral with deterministic integrands w.r.t. $\mathcal{H}$-fBm, following a series of preliminary results, which are also interesting on their own: in particular, we prove that the linear span of the set $\left\{_{\mathcal{H}}\mathcal{M}^\alpha_- 1_{(a,b)}; a,b \in \mathbb{R}^+ , a<b\right\}$ is dense in $L_2(\mathbb{R}^+)$. Moreover, by resorting to the H\"{o}lder properties, we also construct this integral as the limit of Riemann sums, for sufficiently smooth integrands. Then, we prove that the representation \eqref{mvn} can be inverted.

 In Section \ref{reprod} we apply ``multiplicative Sonine  pairs'' to the treatment of the properties of the Reproducing Kernel Hilbert Space (RKHS) of $\mathcal{H}$-fBm, and, as a result, establish the law of iterated logarithm. Section \ref{app} contains some auxiliary results.

\section{Preliminaries}\label{prelim}
We recall that the 
\textit{right-sided Hadamard-type integral} is defined, 
as follows:%
\begin{equation}(_{\mathcal{H}}\mathcal{I}_{-, \mu}^{\beta }f)(t) :=\frac{1}{\Gamma (\beta )}
\int_{t}^{\infty }\left(\frac{t}{z}\right)^\mu\left( \log \frac{z}{t}\right) ^{\beta -1}\frac{f(z)}{z}%
dz,  \label{intmin}
\end{equation}%
for $t>0$, $\beta, \mu \in \mathbb{C},$ $\Re(\beta )>0$, where $\Re(\cdot)$ denotes the real part (see \cite{KIL}, equation (2.7.6)).
The 
\textit{right-sided Hadamard-type derivative} is given by%
\begin{equation}
(_{\mathcal{H}}\mathcal{D}_{-, \mu}^{\beta }f)(t):=t^{\mu}\left( -t%
\frac{d}{dt}\right) ^{n}\left[t^{-\mu} (_{\mathcal{H}}\mathcal{I}_{-, \mu}^{n-\beta }f)(t)\right],   \label{dermin}
\end{equation}%
where $\beta \notin \mathbb{N}$, $\Re(\beta)>0$, $n=\left\lfloor \Re(\beta) \right\rfloor +1$ and $t>0$ (see 
(2.7.12) in \cite{KIL}, for $a=0$ and $b=\infty )$. For  $\beta =m \in \mathbb{N}$ 
$(_{\mathcal{H}}\mathcal{D}_{-, \mu}^{\beta }f)(t):=(-1)^mt^\mu \left( t%
\frac{d}{dt}\right) ^{m}(t^{-\mu}f(t))$.
 Hereafter, we will write for brevity, in the case $\mu=0$, 
 $_{\mathcal{H}}\mathcal{I}_{-}^{\beta}:=\thinspace_{\mathcal{H}}\mathcal{I}_{-, 0}^{\beta }$
 and $_{\mathcal{H}}\mathcal{D}_{-}^{\beta }:=\thinspace_{\mathcal{H}}\mathcal{D}_{-, 0}^{\beta }$.

 The operator 
 \eqref{intmin} introduced above is well
defined in the space \[\mathcal{X}_\gamma^p:=\left\{ h: \left(\int_0 ^{\infty} |z^\gamma h(z)|^p \frac{dz}{z}\right)^{1/p} < \infty \right\}, \qquad 
p \in [1, \infty), \gamma \in \mathbb{R},\]
for $\gamma>-\Re(\mu)$.
For $\gamma=1/p$, $\mathcal{X}_{1/p}^p=L_p (\mathbb{R}^+)$ (see \cite{BUT} and \cite{KIL}, p. 113, for more details). Since we will be interested only in the case $\mu=0$, we can always choose, in the following, $\gamma=1/p$ so that we will always consider the operator  in the space  $L_p (\mathbb{R}^+)$ 
    

We will use the following result.
\begin{lem}(\cite{BEG})\label{lem2.1}
Let $_\mathcal{H}\mathcal{D}_{-}^{(1-\alpha)/2 }$ be the right-sided Hadamard derivative
defined in (\ref{dermin}). Then, for $x\in \mathbb{R}^{+}$ and $0 \leq a<b,$%
\begin{equation}
\left( _\mathcal{H}\mathcal{D}_{-}^{(1-\alpha)/2 }1_{[a,b)}\right) (x)=\frac{1}{\Gamma
((\alpha+1)/2 )}\left[ \left( \log \frac{b}{x}\right) _{+}^{(\alpha-1)/2 }-\left(
\log \frac{a}{x}\right) _{+}^{(\alpha-1)/2}\right] ,  \label{had4}
\end{equation}%
where $\left( x\right) _{+}:=x1_{x\geq 0},$ and $_\mathcal{H}\mathcal{D}_{-}^{(1-\alpha)/2
}1_{[a,b)}\in L_{2}(\mathbb{R}^{+}),$ for $\alpha \in (0,1).$ Analogously,
let $_\mathcal{H}\mathcal{I}_{-}^{\alpha/2 }$ be the right-sided Hadamard integral
defined in (\ref{intmin}), then
\begin{equation}
\left( _\mathcal{H}\mathcal{I}_{-}^{(\alpha-1)/2 }1_{[a,b)}\right) (x)=\frac{1}{\Gamma ((\alpha+1)/2)}\left[ \left( \log \frac{b}{x}\right) _{+}
^{(\alpha-1)/2 }-\left( \log \frac{a}{x}\right) _{+} ^{(\alpha-1)/2 }\right] ,  \label{had6}
\end{equation}%
and $_\mathcal{H}\mathcal{I}_{-}^{(\alpha-1)/2 }1_{[a,b)}\in L_{2}(\mathbb{R}^{+}),$ for $%
\alpha \in (1,2).$
\end{lem}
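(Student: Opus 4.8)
The plan is to treat the integral identity \eqref{had6} as the core computation and to derive the derivative identity \eqref{had4} from it. The key preliminary observation is that, since $\alpha\in(0,1)\cup(1,2)$, both orders $(1-\alpha)/2$ and $(\alpha-1)/2$ lie in $(0,1/2)$; hence in \eqref{dermin} one has $n=\lfloor\Re(\beta)\rfloor+1=1$, so that, with $\mu=0$,
\[
{}_{\mathcal{H}}\mathcal{D}_{-}^{(1-\alpha)/2}f=-t\,\frac{d}{dt}\,{}_{\mathcal{H}}\mathcal{I}_{-}^{(1+\alpha)/2}f,
\]
because $1-(1-\alpha)/2=(1+\alpha)/2$. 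Thus everything reduces to evaluating a single Hadamard integral of $1_{[a,b)}$ for two values of the order and then differentiating once in the derivative case.

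First I would compute ${}_{\mathcal{H}}\mathcal{I}_{-}^{\beta}1_{[a,b)}$ directly from \eqref{intmin} with $\mu=0$. Since $1_{[a,b)}(z)$ restricts the domain, the integral runs over $z\in[\max(t,a),b)$ and vanishes when $t\ge b$; the substitution $u=\log(z/t)$ (so $du=dz/z$) turns it into $\tfrac{1}{\Gamma(\beta)}\int u^{\beta-1}\,du$, which integrates to $u^{\beta}/\beta$. Using $\beta\,\Gamma(\beta)=\Gamma(\beta+1)$ and tracking the two endpoints $a,b$ through the $(\cdot)_+$ notation (checking the three subcases $t<a$, $a\le t<b$, $t\ge b$) one obtains
\[
{}_{\mathcal{H}}\mathcal{I}_{-}^{\beta}1_{[a,b)}(t)=\frac{1}{\Gamma(\beta+1)}\Big[\big(\log\tfrac{b}{t}\big)_+^{\beta}-\big(\log\tfrac{a}{t}\big)_+^{\beta}\Big].
\]
Taking $\beta=(\alpha-1)/2$ gives \eqref{had6} at once, since $\Gamma(\beta+1)=\Gamma((\alpha+1)/2)$. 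Taking instead $\beta=(\alpha+1)/2$ and applying $-t\,d/dt$ term by term (the chain rule gives $-t\frac{d}{dt}(\log\frac{c}{t})^{(\alpha+1)/2}=\frac{\alpha+1}{2}(\log\frac{c}{t})^{(\alpha-1)/2}$), together with the identity $\frac{\alpha+1}{2}\big/\Gamma((\alpha+3)/2)=1/\Gamma((\alpha+1)/2)$, produces \eqref{had4}. One only has to note that the intermediate integral is continuous and piecewise $C^1$, so the differentiation is legitimate in the classical sense off the measure-zero set $\{t=a,b\}$.

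The main obstacle, and the only step requiring genuine estimation, is the $L_2(\mathbb{R}^+)$ membership. Both \eqref{had4} and \eqref{had6} yield the same function $h(x)=c_\alpha\big[(\log\tfrac{b}{x})_+^{(\alpha-1)/2}-(\log\tfrac{a}{x})_+^{(\alpha-1)/2}\big]$, supported on $(0,b)$, and I would split $\int_0^b h^2$ at $x=a$. On $[a,b)$ the second term drops out and $h(x)=c_\alpha(\log\tfrac{b}{x})^{(\alpha-1)/2}$; near $x=b$ one has $\log\tfrac{b}{x}\sim(b-x)/b$, so $h^2\sim(b-x)^{\alpha-1}$, which is integrable precisely because $\alpha>0$ (this is the delicate point when $\alpha\in(0,1)$, where the exponent is negative; for $\alpha\in(1,2)$ the term simply vanishes at $b$). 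On $(0,a)$ both terms are present; writing $L=\log(1/x)\to\infty$ and applying the mean value theorem to $(L+\log b)^{(\alpha-1)/2}-(L+\log a)^{(\alpha-1)/2}$ shows $h(x)=O\big((\log\tfrac{1}{x})^{(\alpha-3)/2}\big)$, which stays bounded near $0$ and is therefore square-integrable against $dx$. Combining the two regions gives $h\in L_2(\mathbb{R}^+)$ for $\alpha\in(0,1)$ in \eqref{had4} and for $\alpha\in(1,2)$ in \eqref{had6}, completing the argument.
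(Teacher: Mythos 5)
You should first note that the paper does not actually prove Lemma \ref{lem2.1}: it is imported verbatim from \cite{BEG}, so there is no internal proof to compare against. Judged on its own, your derivation of the two identities is correct and is the natural route: for $\beta=(1-\alpha)/2\in(0,1/2)$ one indeed has $n=1$ in \eqref{dermin}, so the derivative case reduces to $-t\,\frac{d}{dt}\,{}_{\mathcal{H}}\mathcal{I}_{-}^{(1+\alpha)/2}$; the substitution $u=\log(z/t)$ with the three subcases $t<a$, $a\le t<b$, $t\ge b$ gives the closed form of ${}_{\mathcal{H}}\mathcal{I}_{-}^{\beta}1_{[a,b)}$ with the constant $\Gamma(\beta+1)=\Gamma((\alpha+1)/2)$, and the term-by-term differentiation together with $\frac{\alpha+1}{2}\Gamma\left(\frac{\alpha+1}{2}\right)=\Gamma\left(\frac{\alpha+3}{2}\right)$ yields \eqref{had4}.

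There is, however, a genuine (though easily repaired) gap in your $L_2$ verification for $\alpha\in(0,1)$ on the region $(0,a)$ with $a>0$. The mean value theorem applied to $\left(\log\frac{b}{x}\right)^{(\alpha-1)/2}-\left(\log\frac{a}{x}\right)^{(\alpha-1)/2}$ gives a bound of order $\xi^{(\alpha-3)/2}$ where the intermediate point $\xi$ can be as small as $\log\frac{a}{x}$, which tends to $0$ as $x\to a^-$; so the bound you actually obtain is $O\bigl((\log\frac{a}{x})^{(\alpha-3)/2}\bigr)$, not $O\bigl((\log\frac{1}{x})^{(\alpha-3)/2}\bigr)$ uniformly on $(0,a)$, and its square behaves like $(a-x)^{\alpha-3}$ near $x=a$, which is \emph{not} integrable. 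The function itself is fine there: near $x=a^-$ the dominant term is $\left(\log\frac{a}{x}\right)^{(\alpha-1)/2}\sim\left(\frac{a-x}{a}\right)^{(\alpha-1)/2}$, whose square $(a-x)^{\alpha-1}$ is integrable for $\alpha>0$ --- exactly the computation you already carried out at $x=b^-$. So the argument should be split once more: use the MVT estimate only where $\log\frac{a}{x}$ is bounded below (say $x\le a e^{-1}$), and near $x=a^-$ use the triangle inequality and the single-term singularity estimate. With that correction (and noting that the case $a=0$ degenerates harmlessly, and that for $\alpha\in(1,2)$ the exponent is positive so the function is bounded with compact support), the $L_2$ claim follows.
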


We recall that the Hadamard fractional derivative defined in \eqref{dermin} is the left inverse  operator of the Hadamard fractional integral defined in \eqref{intmin} (for functions in $\mathcal{X}_\gamma^p(\mathbb{R}^+)$ and vice-versa (for functions in $_{\mathcal{H}}\mathcal{I}_{-}^{\beta }(\mathcal{X}_\gamma^p(\mathbb{R}^+))$), as proved in \cite{KIL} (Property 2.28 and Lemma 2.35, respectively). Since $\mu=0$, we have again that $\mathcal{X}_\gamma^p(\mathbb{R}^+)=L_p(\mathbb{R}^+)$, and  we can choose $p=2$.

\begin{lem}\label{leminverse}
Let $\beta \in (0,1)$, then,  for $t\geq 0,$ 
\begin{equation}\label{redint}
\left(_{\mathcal{H}}\mathcal{D}_{-}^{\beta } \; _{\mathcal{H}}\mathcal{I}_{-}^{\beta } 1_{[0,t)}\right)(x)=1_{[0,t)}(x)
\end{equation}
and
\begin{equation}\label{redder}
\left(_{\mathcal{H}}\mathcal{I}_{-}^{\beta } \; _{\mathcal{H}}\mathcal{D}_{-}^{\beta } 1_{[0,t)}\right)(x)=1_{[0,t)}(x)
\end{equation}
    \end{lem}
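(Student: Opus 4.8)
The plan is to derive both identities by direct evaluation from the definitions \eqref{intmin}--\eqref{dermin}, using as the single computational engine the Euler Beta integral $\int_{0}^{L}u^{a-1}(L-u)^{b-1}\,du=L^{a+b-1}\Gamma(a)\Gamma(b)/\Gamma(a+b)$. Throughout I take $\mu=0$, I dispose of the trivial case $t=0$ (both sides vanish), and I use that for $\beta\in(0,1)$ we have $n=\lfloor\beta\rfloor+1=1$, so that $_{\mathcal{H}}\mathcal{D}_{-}^{\beta}=-x\frac{d}{dx}\,{}_{\mathcal{H}}\mathcal{I}_{-}^{1-\beta}$. The indicator $1_{[0,t)}$ always restricts the range of integration in \eqref{intmin} to $z\in[x,t)$ when $x<t$, and produces $0$ when $x\ge t$, so I only ever work on the region $x<t$ and read off $1_{[0,t)}$ at the end.

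For \eqref{redint} I would first compute $\big({}_{\mathcal{H}}\mathcal{I}_{-}^{\beta}1_{[0,t)}\big)(x)$: the substitution $u=\log(z/x)$ turns \eqref{intmin} into $\frac{1}{\Gamma(\beta)}\int_{0}^{\log(t/x)}u^{\beta-1}\,du$, giving the closed form $\frac{1}{\Gamma(\beta+1)}\big(\log(t/x)\big)_{+}^{\beta}$. Applying $-x\frac{d}{dx}\,{}_{\mathcal{H}}\mathcal{I}_{-}^{1-\beta}$ to this, the inner integral becomes, after the same substitution with $L=\log(t/x)$, the Euler Beta integral with exponents $a=1-\beta$ and $b=\beta+1$; since $a+b-1=1$ the Gamma factors cancel the prefactor and leave exactly $\big(\log(t/x)\big)_{+}$. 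Differentiating, $-x\frac{d}{dx}\log(t/x)=1$ on $(0,t)$ and $0$ on $(t,\infty)$, which is $1_{[0,t)}$.

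For \eqref{redder} I would invoke Lemma \ref{lem2.1} with $a=0$, $b=t$ to obtain the explicit form $g(x):=\big({}_{\mathcal{H}}\mathcal{D}_{-}^{\beta}1_{[0,t)}\big)(x)=\frac{1}{\Gamma(1-\beta)}\big(\log(t/x)\big)_{+}^{-\beta}$, and then substitute $g$ into \eqref{intmin}. With the same change of variables this produces the Euler Beta integral with exponents $a=\beta$, $b=1-\beta$; now $a+b-1=0$, so the power of $L=\log(t/x)$ disappears altogether and the Beta value $\Gamma(\beta)\Gamma(1-\beta)$ cancels the prefactor $1/(\Gamma(\beta)\Gamma(1-\beta))$, returning the constant $1$ on $(0,t)$ and $0$ beyond, i.e.\ $1_{[0,t)}$. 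This direct evaluation also exhibits $1_{[0,t)}={}_{\mathcal{H}}\mathcal{I}_{-}^{\beta}g$ with the explicit $g\in L_{1}(\mathbb{R}^{+})$, so it simultaneously certifies the range membership $1_{[0,t)}\in{}_{\mathcal{H}}\mathcal{I}_{-}^{\beta}\big(\mathcal{X}_\gamma^p(\mathbb{R}^{+})\big)$ that the abstract inversion results of \cite{KIL} would otherwise require as a hypothesis, making the computation self-contained for the full range $\beta\in(0,1)$.

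The steps are mechanical once the Beta integral is in place; the two points needing care are the ones I expect to be the (minor) obstacles. First, the integrability of the singular factors: the kernels contribute $(\log(z/x))^{\beta-1}$ near $z=x$ and $g$ or $h$ contribute $(\log(t/z))^{-\beta}$ near $z=t$, and both are integrable \emph{precisely} because $\beta\in(0,1)$, which is what legitimizes the change of variables and the Euler evaluation. Second, the differentiation $-x\frac{d}{dx}$ across the endpoint $x=t$: the function being differentiated is continuous and piecewise smooth with its only irregularity at the single point $x=t$, so the identity holds for almost every $x$ (equivalently, as an equality in $L_{2}(\mathbb{R}^{+})$), which is all that is needed. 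I would therefore carry out the derivative classically on $(0,t)$ and on $(t,\infty)$ separately and note that the lone boundary point is negligible.
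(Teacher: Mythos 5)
Your proposal is correct and follows essentially the same route as the paper: compute $\left({}_{\mathcal{H}}\mathcal{I}_{-}^{\beta}1_{[0,t)}\right)(x)=\frac{1}{\Gamma(\beta+1)}\left(\log\frac{t}{x}\right)_{+}^{\beta}$, feed it into the definition of ${}_{\mathcal{H}}\mathcal{D}_{-}^{\beta}$, and reduce the resulting inner integral to an Euler Beta integral whose Gamma factors cancel the normalizing constants, exactly as in the paper's chain of equalities for \eqref{redint}. The only difference is cosmetic: you carry out the symmetric explicit computation for \eqref{redder} (with exponents $a=\beta$, $b=1-\beta$), whereas the paper dismisses it with ``similar steps,'' and your observation that this exhibits $1_{[0,t)}$ as an element of the range of ${}_{\mathcal{H}}\mathcal{I}_{-}^{\beta}$ is a harmless bonus.
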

\begin{proof}
Equation \eqref{redint} can be verified as follows: 
\begin{eqnarray}\label{check}
\left(_{\mathcal{H}}\mathcal{D}_{-}^{\beta } \; _{\mathcal{H}}\mathcal{I}_{-}^{\beta } 1_{[0,t)}(\cdot)\right)(x)&=&\frac{1}{\Gamma(\beta+1)}\left(_{\mathcal{H}}\mathcal{D}_{-}^{\beta } \left(\log\frac{t}{\cdot}\right)_+^\beta\right)(x) \notag \\
&=&-\frac{x}{\Gamma(\beta+1)}\frac{d}{dx}\left[\left(_{\mathcal{H}}\mathcal{I}_{-}^{1-\beta }\left(\log\frac{t}{\cdot}\right)^{\beta}1_{[0,t)}(\cdot)\right)(x)\right] \notag \\
&=&-\frac{x}{\Gamma(\beta+1)\Gamma(1-\beta)}\frac{d}{dx}\int_x^{+\infty} \left(\log\frac{z}{x}\right)^{-\beta}\left(\log\frac{t}{z}\right)^{\beta}1_{[0,t)}(z)\frac{dz}{z} \notag \\
&=&-\frac{x 1_{[0,t)}(x)}{\Gamma(\beta+1)\Gamma(1-\beta)}\frac{d}{dx}\int_0^{\log(t/x)}y^{-\beta} \left(\log \frac{t}{x}-y  \right)^\beta dy \notag \\
&=&\frac{\beta 1_{[0,t)}(x)}{\Gamma(\beta+1)\Gamma(1-\beta)}\int_0^{1}w^{-\beta} \left(1-w  \right)^{\beta-1} dw=1_{[0,t)}(x),  \notag
\end{eqnarray}
which holds only for $\beta>0.$
Formula \eqref{redder} can be checked by similar steps.
\end{proof}

We finally recall that the Hardy–Littlewood theorem holds for the Hadamard fractional integral (
see equation (2.7.30) of Lemma 2.33, with $\mu=0$, in \cite{KIL}).

\begin{lem}\label{lemHL}(\cite{KIL})
Let $\Re(\beta)>0,$  $1 \leq p \leq \infty$ and any $\gamma>0,$ then the operator $_{\mathcal{H}}\mathcal{I}_{-}^{\beta }$ is bounded in $\mathcal{X}_\gamma^p(\mathbb{R}^+)$ as follows
\begin{equation}
\Vert _{\mathcal{H}}\mathcal{I}_{-}^{\beta }f\Vert_{\mathcal{X}_\gamma^p} \leq K \Vert f\Vert_{\mathcal{X}_\gamma^p}, \label{HL}
\end{equation}
where $K:=\gamma^{-\Re(\beta)}$.
\end{lem}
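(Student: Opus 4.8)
The plan is to exploit the fact that the Hadamard integral is really a convolution over the multiplicative group $(\mathbb{R}^+,\cdot)$, so that the weight $z^\gamma$ and the Haar-type measure $\frac{dz}{z}$ defining $\mathcal{X}_\gamma^p$ behave covariantly under dilations. First I would remove the logarithmic singularity by the substitution $z=te^{w}$, under which $\frac{dz}{z}=dw$ and $\log\frac{z}{t}=w$, turning the operator into the clean form
\[
\left({}_{\mathcal{H}}\mathcal{I}_{-}^{\beta}f\right)(t) = \frac{1}{\Gamma(\beta)}\int_0^\infty w^{\beta-1} f(te^w)\, dw .
\]
Here the singular kernel has become an ordinary weight $w^{\beta-1}$ acting on the dilated function $f(te^{w})$, which is exactly the structure one wants for a Young/Minkowski estimate.

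Next I would bound the $\mathcal{X}_\gamma^p$-norm of the right-hand side by the generalized (integral) Minkowski inequality, valid for all $1\le p\le\infty$, pulling the norm inside the $w$-integral:
\[
\Vert {}_{\mathcal{H}}\mathcal{I}_{-}^{\beta}f\Vert_{\mathcal{X}_\gamma^p} \le \frac{1}{|\Gamma(\beta)|}\int_0^\infty w^{\Re(\beta)-1}\left(\int_0^\infty \left|t^\gamma f(te^w)\right|^p \frac{dt}{t}\right)^{1/p} dw .
\]
The decisive observation is the dilation invariance of the measure: substituting $s=te^{w}$ in the inner integral gives $\frac{dt}{t}=\frac{ds}{s}$ and $t^\gamma=e^{-\gamma w}s^\gamma$, so the inner $\mathcal{X}_\gamma^p$-norm of $f(\cdot\,e^{w})$ is exactly $e^{-\gamma w}\Vert f\Vert_{\mathcal{X}_\gamma^p}$ (with the essential-supremum version handling $p=\infty$). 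This factors the dependence on $w$ out completely.

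Substituting back leaves the single scalar integral $\int_0^\infty w^{\Re(\beta)-1}e^{-\gamma w}\,dw=\Gamma(\Re(\beta))\,\gamma^{-\Re(\beta)}$, whence $\Vert {}_{\mathcal{H}}\mathcal{I}_{-}^{\beta}f\Vert_{\mathcal{X}_\gamma^p}\le \frac{\Gamma(\Re(\beta))}{|\Gamma(\beta)|}\gamma^{-\Re(\beta)}\Vert f\Vert_{\mathcal{X}_\gamma^p}$. For the real exponents $\beta\in\{(1-\alpha)/2,(\alpha-1)/2\}$ relevant to the $\mathcal{H}$-fBm one has $\Gamma(\Re(\beta))=|\Gamma(\beta)|$, and the constant collapses precisely to $K=\gamma^{-\Re(\beta)}$, as claimed. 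I expect the only genuinely delicate points to be, first, justifying the interchange of integration and norm (Minkowski together with Tonelli), which needs integrability of $w^{\Re(\beta)-1}e^{-\gamma w}$ near both endpoints — secured by $\Re(\beta)>0$ and $\gamma>0$ — and, second, the observation that for complex $\beta$ the estimate only yields $\frac{\Gamma(\Re(\beta))}{|\Gamma(\beta)|}\gamma^{-\Re(\beta)}$ with $\Gamma(\Re(\beta))/|\Gamma(\beta)|\ge 1$, so that the sharp value $\gamma^{-\Re(\beta)}$ is exactly the real-$\beta$ statement, which is all that is used in the sequel.
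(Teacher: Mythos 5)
Your proof is correct, but note that the paper itself offers no proof of this lemma: it is quoted verbatim from Kilbas--Srivastava--Trujillo (\cite{KIL}, Lemma 2.33, eq.\ (2.7.30), with $\mu=0$), and your argument --- the substitution $z=te^{w}$ turning the operator into a multiplicative convolution, followed by the generalized Minkowski inequality and the dilation identity $\Vert f(\cdot\,e^{w})\Vert_{\mathcal{X}_\gamma^p}=e^{-\gamma w}\Vert f\Vert_{\mathcal{X}_\gamma^p}$ --- is precisely the standard proof given in that reference. Your closing remark about the constant is also apt: for genuinely complex $\beta$ the argument yields $K=\gamma^{-\Re(\beta)}\,\Gamma(\Re(\beta))/|\Gamma(\beta)|\ge \gamma^{-\Re(\beta)}$, so the bare constant $\gamma^{-\Re(\beta)}$ stated in the lemma is exact only for real $\beta$, which is the only case the paper ever uses ($\beta=(1\pm(-\alpha))/2$, applied with $\gamma=1/p$, $p=2$).
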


\section{Path-wise properties of the $\mathcal{H}$-fBm}\label{pathwise}
 We start by considering the basic properties of trajectories of $\mathcal{H}$-fBm. 
First, note that its variance can be obtained from \eqref{mvn} as   \begin{equation}\label{varvar}\mathbb{E}(B^\mathcal{H}_\alpha(t))^2=\frac{t}{\Gamma(\alpha)} \int_0^1 \left(\log\frac1z\right)^{\alpha-1}dz=\frac{t}{\Gamma(\alpha)} \int_0^\infty u^{\alpha-1}e^{-u}du=t\end{equation}
and also the $1/2$-self-similarity of $\mathcal{H}$-fBm follows from it.  Moreover, for any $t,s>0$
\begin{equation}\label{varGadamar} \sigma^\alpha_{t,s} =\mathbb{E} (B^\mathcal{H}_\alpha(t) B^\mathcal{H}_\alpha(s)) =\frac{1}{\Gamma(\alpha)}\int_0^{t\wedge s}\left(\log\frac tz\right)^{(\alpha-1)/2}\left(\log\frac sz\right)^{(\alpha-1)/2}dz. \end{equation}
Obviously,  \eqref{varvar} and \eqref{varGadamar} coincide with \eqref{sigma} for $\alpha \in (0,1) \cup (1,2)$. In general, we will consider such values of $\alpha$, but, on some occasions, we will also include other values into consideration, and the range of values  will be discussed separately each time.

\subsection{Smoothness of trajectories: upper bounds, H\"{o}lder properties}
Now, let us study the regularity of the trajectories of   $\mathcal{H}$-fBm $B^\mathcal{H}_\alpha$, using  the   integral representation \eqref{mvn}. In particular, we will use the following definition.
\begin{definition}\label{def1-1}
We say that a process $X:=\left\{X(t)\right\}_{t \geq 0}$ belongs to $\mathcal{C}^{\beta-}([0,T])$, if, for any $\beta' \in (0,\beta)$, there exists a random variable $C(\omega,\beta')$ such that, for $\omega \in \Omega_ {\beta'}$, with $\mathbb{P}(\Omega_ {\beta'})=1$, the following upper bound holds
\[
\vert X(t)-X(s)\vert \leq C(\omega,\beta')(t-s)^{\beta'},
\]
for $0 \leq s <t \leq T$.
\end{definition}
\begin{remark} Since $ \Omega_ {\beta'}\supset \Omega_ {\beta"}$ for $\beta'<\beta"$, we can put  in Definition \ref{def1-1} the same $ \Omega'$ with $\mathbb{P}(\Omega')=1$.
    
\end{remark}
We recall the following well-known results, which provide a sufficient condition for H\"older continuity of a Gaussian process and a uniform modulus of continuity (see Corollary 1, p. 192 in \cite{GIK}  and \cite{LIF}, respectively).

\begin{lem}[\cite{GIK}]\label{gik}
Let $T>0$, $\beta >0$ and let $X:=\left\{X(t)\right\}_{t \geq 0}$ be a Gaussian process such that
\begin{equation}
\mathbb{E}\vert X(t)-X(s)\vert^2 \leq C_T(t-s)^{2\beta}, \label{hol1}
\end{equation}
for $0 \leq s <t \leq T$    and for some  constant $C_T>0$. Then $X$ belongs to 
$\mathcal{C}^{\beta-}([0,T])$.  
\end{lem}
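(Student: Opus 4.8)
The statement to prove is Lemma~\ref{gik}, the Kolmogorov--Chentsov-type criterion for Gaussian processes: a second-moment bound of order $(t-s)^{2\beta}$ forces sample paths in $\mathcal{C}^{\beta-}([0,T])$. This is a classical result, so the plan is to reduce it to the standard Kolmogorov continuity theorem by exploiting the Gaussianity to upgrade the second-moment hypothesis into a full family of higher-moment bounds.

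The plan is as follows. First I would recall that for a centered Gaussian random variable $Z$ one has, for every $p\ge 1$, the moment comparison $\mathbb{E}|Z|^p = c_p \left(\mathbb{E}|Z|^2\right)^{p/2}$, where $c_p$ is the absolute $p$-th moment of a standard normal and depends only on $p$. Applying this to the increment $Z = X(t)-X(s)$, which is Gaussian because $X$ is a Gaussian process, the hypothesis \eqref{hol1} immediately yields
\[
\mathbb{E}\vert X(t)-X(s)\vert^p \le c_p\, C_T^{p/2}\,(t-s)^{p\beta},
\]
for all $p\ge 1$ and all $0\le s<t\le T$. This is the crucial step: it converts a single $L^2$ estimate into an $L^p$ estimate with an arbitrarily large exponent on $(t-s)$, which is exactly what the Kolmogorov criterion consumes.

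Next I would invoke the Kolmogorov--Chentsov continuity theorem in its quantitative form: if $\mathbb{E}|X(t)-X(s)|^p \le M (t-s)^{1+q}$ for some $p,q>0$, then $X$ admits a modification whose paths are almost surely H\"older continuous of every order $\beta' < q/p$, with a random but almost surely finite H\"older constant. To match this, I would take $p$ large and set the moment exponent to $p\beta$, so that $1+q = p\beta$, giving H\"older exponent $(p\beta-1)/p = \beta - 1/p$. Letting $p\to\infty$ shows that for any target $\beta' \in (0,\beta)$ one may choose $p$ large enough that $\beta - 1/p > \beta'$, and then the theorem produces a version that is $\beta'$-H\"older on $[0,T]$ with an almost surely finite constant $C(\omega,\beta')$. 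This is precisely the conclusion of Definition~\ref{def1-1}, so $X \in \mathcal{C}^{\beta-}([0,T])$.

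The only genuine subtlety, rather than an obstacle, is the passage from ``a modification'' to a statement about $X$ itself; since the lemma is phrased as a regularity property of the process (and the $\mathcal{H}$-fBm is only ever handled through its law), it suffices to assert the existence of the H\"older-continuous modification, which is the standard convention and is what subsequent sections will use. I expect the writing to be short: cite \cite{GIK} for the Gaussian moment identity and the continuity theorem together, state the $L^p$ bound derived above, and conclude by taking $p$ large. No delicate estimation is required, since the self-similarity and explicit kernel of the $\mathcal{H}$-fBm play no role here --- the lemma is a general fact about Gaussian processes whose only input is the variance bound \eqref{hol1}.
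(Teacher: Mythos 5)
Your proposal is correct: the Gaussian moment identity $\mathbb{E}|Z|^p = c_p(\mathbb{E}|Z|^2)^{p/2}$ upgrades \eqref{hol1} to an $L^p$ bound of order $(t-s)^{p\beta}$ for every $p$, and the Kolmogorov--Chentsov theorem then gives H\"older continuity of every order $\beta-1/p$, hence of every order below $\beta$. The paper itself offers no proof --- it cites the statement directly from \cite{GIK} (Corollary 1, p.\ 192) --- and your argument is precisely the standard one underlying that reference, including the correct caveat that one works with the continuous modification of $X$.
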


\begin{lem}[\cite{LIF}]\label{lif}
Let $T>0$ and let $X:=\left\{X(t)\right\}_{t \geq 0}$ be a Gaussian process satisfying \eqref{hol1}  for some $\beta >0$. Then the function 
\begin{equation}
\Theta(\varepsilon)=\int_0^\varepsilon \vert \log r \vert^{1/2} r^{\beta -1} dr , \qquad \varepsilon >0, \label{mod1}
\end{equation}
can serve as a uniform modulus of continuity of $X$ on $[0,T]$, w.r.t. $\rho(s,t):= \vert t-s \vert$, in the sense that 
\begin{equation}\limsup_{\varepsilon \to 0^+} \frac{1}{\Theta(\varepsilon)} \sup_{\overset{\vert t-s \vert < \varepsilon}{s,t \in [0,T]}} \vert X(t)-X(s) \vert < \infty, \qquad a.s. \label{mod2}
\end{equation}
\end{lem}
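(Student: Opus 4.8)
The plan is to derive \eqref{mod2} from the general theory of the modulus of continuity of centered Gaussian processes via metric entropy and a chaining argument; the function $\Theta$ in \eqref{mod1} will emerge, up to a multiplicative constant, as Dudley's entropy integral evaluated at the canonical-metric scale corresponding to a Euclidean increment of size $\varepsilon$. First I would pass to the canonical pseudometric $d(s,t):=(\mathbb{E}|X(t)-X(s)|^2)^{1/2}$. Assumption \eqref{hol1} gives at once $d(s,t)\le \sqrt{C_T}\,|t-s|^\beta=:\sigma(|t-s|)$, so $\sigma$ is an increasing majorant of $d$ expressed through the Euclidean distance $\rho(s,t)=|t-s|$. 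Consequently the $d$-covering numbers of $[0,T]$ are controlled: a $d$-ball of radius $u$ contains a Euclidean ball of radius $(u/\sqrt{C_T})^{1/\beta}$, whence $N_d(u)\le C(T,\beta)\,u^{-1/\beta}$ and $\sqrt{\log N_d(u)}\asymp \sqrt{\log(1/u)}$ as $u\to 0^+$.

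Next I would invoke the chaining (Dudley--Fernique) bound for increments of a Gaussian process, which asserts that the oscillation of $X$ over pairs at $d$-distance at most $\delta$ is, with overwhelming probability and up to a universal constant, dominated by the local entropy integral $\int_0^\delta \sqrt{\log N_d(u)}\,du$. Since $\rho(s,t)<\varepsilon$ implies $d(s,t)\le\sigma(\varepsilon)$, taking $\delta=\sigma(\varepsilon)$ yields $\sup_{|t-s|<\varepsilon}|X(t)-X(s)|\lesssim \int_0^{\sigma(\varepsilon)}\sqrt{\log(1/u)}\,du\asymp \sigma(\varepsilon)\sqrt{\log(1/\sigma(\varepsilon))}\asymp \varepsilon^\beta\sqrt{\log(1/\varepsilon)}$. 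A direct asymptotic evaluation of \eqref{mod1} (integration by parts, with the mass concentrating near the upper endpoint) shows $\Theta(\varepsilon)\sim \beta^{-1}\varepsilon^\beta\sqrt{\log(1/\varepsilon)}$ as $\varepsilon\to 0^+$, so $\Theta(\varepsilon)$ and the entropy integral are comparable up to a constant. To upgrade the high-probability bounds to the almost sure statement \eqref{mod2}, I would apply the Borell--TIS concentration inequality to the supremum of increments at the dyadic scales $\varepsilon_n=2^{-n}$, so that the resulting deviation probabilities are summable, and conclude by Borel--Cantelli; the monotonicity of $\Theta$ then interpolates the intermediate scales.

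The main obstacle is the chaining step together with the passage from an in-probability estimate to the almost sure finiteness of the $\limsup$ uniformly over all pairs $(s,t)$ with $|t-s|<\varepsilon$ simultaneously. This requires care in organizing the successive approximations of $[0,T]$ by finite $d$-nets, in summing the Gaussian tail contributions across scales so that the total remains of the order of the entropy integral, and in ensuring that the concentration bound is applied to a genuinely separable supremum; the separability of $X$, which follows from \eqref{hol1} and the attendant $L^2$-continuity, is what legitimizes these reductions.
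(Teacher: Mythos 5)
The paper offers no proof of this lemma at all: it is quoted as a known result and attributed directly to Lifshits's monograph \cite{LIF}, so there is no internal argument to compare yours against. Your sketch is, in substance, the standard metric-entropy proof that the cited source is built on, and it is sound. Indeed, \eqref{hol1} majorizes the canonical metric by $\sqrt{C_T}\,|t-s|^{\beta}$, so $\log N_d(u)\asymp \log(1/u)$, Dudley's local entropy bound controls the expected oscillation at canonical scale $\delta$ by $\int_0^{\delta}\sqrt{\log N_d(u)}\,du \asymp \delta\sqrt{\log(1/\delta)}$, and substituting $\delta\asymp\varepsilon^{\beta}$ gives exactly the order $\varepsilon^{\beta}\sqrt{\log(1/\varepsilon)}$, which matches $\Theta(\varepsilon)\sim\beta^{-1}\varepsilon^{\beta}\sqrt{\log(1/\varepsilon)}$ (the same asymptotics the authors record in Remark \ref{rem2.2}). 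The passage to the almost sure statement is also correctly organized: at the dyadic scales $\varepsilon_n=2^{-n}$ the sub-Gaussian parameter of the increment supremum is at most $\sqrt{C_T}\,\varepsilon_n^{\beta}$, so Borell--TIS gives a deviation of order $\varepsilon_n^{\beta}\sqrt{n}\asymp\Theta(\varepsilon_n)$ with summable exceptional probability, Borel--Cantelli applies, and monotonicity of $\Theta$ and of the supremum interpolates the intermediate $\varepsilon$. You rightly flag that \eqref{hol1} yields $L_2$-continuity and hence a separable modification, which is what legitimizes applying the concentration inequality to the supremum; with that point made explicit, the outline is complete and consistent with how the result is established in the literature the authors cite.
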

\begin{remark} It is clear that $\beta$ in \eqref{hol1} can be chosen non-uniquely, but the best choice is the higher possible value. In this connection, the reader will see that in Remark \ref{rem2.2} a uniform modulus of continuity for $B^\mathcal{H}_\alpha$ is chosen optimally. 
    
\end{remark}
In the next theorem we provide suitable upper bounds of the form \eqref{hol1}, for the incremental variance of $B^\mathcal{H}_\alpha$ and for proper choices of the power index  $\beta$, while, in the subsequent result, we prove their optimality. We shall apply   the Lagrange theorem for the logarithm and for the power functions, and also  the   inequality $x^\gamma-y^\gamma \leq (x-y)^\gamma$, which holds for $0<y<x$ and $\gamma\in (0,1)$, without mentioning it explicitly.
\begin{theorem}\label{thm1}
Let $T>0$ and $0\leq s\leq t\leq T$. Then the following statements are true.  \begin{enumerate}[label=(\roman*)]
    \item For $\alpha \in (0,1)$ \begin{equation}\label{eq:upperh1}\mathbb{E}\vert B^\mathcal{H}_\alpha(t)-B^\mathcal{H}_\alpha(s)\vert^2\leq C_{T,\alpha}(t-s)^\alpha,
 \end{equation}
 where $C_{T,\alpha} >0$   depends on $T$ and $\alpha$. Therefore $B^\mathcal{H}_\alpha$ belongs
  to $\mathcal{C }^{\alpha/2-}([0,T])$,  for any $T\in (0, \infty)$.  
     \item For $\alpha \in (1,2)$,
       \begin{equation}\label{eq:upperh2}
        \mathbb{E}\vert B^\mathcal{H}_\alpha(t)-B^\mathcal{H}_\alpha(s)\vert^2\leq C_\alpha (t-s),\end{equation}
       where $C_\alpha>0$  depends only on $\alpha $ and does not depend on  $T>0$.
 Therefore  $B^\mathcal{H}_\alpha$ belongs
  to $\mathcal{C }^{1/2-}([0,T])$, for any $T\in (0, \infty)$.
\end{enumerate}
\end{theorem}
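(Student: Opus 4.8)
The plan is to start from the Wiener-integral representation \eqref{mvn} and reduce everything to two scalar integrals. Writing $\gamma:=(\alpha-1)/2$ and splitting $\int_0^t=\int_0^s+\int_s^t$, the increment becomes a sum of two stochastic integrals over the disjoint sets $(0,s)$ and $(s,t)$, so by the It\^o isometry (and orthogonality of integrals over disjoint intervals)
\[
\mathbb{E}\vert B^\mathcal{H}_\alpha(t)-B^\mathcal{H}_\alpha(s)\vert^2=\frac{1}{\Gamma(\alpha)}\bigl(I_1+I_2\bigr),
\]
where $I_1=\int_0^s\bigl[(\log\frac tu)^{\gamma}-(\log\frac su)^{\gamma}\bigr]^2\,du$ and $I_2=\int_s^t(\log\frac tu)^{2\gamma}\,du$. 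The key simplification is a logarithmic change of variables turning both into integrals over a fixed domain with exponential weight: setting $w=\log(s/u)$ in $I_1$ and $w=\log(t/u)$ in $I_2$, and abbreviating $L:=\log(t/s)$, one gets $I_1=s\int_0^\infty[(L+w)^\gamma-w^\gamma]^2e^{-w}\,dw$ and $I_2=t\int_0^L w^{2\gamma}e^{-w}\,dw$. Since $t-s=t(1-e^{-L})=s(e^L-1)$, every bound I need can be phrased as boundedness of a single explicit function of the scalar variable $L\in(0,\infty)$, which I will establish by checking continuity on $(0,\infty)$ together with the two endpoint limits $L\to0^+$ and $L\to\infty$.

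For $I_2$ this is straightforward. In the case $\alpha\in(1,2)$ I consider $g(L):=\bigl(\int_0^Lw^{\alpha-1}e^{-w}dw\bigr)/(1-e^{-L})$, so that $I_2=g(L)(t-s)$; as $L\to0^+$ the numerator is $\sim L^\alpha/\alpha$ and the denominator $\sim L$, giving $g(L)\to0$ (here $\alpha>1$ is used), while $g(L)\to\Gamma(\alpha)$ as $L\to\infty$, so $g$ is bounded by a constant depending only on $\alpha$. In the case $\alpha\in(0,1)$ I instead write $I_2/(t-s)^\alpha=t^{1-\alpha}\phi(L)$ with $\phi(L):=\bigl(\int_0^Lw^{\alpha-1}e^{-w}dw\bigr)/(1-e^{-L})^\alpha$; now $\phi(L)\to1/\alpha$ as $L\to0^+$ and $\phi(L)\to\Gamma(\alpha)$ as $L\to\infty$, and the factor $t^{1-\alpha}\le T^{1-\alpha}$ supplies exactly the $T$-dependence announced in the statement.

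The genuine difficulty is $I_1$, because near $w=0$ the integrand $[(L+w)^\gamma-w^\gamma]^2$ is not controlled by the naive Lagrange estimate $\gamma^2w^{2\gamma-2}L^2$, whose $w$-integral diverges at the origin (here $2\gamma-2<-1$). I will therefore split $\int_0^\infty=\int_0^L+\int_L^\infty$ and use two different bounds. On $(0,L)$ I use crude pointwise bounds: the subadditivity inequality $x^\gamma-y^\gamma\le(x-y)^\gamma$ gives $(L+w)^\gamma-w^\gamma\le L^\gamma$ for $\gamma\in(0,1)$ (case $\alpha>1$), while $0\le w^\gamma-(L+w)^\gamma\le w^\gamma$ for $\gamma<0$ (case $\alpha<1$); in either case the $(0,L)$-contribution is at most a constant times $L^{2\gamma+1}=L^\alpha$. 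On $(L,\infty)$, where $w\ge L$, the mean value theorem gives $\vert(L+w)^\gamma-w^\gamma\vert\le\vert\gamma\vert w^{\gamma-1}L$, and since $2\gamma-2<-1$ the integral $\int_L^\infty w^{2\gamma-2}\,dw$ converges without help from $e^{-w}$ and again contributes a multiple of $L^\alpha$. Hence $\int_0^\infty[(L+w)^\gamma-w^\gamma]^2e^{-w}dw\le C_\alpha L^\alpha$ as $L\to0^+$; as $L\to\infty$ the same integral is $O(L^{2\gamma})=O(L^{\alpha-1})$ for $\alpha>1$ (from $[(L+w)^\gamma-w^\gamma]^2\le(L+w)^{2\gamma}\le L^{2\gamma}(1+w^{2\gamma})$ when $L\ge1$) and tends to $\Gamma(2\gamma+1)=\Gamma(\alpha)$ for $\alpha<1$. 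Writing $I_1=s\int_0^\infty[\cdots]e^{-w}dw$ with $s=te^{-L}$ and comparing with $(t-s)^\alpha=t^\alpha(1-e^{-L})^\alpha$ (case $\alpha<1$, extracting $t^{1-\alpha}\le T^{1-\alpha}$) or with $t-s=s(e^L-1)$ (case $\alpha>1$), the relevant ratio is once more a continuous function of $L$ with finite limits at both ends, hence bounded.

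Adding the two estimates gives $\mathbb{E}\vert B^\mathcal{H}_\alpha(t)-B^\mathcal{H}_\alpha(s)\vert^2\le C_{T,\alpha}(t-s)^\alpha$ for $\alpha\in(0,1)$ and $\le C_\alpha(t-s)$ for $\alpha\in(1,2)$, the constant being independent of $T$ in the second case; an application of Lemma \ref{gik} with $\beta=\alpha/2$, respectively $\beta=1/2$, then yields membership in $\mathcal{C}^{\alpha/2-}([0,T])$ and $\mathcal{C}^{1/2-}([0,T])$. I expect the endpoint asymptotics of the $I_1$-integral as $L\to0^+$ — precisely the fact that both the $\int_0^L$ and $\int_L^\infty$ pieces produce the same borderline power $L^\alpha$ — to be the only delicate point; the rest is routine continuity-and-limits bookkeeping.
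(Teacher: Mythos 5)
Your proposal is correct, and its skeleton — It\^o isometry, the split $\int_0^s+\int_s^t$, and the logarithmic substitutions reducing everything to one-variable integrals in $L=\log(t/s)$ — is exactly the paper's. Where you genuinely diverge is in how the hard integral $I_1=s\int_0^\infty[(L+w)^\gamma-w^\gamma]^2e^{-w}\,dw$ is estimated. The paper splits at $w=1$ (after first rewriting the integrand, for $\alpha<1$, in the weighted form $[(L+z)^{(1-\alpha)/2}-z^{(1-\alpha)/2}]^2/\bigl(z^{1-\alpha}(L+z)^{1-\alpha}\bigr)$), performs a second substitution $u=L/z$ to isolate a convergent constant $I_\alpha$, and then converts $s\,L^\alpha$ into $(t-s)^\alpha$ or $(t-s)$ via the Lagrange theorem and a separate auxiliary inequality $s(\log(t/s))^p\le L_p(t-s)$ (Lemma \ref{implem} in the Appendix). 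You instead split at $w=L$, which is precisely adapted to the borderline singularity at the origin, and use only subadditivity of $x\mapsto x^\gamma$ on $(0,L)$ and the mean value theorem on $(L,\infty)$; the passage to $(t-s)^\alpha$ or $(t-s)$ is then absorbed into the boundedness of an explicit continuous function of $L$ with finite limits at $0^+$ and $\infty$. Your route is more elementary and self-contained (no auxiliary lemma, no secondary change of variables), and it makes transparent why both pieces of $I_1$ produce the same power $L^\alpha$; the paper's route has the advantage of producing named constants ($I_\alpha$, $J_\alpha$, $K_\alpha$, $L_\alpha$) that are reused later (e.g.\ in the local nondeterminism proof and in the $\alpha\to0$ analysis), and of treating $s=0$ explicitly, which in your framework is handled only implicitly through the $L\to\infty$ limits. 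Both arguments are sound and yield the same constants' dependence on $T$ and $\alpha$.
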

\begin{proof} Let $T>0$ be fixed and $0\le s \le t\le T$. Consider the case $s=0$. Then $$ \mathbb{E}\vert B^\mathcal{H}_\alpha(t)-B^\mathcal{H}_\alpha(s)\vert^2=t,$$ whence the necessary upper bounds hold with $T^{1-\alpha}$ and 1, respectively. Therefore, in what follows, we assume that $s>0$.
Applying representation  \eqref{mvn}, we can write that
\
\begin{eqnarray}
 \mathbb{E}\vert B^\mathcal{H}_\alpha(t)-B^\mathcal{H}_\alpha(s)\vert^2  
 &=&\frac{1}{\Gamma(\alpha)}\int _{0}^{s} \left[\left( \log \frac{t}{u}\right)^{(\alpha-1)/2} -\left( \log \frac{s}{u}\right)^{(\alpha-1)/2}\right]^2du\label{hol4}\\&+&\frac{1}{\Gamma(\alpha)}\int _{s}^{t} \left( \log \frac{t}{u}\right)^{\alpha-1}du\notag 
  =:  \frac{1}{\Gamma(\alpha)}[J^{(1)}_{s,t}+J^{(2)}_{s,t}]. \notag
\end{eqnarray}
 In the case when $\alpha \in (0,1)$, it is reasonable to rewrite $ J^{(1)}_{s,t} $ in the form 
 \begin{eqnarray}\label{2.5}
J^{(1)}_{s,t}=   s\int _{0}^{+\infty}\frac{e^{-z}\left[\left( \log \frac{t}{s}+z\right)^{(1-\alpha)/2} -z^{(1-\alpha)/2}\right]^2}{z^{1-\alpha}\left(\log \frac{t}{s}+z\right)^{1-\alpha}}dz=:I^{(1)}_{s,t}+I^{(2)}_{s,t},  \notag \\
\end{eqnarray}
where, in the right-hand side, we denote  $I^{(1)}_{s,t}:=s\int _{0}^{1}(\cdot)$ and $I^{(2)}_{s,t}:=s\int _{1}^{+\infty}(\cdot)$.

Now let us  distinguish two cases.
\begin{enumerate}[label=(\roman*)]
    \item Let $\alpha \in (0,1)$. Then, since $e^{-z}\le 1,$ if $ z>0$, by the change of variable $\log \frac{t}{s}/z=u$, we get that 
    \begin{equation}
    I^{(1)}_{s,t} \leq s\int _{0}^{1}\frac{\left[\left( \log \frac{t}{s}+z\right)^{(1-\alpha)/2} -z^{(1-\alpha)/2}\right]^2}{z^{1-\alpha}\left(\log \frac{t}{s}+z\right)^{1-\alpha}}dz=s\left( \log \frac{t}{s}\right)^\alpha \int_{\log(t/s)}^\infty \frac{\left[(u+1)^{(1-\alpha)/2}-1 \right]^2 }{u^{1+\alpha}(u+1)^{1-\alpha}}du. \label{hol3}
    \end{equation}
Note that, at infinity, the integrand in the latter integral behaves as $u^{-\alpha-1}$. Moreover, at zero, the behavior of the integrand can be described as follows: consider the following expansion (around $u=0$), for $\eta \in (0,u)$,
\[
(u+1)^{(1-\alpha)/2}-1=\frac{1-\alpha}{2}u- \frac{1-\alpha^2}{8}(1+\eta)^{-(3+\alpha)/2}u^2 \leq \frac{1-\alpha}{2}u+ \frac{1-\alpha^2}{8}u^2 \leq (1-\alpha)u, 
\]
if $u< 4/(1+\alpha)$. Thus, in the neighborhood of zero, the integrand in \eqref{hol3} can be bounded as
\begin{equation}\label{upbound}
\frac{\left[(u+1)^{(1-\alpha)/2}-1 \right]^2 }{u^{1+\alpha}(u+1)^{1-\alpha}} \leq \frac{(1-\alpha)^2 u^{1-\alpha}}{(u+1)^{1-\alpha}} \leq (1-\alpha)^2,
\end{equation}
so that the integral   \begin{equation}\label{firstI}
I_\alpha:=\int_{0}^\infty \frac{\left[(u+1)^{(1-\alpha)/2}-1 \right]^2 }{u^{1+\alpha}(u+1)^{1-\alpha}}du\end{equation} is correctly defined. Therefore, we can write that
\begin{equation}\label{needlater} I^{(1)}_{s,t}  \leq I_\alpha s \left( \log \frac{t}{s}\right)^\alpha \leq \frac{I_\alpha s}{\theta^\alpha}(t-s)^\alpha,
\end{equation}
for $\theta \in (s,t)$, where, in the last step, we have applied the Lagrange theorem. Noticing that $\frac{s}{\theta^\alpha} \leq s^{1-\alpha} \leq T^{1-\alpha}$, we  obtain the following inequality:
        \begin{equation}
         I^{(1)}_{s,t} \leq I_\alpha T^{1-\alpha}(t-s)^\alpha. \label{hol2}   
        \end{equation}

Now we can bound $I^{(2)}_{s,t}$ in \eqref{2.5}, recalling  that $\log (t/s)+z >z$, if $t>s$ (for the denominator), as follows
\begin{eqnarray}\label{first}
I^{(2)}_{s,t} &\leq& s\int _{1}^{+\infty}z^{2 \alpha-2}e^{-z}\left[\left( \log \frac{t}{s}+z\right)^{(1-\alpha)/2} -z^{(1-\alpha)/2}\right]^2dz \notag \\
&\leq& s \frac{(1-\alpha)^2}{4} \left(\log \frac{t}{s}\right)^2\int _{1}^{+\infty}z^{\alpha-3}e^{-z}dz=:J_\alpha \frac{(1-\alpha)^2}{4} s  \left( \log \frac{t}{s}\right)^2,
\end{eqnarray}
where $J_\alpha=\int _{1}^{+\infty}z^{\alpha-3}e^{-z}dz$, and for the second inequality we used the upper bound 
\[
\left( \log \frac{t}{s}+z\right)^{(1-\alpha)/2} -z^{(1-\alpha)/2}=\frac{1-\alpha}{2}\theta^{-(1+\alpha)/2}\log \frac{t}{s} \leq \frac{1-\alpha}{2}z^{-(1+\alpha)/2}\log \frac{t}{s},
\]
for $\theta \in \left(z,z+\log (t/s)\right)$. Now we can apply to the right-hand side of \eqref{first} the inequality \eqref{auxil-e-ineq}, with $p=2$ and $L_2=1$,   and we conclude that $$I^{(2)}_{s,t}\le  J_\alpha \frac{(1-\alpha)^2}{4}  (t-s) \leq J_\alpha \frac{(1-\alpha)^2}{4}(t-s)^\alpha T^{1-\alpha}.$$

For the last term in \eqref{hol4}, on the one hand,  we have   that  for $s<t/2$ (in this case $t<2(t-s)$), 
\begin{equation}J^{(2)}_{s,t}=t
\int _{0}^{\log(t/s)}e^{-z} z^{\alpha-1}dz \leq t
\int _{0}^{\infty}e^{-z} z^{\alpha-1}dz \leq 2\Gamma(\alpha) (t-s)\leq 2\Gamma(\alpha)T^{1-\alpha} (t-s)^\alpha . \label{hol5}
\end{equation}
 On the other hand, for $s \geq t/2$ and $\theta \in (s,t),$ we have that
 \begin{equation}\begin{gathered}
 J^{(2)}_{s,t}\leq t
\int _{0}^{\log(t/s)} z^{\alpha-1}dz =\frac{t}{\alpha}(\log t- \log s)^\alpha \leq \frac{t}{\alpha \theta^\alpha}(t-s)^\alpha\leq \frac{t}{\alpha (t/2)^\alpha}(t-s)^\alpha\\ \leq \frac{2^\alpha T^{1-\alpha}}{\alpha}(t-s)^\alpha. \label{hol7}
 \end{gathered}\end{equation}
 Combining the bounds from  \eqref{hol4}-\eqref{hol7}, we receive  that
\begin{equation}\label{const}\mathbb{E}\vert B^\mathcal{H}_\alpha(t)-B^\mathcal{H}_\alpha(s)\vert^2\leq C_{T,\alpha}(t-s)^\alpha,\end{equation}
where
\begin{equation*}
  C_{T,\alpha}= (\Gamma(\alpha))^{-1}\max\left\{I_\alpha T^{1-\alpha},   {J_\alpha \frac{(1-\alpha)^2}{4}T^{1-\alpha}},  2\Gamma(\alpha)T^{1-\alpha}, \frac{2^\alpha T^{1-\alpha}}{\alpha}\right\}.\end{equation*} Now,   $(i) $ follows from  Lemma \ref{gik}.
 \item Let $\alpha \in (1,2)$. Then changing the variable $z=\log \frac{t}{s}/u$,  we can rewrite $J^{(1)}_{s,t}$   as
 \begin{eqnarray}
J^{(1)}_{s,t}&=&s\int _{0}^{+\infty}e^{-z}\left[\left( \log \frac{t}{s}+z\right)^{(\alpha-1)/2} -z^{(\alpha-1)/2}\right]^2dz \label{qv3} \\
&=&s\left(\log \frac{t}{s}  \right)^\alpha \int _{0}^{+\infty}u^{-\alpha-1} (t/s)^{-1/u}\left[(u+1)^{(\alpha-1)/2}-1\right]^2du \notag \\
&\leq&s\left(\log \frac{t}{s}  \right)^\alpha \int _{0}^{+\infty}u^{-\alpha-1}\left[(u+1)^{(\alpha-1)/2}-1\right]^2du =:K_\alpha s\left(\log \frac{t}{s}  \right)^\alpha. \notag
 \end{eqnarray}
 Indeed, the last integral converges since the integrand behaves as $u^{1-\alpha}$  at zero, and as $u^{-2}$ at infinity. Now we can apply inequality \eqref{auxil-e-ineq} with $p=\alpha\in(1,2)$, and conclude that  
$$ s\left(\log \frac{t}{s}  \right)^\alpha\le L_\alpha(t-s).$$ Therefore we have that $ J^{(1)}_{s,t} \leq K_\alpha   L_\alpha (t-s) .$

The last term in \eqref{hol4} can be bounded for $s<t/2,$ as in the middle part of \eqref{hol5}, so that we get $J^{(2)}_{s,t}\leq 2\Gamma(\alpha)(t-s)$.

 If $s \geq t/2$,  we can start with inequality \begin{equation}\label{dji2} 
      J^{(2)}_{s,t}\leq t
\int _{0}^{\log(t/s)} z^{\alpha-1}dz =\frac{t}{\alpha}\left(\log\frac{t}{s}\right)^\alpha\le \frac{2s}{\alpha}\left(\log\frac{t}{s}\right)^\alpha  \end{equation} as in  \eqref {hol7}, apply \eqref{auxil-e-ineq} in Appendix, with $p=\alpha $,  and    conclude that $J^{(2)}_{s,t}\leq \frac{2L_\alpha}{\alpha} (t-s).$

Combining  together the bounds obtained so far, we get  that
 \[\mathbb{E}\vert B^\mathcal{H}_\alpha(t)-B^\mathcal{H}_\alpha(s)\vert^2\leq C_\alpha(t-s),
 \]
 with $$C_\alpha= (\Gamma(\alpha))^{-1} \max\left\{ K_\alpha L_\alpha,   2\Gamma(\alpha),\frac{2L_\alpha}{\alpha}\right\}$$ so that $(ii) $ follows, by applying again Lemma \ref{gik}.

\end{enumerate}

\end{proof}
\subsection{Smoothness of trajectories: lower  bounds}
In the next result we prove optimality of  the upper bounds given in Theorem \ref{thm1}. We shall denote by $C$ various constants whose value is irrelevant. 

\begin{theorem}\label{thm2}
    The  upper bounds in \eqref{eq:upperh1} and \eqref{eq:upperh2}  can not be replaced by any other with higher index.
\end{theorem}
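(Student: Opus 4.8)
The plan is to complement the upper estimates of Theorem \ref{thm1} with matching lower bounds for the incremental variance, along suitably chosen sequences of pairs $(s,t)$ with $t-s\to 0$. Since, by \eqref{hol4},
\[
\mathbb{E}\vert B^\mathcal{H}_\alpha(t)-B^\mathcal{H}_\alpha(s)\vert^2=\frac{1}{\Gamma(\alpha)}\bigl[J^{(1)}_{s,t}+J^{(2)}_{s,t}\bigr]\ge \frac{1}{\Gamma(\alpha)}\max\bigl\{J^{(1)}_{s,t},J^{(2)}_{s,t}\bigr\},
\]
it suffices to bound from below just one of the two nonnegative terms. The first point to stress is that the sharp exponent is attained in different regimes according to the value of $\alpha$: for $\alpha\in(0,1)$ the bound \eqref{eq:upperh1} is already sharp at any fixed interior point, whereas for $\alpha\in(1,2)$ the bound \eqref{eq:upperh2} is \emph{not} attained at interior points (there the variance turns out to be of the strictly smaller order $(t-s)^\alpha=o(t-s)$) and its sharpness is a genuinely near-zero phenomenon, in accordance with the ``increasing smoothness with departure from zero'' discussed in the Introduction.

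For $\alpha\in(0,1)$ I would fix an arbitrary $s_0>0$ and let $t\downarrow s_0$, working only with $J^{(2)}_{s_0,t}=t\int_0^{\log(t/s_0)}e^{-z}z^{\alpha-1}\,dz$. Restricting to $t\in(s_0,2s_0]$ keeps the range of integration bounded (so that $e^{-z}\ge \tfrac12$ there), which gives $J^{(2)}_{s_0,t}\ge \frac{t}{2\alpha}\bigl(\log(t/s_0)\bigr)^\alpha$; combining this with the elementary inequality $\log(t/s_0)\ge (t-s_0)/(2s_0)$, valid for $t-s_0\le s_0$, yields $J^{(2)}_{s_0,t}\ge c\,(t-s_0)^\alpha$ for a constant $c=c(\alpha,s_0)>0$. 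Hence $\mathbb{E}\vert B^\mathcal{H}_\alpha(t)-B^\mathcal{H}_\alpha(s_0)\vert^2\ge c\,(t-s_0)^\alpha/\Gamma(\alpha)$, so that dividing by $(t-s_0)^\beta$ with any $\beta>\alpha$ and letting $t\downarrow s_0$ forces the quotient to blow up; therefore \eqref{eq:upperh1} cannot hold with an exponent larger than $\alpha$.

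For $\alpha\in(1,2)$ I would instead exploit $J^{(1)}_{s,t}$ along the ``zooming'' sequence $t=\lambda s$ with a fixed ratio $\lambda>1$ and $s\to 0^+$. Using representation \eqref{qv3}, the inner integral becomes independent of $s$: $J^{(1)}_{s,\lambda s}=s\,N_{\alpha,\lambda}$, where $N_{\alpha,\lambda}:=\int_0^\infty e^{-z}\bigl[(\log\lambda+z)^{(\alpha-1)/2}-z^{(\alpha-1)/2}\bigr]^2\,dz$. This constant is finite (the integrand decays like $z^{\alpha-3}e^{-z}$ at infinity and is bounded near $0$) and strictly positive (since $\log\lambda>0$ makes the bracket positive for every $z>0$). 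As $t-s=(\lambda-1)s$, we obtain $J^{(1)}_{s,\lambda s}=\frac{N_{\alpha,\lambda}}{\lambda-1}(t-s)$, whence $\mathbb{E}\vert B^\mathcal{H}_\alpha(\lambda s)-B^\mathcal{H}_\alpha(s)\vert^2\ge \frac{N_{\alpha,\lambda}}{(\lambda-1)\Gamma(\alpha)}(t-s)$. Letting $s\to 0^+$ and arguing as before shows that \eqref{eq:upperh2} cannot hold with any exponent larger than $1$.

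The main obstacle, and the one requiring care, is precisely the choice of the sequence in the case $\alpha\in(1,2)$: a naive fixed interior point $s_0>0$ gives, via the substitution $z=\log(t/s_0)\,w$ in \eqref{qv3}, only $J^{(1)}_{s_0,t}\sim M_\alpha\,s_0\,(\log(t/s_0))^\alpha\asymp (t-s_0)^\alpha$, which is of strictly smaller order than $(t-s_0)$ and hence useless for proving sharpness. Recognizing that the order $(t-s)$ is recovered only by letting $s\to 0$ along a fixed ratio is the key idea. The remaining points are routine: the finiteness and positivity of the constants (compare $I_\alpha$ in \eqref{firstI} and $K_\alpha$, already essentially verified in the proof of Theorem \ref{thm1}), together with the elementary logarithmic inequalities used to pass from $\log(t/s)$ to $t-s$. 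Finally, these matching lower bounds may be upgraded, by a standard argument for Gaussian processes, to the statement that the Hölder exponents $\alpha/2$ and $1/2$ of Theorem \ref{thm1} cannot be increased.
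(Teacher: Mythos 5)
Your proposal is correct and follows essentially the same route as the paper: matching lower bounds for the incremental variance along sequences with $t-s\to 0$, namely $t\downarrow s_0$ at a fixed interior point for $\alpha\in(0,1)$ and the fixed-ratio scaling $t=\lambda s$, $s\to 0^+$ for $\alpha\in(1,2)$ (the paper takes $\lambda=2$, i.e. $s=1/n$, $t=2/n$). The only cosmetic difference is that for $\alpha\in(0,1)$ you lower-bound the term $J^{(2)}_{s,t}$ instead of $J^{(1)}_{s,t}$, which is slightly more elementary and equally valid; your observation that for $\alpha\in(1,2)$ the linear order is a near-zero phenomenon, unattainable at interior points, is also consistent with Remark \ref{remrem}.
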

\begin{proof}
\begin{enumerate}[label=(\roman*)]
    \item Let $\alpha \in (0,1)$ and let $s=t(1-1/n),$ for $n \geq 1$, then we can write that
    \begin{eqnarray}
    J^{(1)}_{s,t}  &\geq& s\left( \log \frac{t}{s}\right)^\alpha e^{-1}\int_{\log(t/s)}^\infty \frac{\left[(u+1)^{(1-\alpha)/2}-1 \right]^2 }{u^{1+\alpha}(u+1)^{1-\alpha}}du \label{hol8} \\
    &=& t \left(1-\frac{1}{n}\right)\left( \log \left(1+\frac{1}{n-1}\right)\right)^\alpha e^{-1}\int_{\log(1+1/(n-1))}^\infty \frac{\left[(u+1)^{(1-\alpha)/2}-1 \right]^2 }{u^{1+\alpha}(u+1)^{1-\alpha}}du\notag \\
    &\sim& C t \left(\frac{1}{n-1}\right)^\alpha \sim C t \left(\frac{1}{n}\right)^\alpha = C t^{1-\alpha} (t-s)^\alpha, \notag
    \end{eqnarray}
 as $n \to \infty$.  

  \item Let $\alpha \in (1,2)$ and take $s=1/n$ and $t=2/n$, then we have that
  \begin{eqnarray}
   J^{(1)}_{s,t} &=&    s\left(\log \frac{t}{s}  \right)^\alpha \int _{0}^{+\infty}u^{-\alpha-1}(t/s)^{-1/u}\left[(u+1)^{(\alpha-1)/2}-1\right]^2du \notag \\
   &= &\frac{1}{n}\left(\log 2\right)^\alpha \int _{0}^{+\infty}u^{-\alpha-1}2^{-1/u}\left[(u+1)^{(\alpha-1)/2}-1\right]^2du \sim \frac{C}{n}  =C(t-s),
  \end{eqnarray}
  as $n \to +\infty$.

    \end{enumerate}
\end{proof}
\begin{rem}\label{rem2.1} Let us investigate what happens if $\alpha\in[2,3)$. In this case $\frac{\alpha-1}{2}\in [1/2,1)$, and so it is possible to  apply inequality \eqref{auxil-e-ineq}  with $p=\alpha-1\in[1,2)$, and to write down
\begin{align*}J^{(1)}_{s,t} &=\lim_{\varepsilon\downarrow 0}\int _{\varepsilon}^{s} \left[\left( \log \frac{t}{u}\right)^{(\alpha-1)/2} -\left( \log \frac{s}{u}\right)^{(\alpha-1)/2}\right]^2du\\
&\leq\lim_{\varepsilon\downarrow 0} \int _{\varepsilon}^{s} \left[\left( \log \frac{t}{u}\right) -\left( \log \frac{s}{u}\right)\right]^{\alpha-1}du\\
&=s \left( \log \frac{t}{s}\right)^{\alpha-1} \leq L_{\alpha-1}(t-s),
\end{align*}
and the upper bounds for $J^{(2)}_{s,t}$  are preserved as in the proof of item (ii) in Theorem \ref{thm1}. Therefore, statement (ii) of  Theorem \ref{thm1} holds for all $\alpha\in(1,3)$. Moreover, we can put, as in Theorem \ref{thm2}, $s=1/n,\, t=2/n$ and get  from \eqref{hol5} that $$J^{(2)}_{1/n,2/n}=\frac2n
\int _{0}^{\log2}e^{-z} z^{\alpha-1}dz \sim \frac{C}{n}  =C(t-s),$$
   as $n \to +\infty$. Therefore both Theorems \ref{thm1} and \ref{thm2} are valid for $\alpha\in(1,3)$. 
\end{rem}

\begin{rem}\label{remrem} It is interesting to compare the results of Theorems \ref{thm1} and \ref{thm2}, that are obtained for the intervals $[0,T]$ starting from zero, with the situation on the intervals $[\delta, T]$ for $0<\delta<T\le\infty. $ For $\alpha\in(0,1)$ the obtained upper bounds are optimal even if we deviate from zero. However, if $\alpha\in(1,2)$,  it immediately follows from  \eqref{qv3} combined with   \eqref{dji2} (the latter one can be now applied to all $0<s<t\le T$), that  \[\mathbb{E}\vert B^\mathcal{H}_\alpha(t)-B^\mathcal{H}_\alpha(s)\vert^2\leq C_\alpha(t-s)^\alpha,
 \] and therefore $B^\mathcal{H}_\alpha$ belongs
  to $\mathcal{C }^{\frac{\alpha}{2}-}([\delta,T])$, for any $0<\delta<T $, where   H\"{o}lder exponent $\frac{\alpha  }{2}$ is bigger  than $\frac12.$  Let    $\alpha\in(2,3)$. Then, according to  \eqref{qv3}, $$J^{(1)}_{s,t}=s\int _{0}^{+\infty}e^{-z}\left[\left( \log \frac{t}{s}+z\right)^{(\alpha-1)/2} -z^{(\alpha-1)/2}\right]^2dz,$$ and the function $r(z)=\left( \log \frac{t}{s}+z\right)^{(\alpha-1)/2} -z^{(\alpha-1)/2}$ decreases in $z\ge 0$. Therefore 
 $$J^{(1)}_{s,t}\le s \left( \log \frac{t}{s}\right)^{ \alpha-1 }\le \delta^{1-\alpha}(t-s)^{\alpha-1}.$$  This means that   $B^\mathcal{H}_\alpha$ belongs
  to $\mathcal{C }^{\frac{\alpha -1}{2}-}([\delta,T])$, for any $0<\delta<T\le\infty, $ and H\"{o}lder exponent $\frac{\alpha -1}{2}$ is bigger  than $\frac12.$ Thus, we observe an interesting effect of ``increasing smoothness with departure from zero '',  which is not inherent, for example, to fractional Brownian motion, but is observed   for some Gaussian-Volterra processes (see, e.g., the related upper bounds in \cite{MiShSh}).
\end{rem}
\begin{rem}\label{rem2.3} We analyze now the situation where $\alpha\ge 3$. In this case, on any  interval $[0,T]$,  according to \eqref{qv3}, and with the help of Lagrange theorem, Lemma \ref{implem} and inequality $(a+b)^p\le 2^p(a^p+b^p), a,b, p>0$  we get that\begin{eqnarray}\label{bigalpha}
J^{(1)}_{s,t}&=&s\int _{0}^{+\infty}e^{-z}\left[\left( \log \frac{t}{s}+z\right)^{(\alpha-1)/2} -z^{(\alpha-1)/2}\right]^2dz \notag  \\
&\le& \frac{(\alpha-1)^2}{4}s\int _{0}^{+\infty}e^{-z}\left( \log \frac{t}{s}+z\right)^{\alpha-3 }\left( \log \frac{t}{s}\right)^{2}dz\notag  \\
&\le& 2^{\alpha-5}(\alpha-1)^2\left(s\left( \log \frac{t}{s}\right)^{\alpha-1 }+\Gamma(\alpha-2)s\left( \log \frac{t}{s}\right)^{2 }\right)\le C(t-s),
\end{eqnarray}
and the upper bound for $J_{s,t}^2$ follows as in Theorem \ref{thm1}. The lower bound can be obtained, from the first equality in \eqref{bigalpha}, as in Theorem \ref{thm2}; whence we get that the process $B^\mathcal{H}_\alpha$ is H\"{o}lder up to order $1/2$ on any interval $[0,T]$. Again, if we consider, as in Remark \ref{remrem}, any interval $[\delta, T]$ separated from zero, then $$s\left( \log \frac{t}{s}\right)^{p}\le C(t-s)^p,\, p\ge 1,$$ and the inequality \eqref{bigalpha} can be transformed into $$J^{(1)}_{s,t}\le C(t-s)^2,$$ by taking into account that $\alpha-1\ge 2$. Since $$J^{(2)}_{s,t}=\frac{1}{\Gamma(\alpha)}\int _{s}^{t} \left( \log \frac{t}{u}\right)^{\alpha-1}du\le \frac{1}{\Gamma(\alpha)}(t-s)\left( \log \frac{t}{s}\right)^{\alpha-1}\le C(t-s)^{\alpha},$$ we can say that $B^\mathcal{H}_\alpha\in \mathcal{C}^{1-}([\delta,T])$, for any $\alpha\ge 3$ and $\delta>0$. So, the effect of  ``increasing smoothness with departure from zero'' is appearing again, but, in this case, the trajectories are ``almost Lipschitz'' at any $([\delta,T])$. In this sense, such smoothness  characterizes the process $B^\mathcal{H}_\alpha$ with $\alpha\ge 3$ as not very interesting from the point of view of applications. 
\end{rem}
\begin{rem}\label{rem2.2} Now, a few words  about the modulus of continuity: in order to apply Lemma \ref{lif}, we rewrite the modulus of continuity in \eqref{mod1} as follows
    \begin{equation}\notag
    \Theta(\varepsilon)=\int_0^\varepsilon \vert \log r \vert^{1/2} r^{\beta -1} dr=\beta^{-3/2}\int_{-\log \varepsilon^\beta}^\infty x^{1/2} e^{-x} dx \sim\sqrt{\log(1/\varepsilon)}e^{\log \varepsilon^\beta} = \varepsilon^\beta \sqrt{\log(1/\varepsilon)},
    \end{equation}
    where the approximation holds as $\varepsilon\downarrow 0$. 
    Thus, it immediately follows from Theorems \ref{thm1} and \ref{thm2} that a uniform modulus of continuity of $B^\mathcal{H}_\alpha$ on $[0,T]$ is given, for $\alpha \in (0,1)$, by $\Theta(\varepsilon) \sim \varepsilon^{\alpha/2} \sqrt{\log(1/\varepsilon)}$, whence, for $\alpha \in (1,2)$, it is $\Theta(\varepsilon) \sim \varepsilon^{\frac{1}{2}-\rho} \sqrt{\log(1/\varepsilon)}$, for any $0<\rho<1/2.$
\end{rem}
 \subsection{Memory properties of the $\mathcal{H}$-fBm} There can be different approaches to the definition of short and long memory of the process with non-stationary increments. One of these definitions was considered in \cite{BEG}, where it was proved that for $\alpha\in(1,2)$ $\mathcal{H}$-fBm has a long memory while for $\alpha\in(0,1)$ $\mathcal{H}$-fBm is anti persistent, and anti persistence and short memory exclude each other. However, let us consider here the definition of short/long memory in the same terms that are used for fBm. For example, let us  prove that the $\mathcal{H}$-fBm is short-range dependent for $\alpha \in (0,1)$:
to this aim, it is sufficient to check that the series $\sum_{n=1}^\infty \left\vert \mathbb{E}B_\alpha^\mathcal{H}(1)(B_\alpha^\mathcal{H}(n)-B_\alpha^\mathcal{H}(n-1))\right\vert=-\sum_{n=1}^\infty   \mathbb{E}B_\alpha^\mathcal{H}(1)(B_\alpha^\mathcal{H}(n)-B_\alpha^\mathcal{H}(n-1)) $ converges. Indeed, to check   the sign and to bound the value, note that
\begin{eqnarray}\left\vert \mathbb{E}B_\alpha^\mathcal{H}(1)(B_\alpha^\mathcal{H}(n)-B_\alpha^\mathcal{H}(n-1))\right\vert &=&  \int_0^1 \left(\log\frac1z\right)^{(\alpha-1)/2} \left[ \left(\log\frac{n-1}{z}\right)^{(\alpha-1)/2}-\left(\log\frac{n}{z}\right)^{(\alpha-1)/2}\right]dz \notag \\
&=&  \int_1  ^\infty\frac{(\log u)^{(\alpha-1)/2}}{u^2} \left[ \left(\log (n-1)u\right)^{(\alpha-1)/2}-\left(\log nu\right)^{(\alpha-1)/2}\right]du. \notag
\end{eqnarray}
Consider  $\rho_n:=\left(\log (n-1)u\right)^{(\alpha-1)/2}-\left(\log nu\right)^{(\alpha-1)/2}=\frac{1-\alpha}{2\Theta}(\log \Theta)^{(\alpha-3)/2}u$, for $\Theta \in [(n-1)u,nu],$ for any $u \geq 1$ and $n \in \mathbb{N}.$ Then for $n \geq 3$,
\begin{equation}
\rho_n\leq \frac{1-\alpha}{2(n-1)}(\log nu)^{(\alpha-3)/2}\leq \frac{1-\alpha}{2}\frac{(\log n)^{(\alpha-3)/2}}{n-1}.
\end{equation}
Moreover, $\int_1^\infty u^{-2}(\log u)^{(\alpha-1)/2}du=\int_0^\infty e^{-z}z^{(\alpha-1)/2}dz <\infty,$ and  $\sum_{n=3}^\infty \frac{1}{n-1}(\log n)^{(\alpha-3)/2}$ converges since $(\alpha-3)/2 <-1$. Very similar calculations are needed to prove that for $\alpha>1$ the series $\sum_{n=1}^\infty \left\vert \mathbb{E}B_\alpha^\mathcal{H}(1)(B_\alpha^\mathcal{H}(n)-B_\alpha^\mathcal{H}(n-1))\right\vert=\sum_{n=1}^\infty   \mathbb{E}B_\alpha^\mathcal{H}(1)(B_\alpha^\mathcal{H}(n)-B_\alpha^\mathcal{H}(n-1)) $ diverges.
 
\subsection{Some plots of trajectories}

We conclude this section with some plots where the trajectories of the $\mathcal{H}$-fBm are compared with those of the Brownian motion, for different values of $\alpha$. They confirm the results obtained so far on their smoothness. Recall that Brownian motion corresponds to $\alpha=1$. Therefore it is natural that on the plot (A)  $\mathcal{H}$-fBm with $\alpha=0.5$ is much more rough,  $\mathcal{H}$-fBm with $\alpha=1.5$ is more smooth but even more smooth when we   retreat from zero. Similar situation is on the plot (B), where we see additionally that, in accordance to Remark \ref{rem2.3}, the trajectories are ``almost Lipschitz'' farther from zero.

\begin{figure}[H]
 \subfloat
 [\centering $\mathcal{H}$-fBm with $\alpha=0.5$ (red), $\alpha=1.5$ (blue) vs Brownian motion (black)]
{{\includegraphics[width=7.5cm]{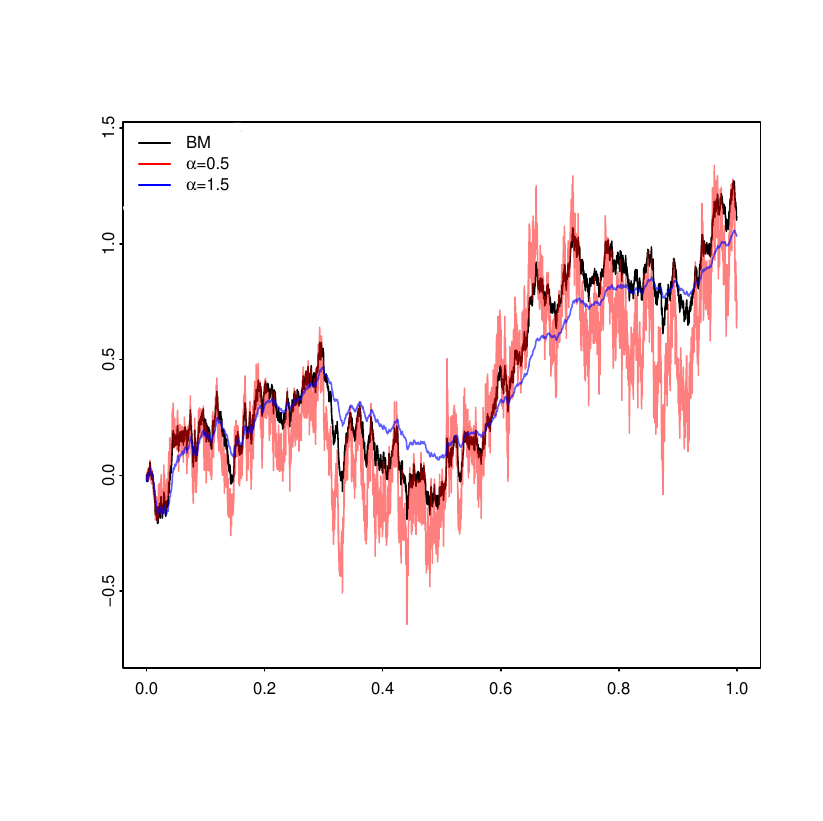}}}
    \qquad
    \subfloat
    [\centering $\mathcal{H}$-fBm with $\alpha=0.5$ (red), $\alpha=1.5$ (blue), $\alpha=3$ (green) vs Brownian motion (black)]
{{\includegraphics[width=7.5cm]
    {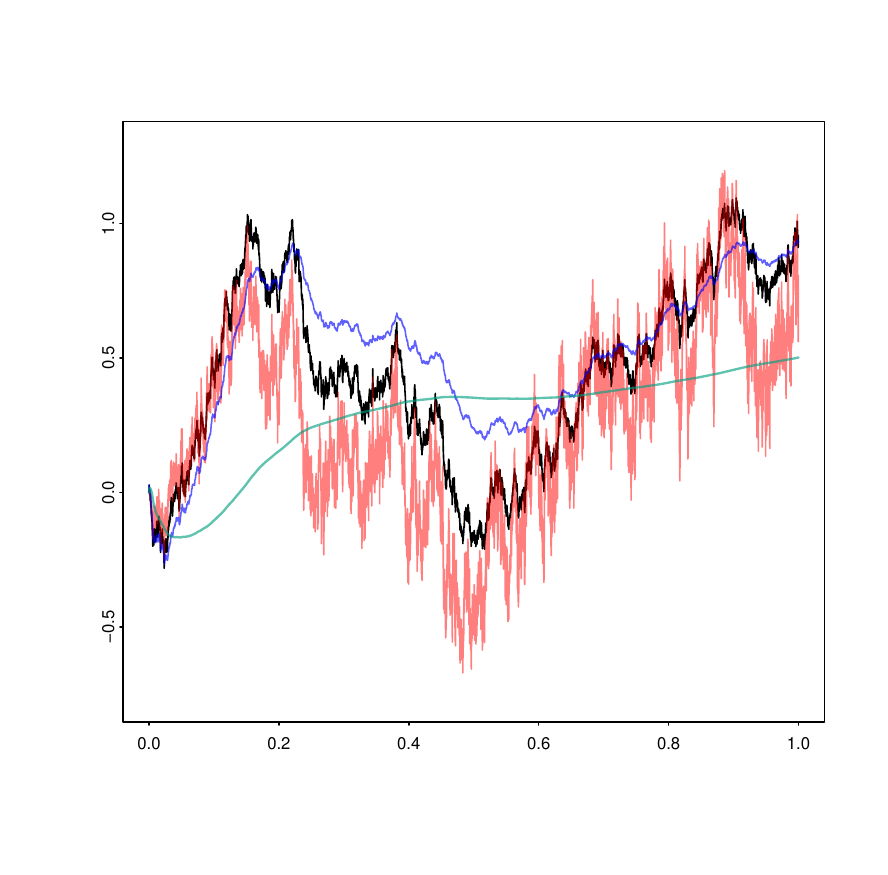}}}
    \label{fig:example}%
    \end{figure}

 \vspace{0.2cm} 

\section{$\mathcal{H}$-fBm as generalized quasi-helix}\label{quasi}
\subsection{Main properties} Let us investigate whether $B^\mathcal{H}_\alpha$ is a quasi-helix or a generalized quasi-helix, according to the classification of \cite{Kahane} and \cite{BoMiNoZh}, respectively. We prove that, due to the irregular behavior  of the incremental variance at zero, our process is not  a quasi-helix; however, it is a generalized quasi-helix. 

\begin{definition}(\cite{Kahane})
The stochastic process $X=\{X_t, t\in[0,T]\}$ is called  $(\gamma,T)$-quasi-helix if there exist two constants $C_1,C_2>0$ and index $\gamma>0$ such that, for any $s,t\in[0,T]$, 
$$C_1|t-s|^\gamma\le \mathbb{E}|X_t-X_s|^2\le C_2|t-s|^\gamma .$$
\end{definition}
\begin{definition}(\cite{BoMiNoZh}) The stochastic process $X=\{X_t, t\in[0,T]\}$ is called  $(\gamma_1,\gamma_2,T)$-generalized quasi-helix if there exist two constants $C_1,C_2>0$ and two (possibly, different) indices $\gamma_1, \gamma_2>0$ such that for any $s,t\in[0,T]$ $$C_1|t-s|^{\gamma_1}\le \mathbb{E}|X_t-X_s|^2\le C_2|t-s|^{\gamma_2} .$$
 \end{definition}   
Note that in this case $\gamma_1\ge \gamma_2.$
\begin{theorem} For any $T>0$, the stochastic process $B^\mathcal{H}_\alpha$ is   $(1, \alpha, T)$- generalized quasi-helix in the case $\alpha\in(0,1)$ and   $(\alpha,1,T)$- generalized quasi-helix in the case $\alpha\in(1,2)$.
    
\end{theorem}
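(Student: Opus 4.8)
The plan is to prove the two required inequalities separately, observing that the upper bounds are already available and only the lower bounds demand new work. Recall from the proof of Theorem~\ref{thm1} (see \eqref{hol4}) the decomposition $\mathbb{E}\vert B^\mathcal{H}_\alpha(t)-B^\mathcal{H}_\alpha(s)\vert^2=\frac{1}{\Gamma(\alpha)}\bigl[J^{(1)}_{s,t}+J^{(2)}_{s,t}\bigr]$, where $J^{(1)}_{s,t}\ge 0$ and $J^{(2)}_{s,t}=\int_s^t(\log\frac{t}{u})^{\alpha-1}du$. The upper estimates $C_2\vert t-s\vert^{\gamma_2}$ with $\gamma_2=\alpha$ for $\alpha\in(0,1)$ and $\gamma_2=1$ for $\alpha\in(1,2)$ are precisely \eqref{eq:upperh1} and \eqref{eq:upperh2}, so they may be quoted directly. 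It then remains to produce the matching lower bounds with exponent $\gamma_1=1$ (for $\alpha\in(0,1)$) and $\gamma_1=\alpha$ (for $\alpha\in(1,2)$).

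For the lower bounds I would discard the nonnegative term $J^{(1)}_{s,t}$ and estimate $J^{(2)}_{s,t}$ alone. Passing to the variable $z=\log(t/u)$ as in \eqref{hol5} gives $J^{(2)}_{s,t}=t\int_0^{\log(t/s)}z^{\alpha-1}e^{-z}\,dz$, and I would then split on the position of $s$. If $s\le t/2$, then $\log(t/s)\ge\log 2$ and hence $J^{(2)}_{s,t}\ge t\int_0^{\log 2}z^{\alpha-1}e^{-z}\,dz=:c_\alpha t\ge c_\alpha(t-s)$, a clean bound of order $(t-s)$. If $s>t/2$, I would use $e^{-z}\ge s/t$ throughout $[0,\log(t/s)]$ together with the Lagrange identity $\log(t/s)=(t-s)/\theta$, $\theta\in(s,t)$, to obtain $J^{(2)}_{s,t}\ge \frac{s}{\alpha t^\alpha}(t-s)^\alpha\ge \frac{t^{1-\alpha}}{2\alpha}(t-s)^\alpha$.

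Matching exponents then closes the argument. For $\alpha\in(1,2)$ one has $1-\alpha<0$, so $t^{1-\alpha}\ge T^{1-\alpha}$ and the regime $s>t/2$ already yields $J^{(2)}_{s,t}\ge C(t-s)^\alpha$, while the regime $s\le t/2$ dominates $(t-s)^\alpha$ because $t-s\le T$; this gives $\gamma_1=\alpha$. For $\alpha\in(0,1)$ I would instead trade one factor, writing $(t-s)^\alpha\ge(t/2)^{\alpha-1}(t-s)$ in the regime $s>t/2$ (valid since $t-s<t/2$ and $\alpha-1<0$), whereupon the powers of $t$ cancel and $J^{(2)}_{s,t}\ge \frac{2^{-\alpha}}{\alpha}(t-s)$, matching the order-$(t-s)$ bound from the other regime and giving $\gamma_1=1$. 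Dividing by $\Gamma(\alpha)$ supplies the constant $C_1$ in both cases. The main obstacle is securing the lower bound uniformly as $s\downarrow 0$, where the naive inequality $e^{-z}\ge s/t$ degenerates; the case split repairs this, the regime $s\le t/2$ reflecting that $\mathbb{E}\vert B^\mathcal{H}_\alpha(t)\vert^2=t$ behaves exactly like $t-s$ near the origin.
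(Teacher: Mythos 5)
Your proof is correct and follows essentially the same route as the paper: both discard the nonnegative term $J^{(1)}_{s,t}$, lower-bound $J^{(2)}_{s,t}=t\int_0^{\log(t/s)}z^{\alpha-1}e^{-z}\,dz$, and split according to whether $t/s$ is bounded away from $1$, handling the near-diagonal regime by an elementary estimate (your pointwise bound $e^{-z}\ge s/t$ plus the mean value theorem, versus the paper's derivative comparison at $r=0$). The exponent bookkeeping in both regimes checks out, so no gap remains.
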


\begin{proof}The respective upper bounds were already  established   in Theorem \ref{thm1}, and the impossibility of the increasing the degrees $\alpha$ and $1$, respectively, was confirmed in Theorem  \ref{thm2}.  Therefore,  our goal is to establish that, for any $T>0$, there exists  $C_1>0$ such that, for any $0\le s\le t\le T$,
$$\mathbb{E}\vert B^\mathcal{H}_\alpha(t)-B^\mathcal{H}_\alpha(s)\vert^2\ge C_1(t-s), $$ in the case  $\alpha\in(0,1)$, while 
 $$\mathbb{E}\vert B^\mathcal{H}_\alpha(t)-B^\mathcal{H}_\alpha(s)\vert^2\ge C_1(t-s)^\alpha,$$ in the case  $\alpha\in(1,2)$, and 
 that in both cases it is impossible to decrease the  powers in the right-hand side. 
 \begin{itemize}
     \item[$(i)$] Let $\alpha\in(0,1)$; it is enough to prove that  there exists a constant $C_1>0$ such that  \begin{equation}\label{firstineqq}J^{(2)}_{s,t}=t
\int _{0}^{\log(t/s)}e^{-z} z^{\alpha-1}dz\ge C_1(t-s).
\end{equation} If $t\ge e s$, then $\log(t/s)\ge 1$, and we can put $C_1=\int _{0}^{1}e^{-z} z^{\alpha-1}dz$. Let instead $t< e s$ and denote $r=\log(t/s)<1$; then \eqref{firstineqq} is equivalent to the inequality $$\int_0^re^{-z} z^{\alpha-1}dz\ge C_1(1-e^{-r}). $$ The latter inequality becomes equality if $r=0$, and the derivatives of the left- and right-hand side equal $e^{-r} r^{\alpha-1}$ and $C_1e^{-r}$, respectively. Therefore here it is enough to put $C_1=1$, and finally $C_1=\min\{1, \int _{0}^{1}e^{-z} z^{\alpha-1}dz\}. $ Furthermore, if $s=0$, then it follows from the formula \eqref{varvar} that the power 1 of $t-s$ in the inequality \eqref{firstineqq} can not be decreased. 
\item[$(ii)$] Let $\alpha\in(1,2)$;
it is enough to prove that  there exists a constant $C_1>0$ such  that  \begin{equation}\label{secondineqq}J^{(2)}_{s,t}=t
\int _{0}^{\log(t/s)}e^{-z} z^{\alpha-1}dz\ge C_1(t-s)^\alpha.
\end{equation}
Again, let  $t\ge e s$ and denote $r=\log(t/s)\ge 1.$ Then \eqref{secondineqq} is equivalent to $$\int _{0}^{r}e^{-z} z^{\alpha-1}dz\ge C_1(1-e^{-r})^\alpha t^{\alpha-1}.$$ Obviously, this inequality holds with $C_1=T^{1-\alpha}\int _{0}^{1}e^{-z} z^{\alpha-1}dz.$ Now, let $r<1$. Since $1-e^{-r}\le r$, it is enough to choose $C_1$ in such a way that the inequality $$\int _{0}^{r}e^{-z} z^{\alpha-1}dz\ge C_1 r^\alpha T^{\alpha-1} $$ holds. The latter inequality becomes equality if $r=0$, and the derivatives of the left- and right-hand side equal $e^{-r} r^{\alpha-1}$ and $  C_1 T^{\alpha -1}\alpha r^{\alpha-1}$, respectively. Therefore, it is sufficient to choose $C_1=\frac{T^{1-\alpha}}{\alpha e}$. Finally, inequality \eqref{secondineqq} holds for $C_1=T^{1-\alpha}\min\{\int _{0}^{1}e^{-z} z^{\alpha-1}dz, \frac{1}{\alpha e}\}$. The power $\alpha$ can not be decreased because for any $\beta<\alpha$
$$\lim_{r\to 0}\frac{\int _{0}^{r}e^{-z} z^{\alpha-1}dz}{(1-e^{-r})^\beta}=\lim_{r\to 0}\frac{ e^{-r} r^{\alpha-1}}{\beta r^{\beta-1}}=0. $$
 \end{itemize}
\end{proof}
\subsection{Behavior of $\mathcal{H}$-fBm in the ``boundary'' values of parameter $\alpha$} Let us  study the behavior of the process  $B^\mathcal{H}_\alpha$ as  function of the parameter $\alpha$, at the boundary points
$\alpha= 0, 1,2$. According to equality  \eqref{varvar}, for any $t\ge 0$, $\mathbb{E}(B^\mathcal{H}_\alpha(t))^2=t$ and does not depend on $\alpha$.  Note that the point $\alpha=2$ is not somehow critical or boundary for the process defined by \eqref{mvn} because this process  exists for any $\alpha>1 $ and, investigating the behavior of trajectories,  we did not observe something crucial at point $\alpha=2$. Therefore,  let us consider the behavior of trajectories of $B^\mathcal{H}_\alpha$ in the neighborhood of points $\alpha= 0, 1$. To compare, for fBm ``boundary'' points are $H=0$, where fBm transforms into the specific   white noise  $\frac{\xi_t-\xi_0}{\sqrt{2}}$, where $\xi_t, t\ge 0$ is  the family of independent  standard Gaussian  variables  (\cite{BoMiNoZh}), $H=1/2$, where it transforms into standard Brownian motion, and $H=1$, where it transforms into the linear process $\xi t$, where $\xi$ is a standard Gaussian random variable. First, Let us establish what is $\mathcal{H}$-fBm at zero. 
 \begin{theorem}\label{proposi3.1} As $\alpha\to 0$, finite-dimensional distributions of $B^\mathcal{H}_\alpha$ weakly converge to the finite dimensional distributions of the Gaussian normalized white noise $$B^\mathcal{H}_0(t)=t^{1/2}\xi_t,$$
 where $\xi_t$ is a standard white noise. 
  \end{theorem}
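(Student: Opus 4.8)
The plan is to use that $B^\mathcal{H}_\alpha$ is, for each $\alpha$, a centered Gaussian process, so that its finite-dimensional distributions are completely determined by the covariance function $\sigma^\alpha_{s,t}$ given in \eqref{sigma}. The limiting object $B^\mathcal{H}_0(t)=t^{1/2}\xi_t$ is likewise centered Gaussian, with covariance equal to $t$ when the two time arguments coincide and to $0$ otherwise, since $\xi_s,\xi_t$ are independent for $s\neq t$. Consequently, weak convergence of the finite-dimensional distributions reduces to the entrywise convergence of covariance matrices: it suffices to show that for every choice $0\le t_1<\dots<t_n$ the matrix $\left(\sigma^\alpha_{t_i,t_j}\right)_{i,j}$ converges, as $\alpha\to 0$, to $\operatorname{diag}(t_1,\dots,t_n)$, because convergence of the covariance matrices of centered Gaussian vectors forces weak convergence of the associated Gaussian laws.

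The diagonal requires no work: by \eqref{varvar} one has $\sigma^\alpha_{t,t}=t$ for every $\alpha$ and every $t$, so these entries already equal their targets. The whole content of the theorem therefore lies in proving that, for each fixed pair $0<s<t$, the off-diagonal covariance satisfies $\sigma^\alpha_{s,t}\to 0$ as $\alpha\to 0$. The case where some $t_i=0$ is trivial, since $B^\mathcal{H}_\alpha(0)=0$ almost surely, matching $0^{1/2}\xi_0=0$.

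For the off-diagonal entries I would argue from \eqref{sigma}, writing $\sigma^\alpha_{s,t}=C_\alpha\, s\,\Psi\!\left(\tfrac{1-\alpha}{2},1-\alpha;z\right)$ with $z:=\log(t/s)>0$ and $C_\alpha=2^{1-\alpha}\sqrt\pi/\Gamma(\alpha/2)$. Because $\Gamma$ has a simple pole at the origin, $\Gamma(\alpha/2)\to\infty$ and hence the prefactor vanishes, $C_\alpha\sim\sqrt\pi\,\alpha\to 0$. It then only remains to verify that the Tricomi factor stays bounded as $\alpha\to 0$. Using the integral representation \eqref{eqTricomiINtegral},
\[
\Psi\!\left(\tfrac{1-\alpha}{2},1-\alpha;z\right)=\frac{1}{\Gamma\!\left(\tfrac{1-\alpha}{2}\right)}\int_0^\infty e^{-uz}\,u^{-(1+\alpha)/2}\,(1+u)^{-(1+\alpha)/2}\,du,
\]
one sees that for $\alpha$ in a right-neighborhood of $0$ the integrand is dominated by the integrable function $u^{-1/2}(1+u)^{-1/2}e^{-uz}$, while $\Gamma((1-\alpha)/2)\to\Gamma(1/2)=\sqrt\pi$ stays bounded away from zero; by dominated convergence the factor converges to the finite limit $\Psi(1/2,1;z)$. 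Multiplying by $C_\alpha\to 0$ gives $\sigma^\alpha_{s,t}\to 0$, which completes the reduction. The only genuinely delicate step is this uniform-in-$\alpha$ domination of the Tricomi integral near $u=0$, where the exponent $-(1+\alpha)/2$ approaches $-1/2$; once the factor is known to remain bounded, the vanishing of $C_\alpha$ does the rest.
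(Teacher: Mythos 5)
Your overall strategy coincides with the paper's: reduce weak convergence of the finite-dimensional distributions of centered Gaussian vectors to entrywise convergence of the covariance matrices, observe that the diagonal entries equal $t$ for every $\alpha$, and show that each off-diagonal entry vanishes because a Gamma factor in the denominator blows up while the accompanying integral stays bounded by dominated convergence. The paper carries this out from the Volterra form \eqref{varGadamar}: after the substitution $u=\log(s/z)$ it writes $\sigma^\alpha_{t,s}=\frac{s}{\Gamma(\alpha)}\int_0^\infty u^{(\alpha-1)/2}\left(u+\log\frac ts\right)^{(\alpha-1)/2}e^{-u}\,du$, lets $\Gamma(\alpha)\to\infty$, and applies dominated convergence to the remaining integral. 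You perform the equivalent computation through the Tricomi representation \eqref{sigma} and \eqref{eqTricomiINtegral}, with $C_\alpha\sim\sqrt{\pi}\,\alpha\to 0$ playing the role of $1/\Gamma(\alpha)$; the two routes are the same argument in different coordinates.

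There is, however, a concrete error at precisely the step you single out as delicate. In your Tricomi integral the exponent of $u$ is $-(1+\alpha)/2$, which for $\alpha>0$ is strictly smaller than $-1/2$; hence on $(0,1)$ one has $u^{-(1+\alpha)/2}>u^{-1/2}$, so the function $u^{-1/2}(1+u)^{-1/2}e^{-uz}$ you propose does \emph{not} dominate the integrand near the origin, and the dominated convergence step as written fails. The repair is immediate: since only the limit $\alpha\to 0$ matters, restrict to $\alpha\in(0,1/2)$ and use a dominant such as $u^{-3/4}(1+u)^{-1/2}e^{-uz}$ for $u\le 1$ (still integrable) together with $u^{-1/2}(1+u)^{-1/2}e^{-uz}$ for $u\ge 1$. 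It is worth noting that the paper's parametrization avoids this issue altogether: there the exponent of $u$ is $(\alpha-1)/2\ge -1/2$, so $u^{-1/2}$ genuinely dominates near zero uniformly in $\alpha\in(0,1)$, which is one small advantage of working from \eqref{varGadamar} rather than from the Tricomi form. With that one-line correction your proof is complete and equivalent to the paper's.
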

  \begin{proof} It is sufficient to study the asymptotic behavior of covariance \begin{equation*}  \sigma^\alpha_{t,s} =\frac{1}{\Gamma(\alpha)}\int_0^{t\wedge s}\left(\log\frac tz\right)^{(\alpha-1)/2}\left(\log\frac sz\right)^{(\alpha-1)/2}dz. \end{equation*} Let $0<s<t$. Then we change the variables $\log\frac sz=u$ and get that 
  \begin{equation*}  \sigma^\alpha_{t,s} =\frac{s}{\Gamma(\alpha)}\int_0^{\infty}u^{(\alpha-1)/2}\left(u+\log\frac ts\right)^{(\alpha-1)/2}e^{-u}du. \end{equation*}
  If $\alpha\to 0$, then $\Gamma(\alpha)\to\infty$ while the Lebesgue dominated theorem supplies that $$\int_0^{\infty}u^{(\alpha-1)/2}\left(u+\log\frac ts\right)^{(\alpha-1)/2}e^{-u}du\to \int_0^{\infty}u^{  -1/2}\left(u+\log\frac ts\right)^{ -1/2}e^{-u}du<\infty,$$ whence $\sigma^\alpha_{t,s}\to 0.$ Furthermore, $\sigma^\alpha_{t,t} =t$, and the proof immediately follows.
  \end{proof}   
It follows from Theorem \ref{proposi3.1} that  the  process $\alpha^\varepsilon B^\mathcal{H}_\alpha$ reduces to an identically zero process as $\alpha\to 0$, for any $\varepsilon>0$. Now let us establish which normalizing factor suppresses the maximum.
 \begin{theorem} For any $T>0$ and any $\epsilon>0$ 
\begin{equation}\label{epsilon}
    \lim_{\alpha\to 0} \alpha^{1/2+\epsilon} \mathbb{E}\max_{0\le t\le T}|B^\mathcal{H}_\alpha(t)|=0.\end{equation}
    
\end{theorem}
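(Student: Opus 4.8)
The plan is to control the expected maximum of the centered Gaussian process $X:=B^\mathcal{H}_\alpha$ through a metric-entropy (Dudley) bound, feeding in the incremental-variance estimate already available from Theorem~\ref{thm1}. Since we let $\alpha\to 0$, only the regime $\alpha\in(0,1)$ matters, and there \eqref{eq:upperh1} yields the canonical pseudometric bound $d(s,t):=\big(\mathbb{E}|X(t)-X(s)|^2\big)^{1/2}\le \sqrt{C_{T,\alpha}}\,|t-s|^{\alpha/2}$. The first thing I would do is verify that the prefactor does not blow up, i.e. that $\sup_{0<\alpha<1/2}C_{T,\alpha}<\infty$. This is checked termwise after distributing $\Gamma(\alpha)^{-1}$ into the maximum defining $C_{T,\alpha}$: as $\alpha\to 0$ one has $\Gamma(\alpha)^{-1}\sim\alpha$, while the tail of the integrand in \eqref{firstI} behaves like $u^{-1-\alpha}$, so $I_\alpha\sim \alpha^{-1}$ and hence $\Gamma(\alpha)^{-1}I_\alpha T^{1-\alpha}=O(1)$; likewise $\Gamma(\alpha)^{-1}\alpha^{-1}2^\alpha T^{1-\alpha}=O(1)$ and $\Gamma(\alpha)^{-1}\cdot 2\Gamma(\alpha)T^{1-\alpha}=2T^{1-\alpha}=O(1)$, while the $J_\alpha$-term even tends to $0$ because $J_\alpha$ stays bounded. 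Thus $C_{T,\alpha}\le C_T$ uniformly for $\alpha$ near $0$.

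Next I would invoke Dudley's entropy bound. Because $\mathbb{E}(B^\mathcal{H}_\alpha(0))^2=0$ by \eqref{varvar}, we have $X(0)=0$ a.s., so $\max_{0\le t\le T}|X(t)|=\sup_{t}|X(t)-X(0)|$, and by the symmetry $-X\overset{d}{=}X$ of the centered Gaussian process, $\mathbb{E}\max_{0\le t\le T}|X(t)|\le 2\,\mathbb{E}\sup_{t}\big(X(t)-X(0)\big)\le C\int_0^{D_\alpha}\sqrt{\log N(\epsilon)}\,d\epsilon$, where $N(\epsilon)$ is the covering number of $[0,T]$ in the metric $d$ and $D_\alpha$ is its $d$-diameter. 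The covering number is estimated from $d(s,t)\le\sqrt{C_T}\,|t-s|^{\alpha/2}$: a Euclidean grid of spacing $\delta=(\epsilon/\sqrt{C_T})^{2/\alpha}$ covers $[0,T]$ by $d$-balls of radius $\epsilon$, giving $N(\epsilon)\le 2T(\sqrt{C_T}/\epsilon)^{2/\alpha}$, hence $\log N(\epsilon)\le \tfrac{2}{\alpha}\log(\sqrt{C_T}/\epsilon)+\log(2T)$ for $0<\epsilon\le D_\alpha\le \sqrt{C_T}\,T^{\alpha/2}$.

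Then I would substitute this into the entropy integral and extract the $\alpha$-dependence. Using $\sqrt{a+b}\le\sqrt a+\sqrt b$,
\begin{equation*}
\int_0^{D_\alpha}\sqrt{\log N(\epsilon)}\,d\epsilon\le \sqrt{\tfrac{2}{\alpha}}\int_0^{D_\alpha}\sqrt{\log(\sqrt{C_T}/\epsilon)}\,d\epsilon+\sqrt{\log(2T)}\,D_\alpha ,
\end{equation*}
and the substitution $\epsilon=\sqrt{C_T}\,e^{-x}$ turns the first integral into $\sqrt{C_T}\int_{-(\alpha/2)\log T}^{\infty}\sqrt{x}\,e^{-x}\,dx$, which converges to $\sqrt{C_T}\,\Gamma(3/2)$ as $\alpha\to 0$. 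Since $D_\alpha$ stays bounded, the whole expression is $O(\alpha^{-1/2})$, whence $\mathbb{E}\max_{0\le t\le T}|B^\mathcal{H}_\alpha(t)|\le C\,\alpha^{-1/2}$ for small $\alpha$. Multiplying by $\alpha^{1/2+\epsilon}$ gives $O(\alpha^{\epsilon})\to 0$, which is exactly \eqref{epsilon}.

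The main obstacle is sharpness. The Hölder exponent $\alpha/2$ in \eqref{eq:upperh1} degenerates to $0$ as $\alpha\to 0$, so the estimate must be run at the optimal entropy level. A naive dyadic chaining—bounding the maximum over the $2^n$ increments at scale $T2^{-n}$ by $\sqrt{C_T}(T2^{-n})^{\alpha/2}\sqrt{2n\log2}$ and summing—only yields order $\alpha^{-3/2}$, because the geometric sum $\sum_n\sqrt{n}\,2^{-n\alpha/2}$ is of order $\alpha^{-3/2}$; after multiplication by $\alpha^{1/2+\epsilon}$ this diverges. Hence the finer Dudley integral, which recovers the missing factor $\alpha$, is essential. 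A secondary point requiring care is precisely the uniform-in-$\alpha$ control of $C_{T,\alpha}$ carried out in the first step, since a hidden blow-up of the prefactor would otherwise spoil the rate.
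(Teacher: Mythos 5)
Your proof is correct and follows essentially the same route as the paper: both rest on the H\"older bound $\mathbb{E}|B^\mathcal{H}_\alpha(t)-B^\mathcal{H}_\alpha(s)|^2\le C_{T,\alpha}|t-s|^{\alpha}$ together with the key observation (which you verify termwise exactly as the paper does) that $C_{T,\alpha}$ stays bounded as $\alpha\to 0$, and then on an expected-maximum bound of order $\sqrt{C_{T,\alpha}}\,\alpha^{-1/2}$, so that multiplying by $\alpha^{1/2+\epsilon}$ leaves $O(\alpha^{\epsilon})$. The only difference is one of packaging: the paper imports the bound $\mathbb{E}\max_{0\le t\le 1}X_t<16.3\sqrt{C}/\sqrt{H}$ (with $H=\alpha/2$) from Theorem 2.1 of \cite{BoMiNoZh}, whereas you re-derive the same $\alpha^{-1/2}$ rate by an explicit Dudley entropy computation — and your closing remark that a Euclidean-dyadic chaining would only give $\alpha^{-3/2}$ correctly identifies why the sharper entropy integral (or the cited theorem) is needed.
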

\begin{proof} Due to the self-similarity of $B^\mathcal{H}_\alpha$, it is enough to consider only the interval $[0,1]$. We shall apply  Theorem 2.1 given in \cite{BoMiNoZh} . According to the second statement of the latter, if there exist $C>0$  and $H\in(0,1)$ such that for a Gaussian process $X$ $$\mathbb{E}|X_t-X_s|^2\le C|t-s|^{2H}, \qquad t,s\in[0,1],$$ then $$\mathbb{E}\max_{0\le t\le 1} X_t< \frac{16.3\sqrt{C}}{\sqrt{H}}. $$ In our case $2H=\alpha$ and $C=\alpha^{1+2\epsilon}  C_{1, \alpha}$, where $C_{T, \alpha}$ is presented  in \eqref{const}.
 Taking into account the symmetry of $B^\mathcal{H}_\alpha$ and ignoring constant multipliers, we see that it is enough to prove that \begin{equation}\label{limlim}\lim_{\alpha\to 0} \alpha^{2\epsilon}C_{1, \alpha}=0.
     \end{equation}  Let us analyze the behavior of the components whose maximum is taken in \eqref{const}, as $\alpha\to 0$. First, consider $ (\Gamma(\alpha))^{-1} I_\alpha  $, taking into account \eqref{upbound} and \eqref{firstI}. We see from    \eqref{upbound} that, for  $\alpha<1$, the integrand in $I_\alpha$ is bounded, at least for $u<2$, while, for $u>2$, the integrand is bounded by $u^{-1-\alpha}$. So it is enough to prove that $ \alpha^{2\epsilon}(\Gamma(\alpha))^{-1} \int_2^\infty u^{-1-\alpha}du\to 0,$ as $\alpha\to 0$. Note that $\alpha\Gamma(\alpha)=\Gamma(\alpha+1)\to 1$, as $\alpha\to 0$, and $\int_2^\infty u^{-1-\alpha}du=2^{-\alpha}/\alpha$. Therefore, as $\alpha\to 0$,  
$$ \alpha^{2\epsilon}(\Gamma(\alpha))^{-1} \int_2^\infty u^{-1-\alpha}du=\frac{\alpha^{ 2\epsilon}2^{-\alpha}}{ \Gamma(\alpha+1)}\to 0.$$ The other components in \eqref{const} can be considered similarly, but even more simply, because, for example,  integral $J_\alpha$ is bounded.  So, $C_{1,\alpha}$ is bounded, and \eqref{limlim} easily follows and theorem  is proved. 
\end{proof}
\begin{remark} Applying Fernique's  upper and lower bounds for Gaussian processes (see, e.g., \cite{lishao}), we can conclude that, for any  $T>0$, $$C_1^{1/2} \mathbb{E}\max_{0\le t\le T}B(t)\le \mathbb{E}\max_{0\le t\le T} B^\mathcal{H}_\alpha(t) \le (C_{T,\alpha})^{1/2}\mathbb{E}\max_{0\le t\le T} B^{\alpha/2}(t),$$ where $B$ is a Wiener process, and $B^{\alpha/2}$ is a fractional Brownian motion with Hurst index $\alpha/2.$  It means that $\mathbb{E}\max_{0\le t\le T} B^\mathcal{H}_\alpha(t)$ does not tend to zero as $\alpha\to 0$. However, at the moment, we can not essentially improve the multiplier $\alpha^{1/2+\epsilon}$ in \eqref{epsilon}.
    
\end{remark}
For $\alpha=1$ the $\mathcal{H}$-fBm reduces to a Wiener process. Now  we just establish that, for any $t>0$, $\mathcal{H}$-fBm approximates  the underlying  Wiener process in $L_2(\Omega)$, as $\alpha\to 1$, and we estimate  the rate of convergence. 
\begin{theorem} There exist    $\rho>0$ and $C>0$ such that for any $t>0$ and $\alpha\in(1-\rho,1+\rho)$
$$\mathbb{E}(B^\mathcal{H}_\alpha(t)-B(t))^2\le Ct(\alpha-1)^2.$$
    
\end{theorem}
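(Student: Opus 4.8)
The plan is to estimate the $L_2(\Omega)$-distance directly from the Wiener-integral representation \eqref{mvn}, which gives the two processes a common driving Brownian motion $B$. Since both $B^\mathcal{H}_\alpha(t)$ and $B(t)=B^\mathcal{H}_1(t)$ are stochastic integrals against the same $B$ with deterministic kernels, the It\^o isometry reduces the $L_2$-distance to a deterministic integral of the squared difference of the kernels:
\[
\mathbb{E}(B^\mathcal{H}_\alpha(t)-B(t))^2=\int_0^t\left[\frac{1}{\sqrt{\Gamma(\alpha)}}\left(\log\frac{t}{s}\right)^{(\alpha-1)/2}-1\right]^2ds.
\]
By the change of variable $u=\log(t/s)$ (so $s=te^{-u}$, $ds=-te^{-u}\,du$), this becomes
\[
t\int_0^\infty e^{-u}\left[\frac{u^{(\alpha-1)/2}}{\sqrt{\Gamma(\alpha)}}-1\right]^2du,
\]
and the factor $t$ already explains the linear dependence on $t$ in the claimed bound, so the whole problem becomes a uniform (in a neighborhood of $\alpha=1$) estimate of a one-variable integral $g(\alpha):=\int_0^\infty e^{-u}\big[u^{(\alpha-1)/2}/\sqrt{\Gamma(\alpha)}-1\big]^2du$ by $C(\alpha-1)^2$.

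Next I would expand the integrand. Writing $\beta:=(\alpha-1)/2$, the bracket is $u^{\beta}/\sqrt{\Gamma(2\beta+1)}-1$. At $\alpha=1$ (i.e. $\beta=0$) the integrand vanishes identically because $u^0/\sqrt{\Gamma(1)}=1$, so $g(1)=0$; one expects the leading behaviour to be quadratic in $\beta$, hence in $(\alpha-1)$. The cleanest route is to expand the square as
\[
g(\alpha)=\frac{1}{\Gamma(\alpha)}\int_0^\infty e^{-u}u^{\alpha-1}du-\frac{2}{\sqrt{\Gamma(\alpha)}}\int_0^\infty e^{-u}u^{(\alpha-1)/2}du+\int_0^\infty e^{-u}du,
\]
where the first integral is exactly $\Gamma(\alpha)$ and the last is $1$. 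Using $\int_0^\infty e^{-u}u^{(\alpha-1)/2}du=\Gamma\!\big(\tfrac{\alpha+1}{2}\big)$, this collapses to the remarkably simple closed form
\[
g(\alpha)=2-\frac{2\,\Gamma\!\big(\tfrac{\alpha+1}{2}\big)}{\sqrt{\Gamma(\alpha)}}.
\]
So the entire estimate reduces to showing $h(\alpha):=2-2\Gamma\!\big(\tfrac{\alpha+1}{2}\big)/\sqrt{\Gamma(\alpha)}\le C(\alpha-1)^2$ on some interval $(1-\rho,1+\rho)$.

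Finally I would analyze $h$ near $\alpha=1$ by Taylor expansion. One checks $h(1)=2-2\Gamma(1)/\sqrt{\Gamma(1)}=0$, and that $h'(1)=0$, using the digamma function: differentiating $\log\big(\Gamma(\tfrac{\alpha+1}{2})\big)-\tfrac12\log\Gamma(\alpha)$ gives $\tfrac12\psi(\tfrac{\alpha+1}{2})-\tfrac12\psi(\alpha)$, which vanishes at $\alpha=1$ since both arguments equal $1$. Hence $h(\alpha)=\tfrac12 h''(1)(\alpha-1)^2+o((\alpha-1)^2)$, and because $h$ is smooth (the Gamma function is analytic and nonvanishing on a neighborhood of $1$), Taylor's theorem with remainder yields $h(\alpha)\le C(\alpha-1)^2$ uniformly for $\alpha$ in a fixed closed interval $[1-\rho,1+\rho]$. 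Multiplying back by $t$ gives $\mathbb{E}(B^\mathcal{H}_\alpha(t)-B(t))^2\le Ct(\alpha-1)^2$, as claimed. The main obstacle, and the step requiring the most care, is justifying the differentiation under the integral sign (or, equivalently, controlling the behaviour of $u^{\beta}$ near $u=0$ and $u=\infty$ uniformly in $\beta$) to obtain the closed form and the uniform second-order remainder; once the closed form $g(\alpha)=2-2\Gamma(\tfrac{\alpha+1}{2})/\sqrt{\Gamma(\alpha)}$ is in hand, the rest is an elementary smoothness argument.
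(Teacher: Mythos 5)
Your proposal is correct, and it follows the same skeleton as the paper's proof (It\^o isometry applied to the representation \eqref{mvn}, reduction to Gamma-function quantities, second-order behaviour at $\alpha=1$), but the execution differs in a way worth noting. The paper first reduces to $t=1$ by self-similarity, then splits the integrand via $\bigl[(\log\frac1s)^{(\alpha-1)/2}-\sqrt{\Gamma(\alpha)}\bigr]^2\le 2\bigl[(\log\frac1s)^{(\alpha-1)/2}-1\bigr]^2+2(\sqrt{\Gamma(\alpha)}-1)^2$, and treats the two pieces separately: the second by the Lagrange theorem for $\Gamma$ near $1$ (using $\Gamma'(1)=-\gamma$), the first by computing it exactly as $\Gamma(\alpha)-2\Gamma(\frac{\alpha+1}{2})+1$ and bounding it through an iterated integral of $\Gamma'$ by $\frac14\max|\Gamma''|(\alpha-1)^2$; this yields explicit constants $\rho$ and $C$. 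You avoid the splitting entirely by observing that the whole variance has the closed form $t\bigl(2-2\Gamma(\frac{\alpha+1}{2})/\sqrt{\Gamma(\alpha)}\bigr)$, after which a Taylor expansion of $h(\alpha)=2-2\Gamma(\frac{\alpha+1}{2})/\sqrt{\Gamma(\alpha)}$ at $\alpha=1$ (where $h(1)=0$ and $h'(1)=\psi(1)-\psi(1)=0$ by the digamma computation) finishes the argument. This is tidier and gives an exact identity rather than an upper bound, at the price of slightly less explicit constants. One remark: the ``main obstacle'' you flag --- justifying differentiation under the integral sign --- is not actually present. The closed form comes from expanding the square and integrating term by term, each term being an absolutely convergent Gamma integral for $\alpha$ near $1$, and the subsequent Taylor expansion is applied to an explicit, analytic function of $\alpha$ alone, so no uniform-in-$\beta$ control of the integrand is required.
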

\begin{proof} It is sufficient to consider $t=1$ because both processes are self-similar with index $1/2$. So, we bound the value 
\begin{equation}\begin{gathered}\label{gammasecond}
 \mathbb{E}(B^\mathcal{H}_\alpha(1)-B(1))^2=(\Gamma(\alpha))^{-1}\int_0^1\left[\left(\log\frac1s\right)^{\frac{\alpha-1}{2}}-\sqrt{\Gamma(\alpha)}\right]^2ds\\ \le 2(\Gamma(\alpha))^{-1}\int_0^1\left[\left(\log\frac1s\right)^{\frac{\alpha-1}{2}}-1\right]^2ds+2(\Gamma(\alpha))^{-1}(\sqrt{\Gamma(\alpha)}-1)^2.
     \end{gathered}   
\end{equation}
It is well-known that the Gamma function admits a unique point of minimum among positive arguments, that this minimum is achieved at point $x=1,4616321\ldots$ and it equals $G_m=0,885603$. Furthermore, $\Gamma'(1)=-\gamma$, where $\gamma=\lim_{n\to\infty}\left(\sum_{k=1}^n \frac1k-\log n\right)$ is the Euler constant. Therefore there exists an interval $(1-\rho_1,1+\rho_1)$, in which $|\Gamma'(x)|<2\gamma$. Then, for   $\alpha\in(1-\rho,1+\rho)$, according to Lagrange theorem and taking into account that $1=\Gamma(1),$ we get $$2(\Gamma(\alpha))^{-1}(\sqrt{\Gamma(\alpha)}-1)^2\le 2(\Gamma(\alpha))^{-1}(\sqrt{\Gamma(\alpha)}+1)^{-2} (\Gamma(\alpha)-1)^{2} \le 2(G_m)^{-2}4\gamma^2(\alpha-1)^2.$$ Furthermore, \begin{equation*}\begin{gathered}\int_0^1\left[\left(\log\frac1s\right)^{\frac{\alpha-1}{2}}-1\right]^2ds=\int_0^\infty(z^ {\frac{\alpha-1}{2}}-1)^2e^{-z}dz=\Gamma(\alpha)-2\Gamma\left(\frac{\alpha+1}{2}\right)+1\\=\int_{\frac{\alpha+1}{2}}^\alpha\Gamma'(x)dx-\int_{1}^{\frac{\alpha+1}{2}}\Gamma'(x)dx=\int_1^{\frac{\alpha+1}{2}}(\Gamma'(x+\frac{\alpha-1}{2})-\Gamma'(x))dx\\ \le \frac14\max_{y\in(1\wedge\alpha, 1\vee\alpha)}|\Gamma''(y)|(\alpha-1)^2.\end{gathered}   
\end{equation*} 
Since $\Gamma''(1)=\gamma^2+\frac{\pi^2}{6}$, there exists an interval $(1-\rho_2,1+\rho_2)$, in which $|\Gamma''(x)|<2\left(\gamma^2+\frac{\pi^2}{6}\right)$. Taking $\rho=\rho_1\wedge\rho_2$, and $C= \max\left\{2(G_m)^{-2}4\gamma^2, \frac12\left(\gamma^2+\frac{\pi^2}{6}\right)\right\}$, the result is proved. 
    
\end{proof}
\begin{remark} Any attempt to establish similar estimates at point $\alpha=2$ would fail because  both  integrals in the   \eqref{gammasecond} do  not tend to zero, as $\alpha \to 2$. However, as we mentioned before, the point  $\alpha=2$ is not crucial for our process. 
    
\end{remark}

 \section{Power variations of $\mathcal{H}$-fBm}\label{power}  
  Having in hand self-similarity  with index $1/2$, variance that is proportional to $t$ (according to  \eqref{varvar}),  and H\"{o}lder continuity up to order $1/2$ for $\alpha\in(1,2)$ (according to Theorems \ref{thm1} and \ref{thm2}), all these  properties being the same as a Wiener process admits, it is natural to expect that the properties of the $p$-variations  will also be similar to the $p$-variations of the Wiener process. However, the absence of stationarity makes its own adjustments,  as we shall see in  the next Theorems \ref{thm} and \ref{thm2.4}.  Note that the results about existence and properties of $p$-variations considered as the supremum  in all partitions, are sufficiently difficult to investigate in the nonstationary case, therefore  we restrict ourselves to the limit of the power sums according to the sequences of dyadic partitions, and in the mean sense.

\begin{definition} We say that the process $X$ has on the interval $[0,T]$ finite non-zero, zero and infinite $p$-variation if the limit 
 \begin{equation}
    V^p(X, [0,T])=\lim_{n \to \infty}\mathbb{E}\sum_{k=1}^n\left| X\left(\frac{Tk}{n}\right)-X\left(\frac{T(k-1)}{n}\right) \right|^p  . \label{qv}  
\end{equation}  is strictly positive, zero and infinite, respectively.  
\end{definition}
\begin{theorem}\label{thm}
Let $T>0$ and let $B^\mathcal{H}_\alpha$ be the $\mathcal{H }$-fBm with $\alpha \in (1,2)$. Then   $p$-variation $V^p(B^\mathcal{H}_\alpha,  [0,T])$ is finite and non-zero for $p=\frac{2}{\alpha}$, zero for $p>\frac{2}{\alpha}$ and infinite for $p<\frac{2}{\alpha}$.
 In particular, quadratic variation $V^2(B^\mathcal{H}_\alpha,  [0,T])$ is zero (contrary to well known  $V^2(B,  [0,T])=T.$)
\end{theorem}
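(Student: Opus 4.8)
The plan is to exploit the two structural features established earlier---Gaussianity and self-similarity with index $1/2$---to collapse the $p$-variation into a one-dimensional summation problem. Since $B^\mathcal{H}_\alpha(t)-B^\mathcal{H}_\alpha(s)$ is centered Gaussian, I would first write $\mathbb{E}|B^\mathcal{H}_\alpha(t)-B^\mathcal{H}_\alpha(s)|^p=c_p\,(v_{s,t})^{p/2}$, where $c_p=\mathbb{E}|Z|^p$ for $Z\sim N(0,1)$ and $v_{s,t}:=\sigma^\alpha_{t,t}-2\sigma^\alpha_{s,t}+\sigma^\alpha_{s,s}$ is the incremental variance. Reducing to $T=1$ by self-similarity and applying the scaling $v_{ca,\,ca+ch}=c\,v_{a,\,a+h}$ with $c=1/n$, $a=k-1$, $h=1$, the $k$-th term over the uniform partition satisfies $v_{(k-1)/n,\,k/n}=\tfrac1n\,g(k-1)$, where $g(j):=\mathbb{E}|B^\mathcal{H}_\alpha(j+1)-B^\mathcal{H}_\alpha(j)|^2$. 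Hence
\[
\mathbb{E}\sum_{k=1}^n\Big|B^\mathcal{H}_\alpha\big(\tfrac{k}{n}\big)-B^\mathcal{H}_\alpha\big(\tfrac{k-1}{n}\big)\Big|^p
= c_p\,n^{-p/2}\sum_{j=0}^{n-1}g(j)^{p/2},
\]
so everything is governed by the large-$j$ behaviour of $g(j)$.

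The core step is to show $g(j)\sim A_\alpha\,j^{1-\alpha}$ as $j\to\infty$, with $A_\alpha=\Gamma(\alpha)^{-1}(K_\alpha+\alpha^{-1})$ and $K_\alpha$ the convergent integral in \eqref{qv3}. I would start from $g(j)=\Gamma(\alpha)^{-1}[J^{(1)}_{j,j+1}+J^{(2)}_{j,j+1}]$. For the first piece the scaled form \eqref{qv3} gives $J^{(1)}_{j,j+1}=j\,(\log\tfrac{j+1}{j})^\alpha\int_0^\infty u^{-\alpha-1}(\tfrac{j+1}{j})^{-1/u}[(u+1)^{(\alpha-1)/2}-1]^2\,du$; since the factor $(\tfrac{j+1}{j})^{-1/u}$ tends to $1$ and is dominated by the integrable $u^{-\alpha-1}[(u+1)^{(\alpha-1)/2}-1]^2$, dominated convergence sends the integral to $K_\alpha$, while $(\log\tfrac{j+1}{j})^\alpha\sim j^{-\alpha}$, giving $J^{(1)}_{j,j+1}\sim K_\alpha j^{1-\alpha}$. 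For the second piece, writing $J^{(2)}_{j,j+1}=(j+1)\int_0^{\log((j+1)/j)}z^{\alpha-1}e^{-z}\,dz$ and using $\log\tfrac{j+1}{j}\to 0$ yields $J^{(2)}_{j,j+1}\sim(j+1)\,\alpha^{-1}(\log\tfrac{j+1}{j})^\alpha\sim\alpha^{-1}j^{1-\alpha}$. Adding the two proves the claim. (If one prefers to avoid the exact constant, the same manipulations give two-sided bounds $c_1 j^{1-\alpha}\le g(j)\le c_2 j^{1-\alpha}$, which already settle the zero/infinite cases.)

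It remains to feed $g(j)^{p/2}\sim A_\alpha^{p/2}j^{(1-\alpha)p/2}$ into the sum. Setting $\gamma:=(1-\alpha)p/2<0$, comparison of $\sum_{j=1}^{n-1}j^{\gamma}$ with $\int_1^n x^\gamma\,dx$ shows that the normalized sum behaves like $n^{\,1-\alpha p/2}$ when $\gamma>-1$, like $n^{-p/2}\log n$ when $\gamma=-1$, and like $n^{-p/2}$ when $\gamma<-1$; the last two regimes force $p>2/(\alpha-1)>2/\alpha$ and give limit $0$. In the regime $\gamma>-1$ the exponent $1-\alpha p/2$ is positive, zero, or negative according as $p<2/\alpha$, $p=2/\alpha$, or $p>2/\alpha$, producing the infinite, finite-nonzero, and zero cases; the $j=0$ term contributes $n^{-p/2}\to 0$ and is harmless. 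Choosing $p=2>2/\alpha$ (valid since $\alpha<2$) specializes to $V^2(B^\mathcal{H}_\alpha,[0,T])=0$. I expect the main obstacle to be the critical case $p=2/\alpha$, where $\gamma=1/\alpha-1\in(-1/2,0)$ and one needs \emph{genuine convergence} of the normalized sum, not merely boundedness: since $g(j)^{1/\alpha}/j^{\gamma}\to A_\alpha^{1/\alpha}$ and $\sum_j j^{\gamma}$ diverges with $\sum_{j=1}^{n-1}j^{\gamma}\sim\alpha\,n^{1/\alpha}$, the Stolz--Ces\`{a}ro theorem yields $\lim_n V^{2/\alpha}=c_{2/\alpha}A_\alpha^{1/\alpha}\alpha>0$. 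This is precisely why the sharp asymptotic for $g(j)$, rather than two-sided bounds, is required at the threshold.
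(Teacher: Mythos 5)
Your proposal is correct and follows essentially the same route as the paper: self-similarity plus the Gaussian moment identity reduce everything to the unit-spaced increment variances $g(j)=\Gamma(\alpha)^{-1}\bigl(J^{(1)}_{j,j+1}+J^{(2)}_{j,j+1}\bigr)$, whose large-$j$ behavior is read off from the same scaled integral \eqref{qv3} and the incomplete-gamma form of $J^{(2)}$, with Stolz--Ces\`{a}ro settling the threshold $p=2/\alpha$. The one place where you are more explicit than the paper is worth keeping: you prove the sharp asymptotic $g(j)\sim A_\alpha j^{1-\alpha}$ with an identified constant via dominated convergence, which is precisely what the existence of a genuine limit at $p=2/\alpha$ requires, whereas the paper only asserts that $J^{(1)}_n+J^{(2)}_n\sim n^{1-\alpha}$ ``easily follows'' from its one-sided bounds.
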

\begin{proof} (i). Let $p>\frac{2}{\alpha}.$ First, using self-similarity, we can write that $$V^p(B^\mathcal{H}_\alpha, [0,T])=T^{p/2}\lim_{n \to \infty}n^{-p/2}\mathbb{E}\sum_{k=1}^n\left| B^\mathcal{H}_\alpha\left(k\right)-B^\mathcal{H}_\alpha\left( k-1\right) \right|^p. $$  Furthermore, taking into account the relation $$\mathbb{E}|\xi|^p=\left(\mathbb{E}|\xi|^2\right)^{p/2}=\frac{2^{p/2}\Gamma(\frac{p+1}{2})}{\sqrt{\pi}}$$ for a zero-mean Gaussian random variable, it is enough to analyze the  limiting behavior of the following functional: 
$$\widetilde{V}^p(B^\mathcal{H}_\alpha)= \lim_{n \to \infty}n^{-p/2}\sum_{k=1}^n \left(\mathbb{E}\left|B^\mathcal{H}_\alpha\left(k\right)-B^\mathcal{H}_\alpha\left( k-1\right) \right|^2\right)^{p/2}. $$ 
Let us start to analyze the integrals, that, in the previously introduced notations, equal   
\begin{eqnarray}\label{kand k-1}
 J^{(1)}_k:= J^{(1)}_{k-1,k}
 &=&\int _{0}^{k-1} \left[\left( \log \frac{k}{u}\right)^{(\alpha-1)/2} -\left( \log \frac{ k-1}{ u}\right)^{(\alpha-1)/2}\right]^2du\\
& = &   (k-1)\int_0^\infty \left[  \left(\log \frac{k}{k-1}+v  \right)^{\frac{\alpha-1}{2}}-v^{\frac{\alpha-1}{2}}\right]^2e^{-v}dv.\notag
\end{eqnarray}
Now we can apply to the right-hand side of \eqref{kand k-1} the upper bound  from \eqref{qv3},  with $t=k$ and $s=k-1$, and also the inequality $\log(1+x)\le x, \, x>0$, so that we obtain that 
\begin{eqnarray} 
  J^{(1)}_k \le (k-1)\left(\log\left(\frac{k}{k-1}\right)\right)^\alpha\le (k-1)^{1-\alpha}, 
\end{eqnarray}
for $k=2,..,n$, while $J^{(1)}_1 =0.$ 
Therefore, taking into account that    $\sum_{l=1}^{n-1}l^{\beta}\sim n^{\beta+1}$, we arrive at   the relation   $$n^{-p/2}\sum_{k=2}^n(J^{(1)}_k)^{p/2} \leq Cn^{-p/2}\sum_{l=1}^{n-1}l^{(1-\alpha)\frac{p}{2}}\sim  Cn^{1-\frac{\alpha p}{2}} \to 0,$$ 

as $n \to \infty$, if   $\alpha \in (1,2) $ and $p>\frac{2}{\alpha}. $
Now, for the last integral  in \eqref{hol4}, we can write that
\begin{eqnarray}
 J^{(2)}_k:=J^{(2)}_{k-1,k}
 &=&\int _{k-1}^{k} \left( \log \frac{k}{u}\right)^{\alpha-1} du
 =  k\int _{1}^{k/(k-1)} \left( \log z \right)^{\alpha-1} \frac{dz}{z^2} 
  \notag \\
&\leq&  \frac{k}{k-1}\left( \log \frac{k}{k-1} \right)^{\alpha-1} \leq  \left( \frac{1}{k-1} \right)^{\alpha-1}, \notag
\end{eqnarray}
for $k=2,..,n$, and $J^{(2)}_1 $ is a finite constant,  so that $\lim_{n\to\infty}n^{-p/2}\sum_{k=2}^n(J^{(2)}_k )^{p/2}  = 0$,  for $p>\frac{2}{\alpha}$ as above.

(ii). Let $p<\frac{2}{\alpha}.$ It is sufficient to bound from below the sum related to $J^{(2)}_k$, as follows: 
\begin{equation*}\begin{gathered}
 J^{(2)}_k
 =  k\int _{1}^{k/(k-1)} \left( \log z \right)^{\alpha-1} \frac{dz}{z^2}\ge \frac{k(k-1)^2}{k^2}\int _{1}^{k/(k-1)} \left( \log z \right)^{\alpha-1}  dz\\ =\frac{(k-1)^2}{k}\int_0^{\log\left(\frac{k}{k-1}\right)}v^{\alpha-1}e^vdv\ge \frac{(k-1)^2}{\alpha k}\left(\log\left(\frac{k}{k-1}\right)\right)^{\alpha} .    
\end{gathered} 
    \end{equation*} Instead of the upper bound for the logarithm as before, we can apply the Stolz-Ces\`{a}ro theorem, taking into account that $$n^{p/2}-(n-1)^{p/2}\sim n^{p/2}(1-(1-1/n)^{p/2})\sim n^{p/2-1},$$ as $n\to\infty$:

 \begin{equation}\label{equless}\begin{gathered}
n^{-p/2}\sum_{k=2}^n(J^{(2)}_k )^{p/2}\ge n^{-p/2}\sum_{k=2}^n \frac{(k-1)^p}{(\alpha k)^{p/2}}\left(\log\left(\frac{k}{k-1}\right)\right)^{\alpha p/2}\\ \sim \frac{n^{p/2}\left(\log\left(\frac{n}{n-1}\right)\right)^{\alpha p/2}}{n^{p/2}-(n-1)^{p/2}}\sim\frac{n^{p/2}(n-1)^{-\alpha p/2}}{n^{p/2-1}}\sim n^{1-\alpha p/2}\to \infty 
 \end{gathered} 
    \end{equation} as $n\to \infty$, if $p<2/\alpha.$

    (iii). Finally, let $p=2/\alpha.$  Again, applying Stolz-Ces\`{a}ro theorem, we see that the limit
 $$\lim_{n\to\infty}(n^{1/\alpha}-(n-1)^{1/\alpha})^{-1} (J^{(1)}_n+J^{(2)}_n )^{1/\alpha}$$ exists, is finite and nonzero. However, it easily follows from all the previous upper and lower bounds that $J^{(1)}_n+J^{(2)}_n\sim n^{1-\alpha}$ while $(n^{1/\alpha}-(n-1)^{1/\alpha})^{-1}\sim n^{-1/\alpha+1}$, and the result follows. 
\end{proof}
\begin{theorem}\label{thm2.4}
 Let $T>0$ and let $B^\mathcal{H}_\alpha$ be the $\mathcal{H}$-fBm with $\alpha \in (0,1)$. Then the  $p$-variation $V^p(B^\mathcal{H}_\alpha,  [0,T])$ is finite and nonzero for $p=\frac{2}{\alpha}$, zero for $p>\frac{2}{\alpha}$ and infinite for $p<\frac{2}{\alpha}$. In particular, the quadratic variation is infinite.    
\end{theorem}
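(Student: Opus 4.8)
The plan is to follow verbatim the scheme of the proof of Theorem \ref{thm}, since the reduction steps are insensitive to the range of $\alpha$; the only genuinely new ingredient is the asymptotics of the one-interval increments, which now \emph{grow} instead of decaying. First I would invoke the $1/2$-self-similarity of $B^\mathcal{H}_\alpha$ to write
\[V^p(B^\mathcal{H}_\alpha,[0,T])=T^{p/2}\lim_{n\to\infty}n^{-p/2}\,\mathbb{E}\sum_{k=1}^{n}\bigl|B^\mathcal{H}_\alpha(k)-B^\mathcal{H}_\alpha(k-1)\bigr|^{p},\]
and then use the Gaussian absolute-moment identity $\mathbb{E}|\xi|^p=(\mathbb{E}|\xi|^2)^{p/2}\,2^{p/2}\Gamma(\tfrac{p+1}{2})/\sqrt\pi$ (for centered Gaussian $\xi$) to reduce everything to the study of
\[\widetilde V^p(B^\mathcal{H}_\alpha)=\lim_{n\to\infty}n^{-p/2}\sum_{k=1}^{n}\Bigl(\tfrac{1}{\Gamma(\alpha)}\bigl(J^{(1)}_k+J^{(2)}_k\bigr)\Bigr)^{p/2},\]
with $J^{(1)}_k=J^{(1)}_{k-1,k}$ and $J^{(2)}_k=J^{(2)}_{k-1,k}$ as in \eqref{hol4} and \eqref{kand k-1}.

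The crucial step is to establish the sharp asymptotics
\[\mathbb{E}\bigl|B^\mathcal{H}_\alpha(k)-B^\mathcal{H}_\alpha(k-1)\bigr|^2 \sim C_\alpha\,k^{1-\alpha},\qquad k\to\infty,\]
for a constant $C_\alpha>0$; this is precisely where the case $\alpha\in(0,1)$ departs from $\alpha\in(1,2)$, because now $1-\alpha>0$ and the increments blow up. For $J^{(2)}_k$ I would pass to $J^{(2)}_k=k\int_0^{\log(k/(k-1))}z^{\alpha-1}e^{-z}\,dz$ and use $e^{-z}\to1$ together with $\log(k/(k-1))\sim 1/k$ to get $J^{(2)}_k\sim \tfrac1\alpha k^{1-\alpha}$. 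For $J^{(1)}_k$ I would start from the exact representation \eqref{qv3} with $s=k-1$, $t=k$, which reads
\[J^{(1)}_k=(k-1)\Bigl(\log\tfrac{k}{k-1}\Bigr)^{\alpha}\int_0^{\infty}u^{-\alpha-1}\Bigl(\tfrac{k}{k-1}\Bigr)^{-1/u}\bigl[(u+1)^{(\alpha-1)/2}-1\bigr]^2\,du ,\]
and let $k\to\infty$: since $(k/(k-1))^{-1/u}\to1$ pointwise and the integrand is dominated by $u^{-\alpha-1}[(u+1)^{(\alpha-1)/2}-1]^2$, the dominated convergence theorem yields $J^{(1)}_k\sim K_\alpha(k-1)^{1-\alpha}\sim K_\alpha k^{1-\alpha}$. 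Here one must verify that $K_\alpha=\int_0^\infty u^{-\alpha-1}[(u+1)^{(\alpha-1)/2}-1]^2\,du$ is finite also for $\alpha\in(0,1)$: near $u=0$ the integrand behaves like $u^{1-\alpha}$ (integrable since $\alpha<2$), while near $u=\infty$ it behaves like $u^{-\alpha-1}$ (integrable since $\alpha>0$)\,---\,note that this behaviour at infinity differs from the $(1,2)$ case. Thus $C_\alpha=(K_\alpha+1/\alpha)/\Gamma(\alpha)>0$.

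With these asymptotics in hand the trichotomy follows from the elementary fact that $a_k\sim L k^{\gamma}$ with $\gamma>-1$ implies $\sum_{k=1}^n a_k\sim \frac{L}{\gamma+1}n^{\gamma+1}$ (equivalently, from the Stolz--Ces\`aro theorem used in Theorem \ref{thm}). Indeed $\bigl(\mathbb{E}|B^\mathcal{H}_\alpha(k)-B^\mathcal{H}_\alpha(k-1)|^2\bigr)^{p/2}\sim C_\alpha^{p/2}k^{(1-\alpha)p/2}$ with exponent $(1-\alpha)p/2>0$, so
\[n^{-p/2}\sum_{k=1}^n\bigl(\cdots\bigr)^{p/2}\sim \frac{C_\alpha^{p/2}}{(1-\alpha)p/2+1}\,n^{\,1-\alpha p/2}.\]
Hence $\widetilde V^p$ is zero for $p>2/\alpha$, infinite for $p<2/\alpha$, and for $p=2/\alpha$ the exponent vanishes and the limit equals $\alpha C_\alpha^{1/\alpha}$, which is finite and nonzero. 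In particular, since $\alpha<1$ forces $2<2/\alpha$, the quadratic variation ($p=2$) is infinite, in sharp contrast with the Wiener case. Finally I would restore the positive factors $T^{p/2}$ and $2^{p/2}\Gamma(\tfrac{p+1}{2})/\sqrt\pi$, which do not affect the classification.

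The main obstacle is the \emph{sharp} asymptotics of $J^{(1)}_k$. Because $(\alpha-1)/2<0$, the integrand in the raw representation \eqref{kand k-1} is singular at the origin, so one cannot simply control the difference by a Lagrange-type bound as was done for $\alpha\in(1,2)$. The clean route is exactly the scaling change of variables producing \eqref{qv3}, which extracts the factor $(\log\tfrac{k}{k-1})^{\alpha}$ and leaves an integral amenable to dominated convergence; the only point requiring care is the convergence of $K_\alpha$ at infinity for $\alpha\in(0,1)$, where the integrand decays merely like $u^{-\alpha-1}$ rather than $u^{-2}$. Establishing the precise constant (rather than only two-sided bounds of order $k^{1-\alpha}$) is what guarantees that the limit defining the borderline $p=2/\alpha$ variation actually exists and is strictly positive and finite.
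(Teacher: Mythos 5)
Your proposal is correct and follows essentially the same route as the paper: reduce by self-similarity and the Gaussian moment identity to the asymptotics of $J^{(1)}_k+J^{(2)}_k$, show by dominated convergence that this is $\sim C_\alpha k^{1-\alpha}$ with an explicit positive constant, and read off the trichotomy (and the finiteness and positivity of the borderline $p=2/\alpha$ limit) from the resulting $n^{1-\alpha p/2}$ behaviour. The only cosmetic difference is that you extract the factor $\left(\log\frac{k}{k-1}\right)^{\alpha}$ from $J^{(1)}_k$ via the representation \eqref{qv3} with the substitution $z=\log(t/s)/u$, whereas the paper works with the form \eqref{2.5} and substitutes $z=\rho v$; the two limiting integrals coincide under $u=1/v$, so the constants agree and the arguments are equivalent.
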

\begin{proof} Recalling the proof of Theorem \ref{thm} and, in particular, the application of the Stolz-Ces\`{a}ro theorem, it is enough to establish the asymptotic behavior of $$(J^{(1)}_n+J^{(2)}_n )^{p/2}$$ and to compare it with $n^{p/2-1}$. We can rewrite the integral $J^{(1)}_n $, up to a multiplicative constant, according to \eqref{hol4}  (with $t=n$ and $s=n-1$),  in the following way: 
 \begin{eqnarray}
 &&J^{(1)}_n  
 \sim  \int _{0}^{n-1} \left[\left( \log \frac{n}{u}\right)^{(\alpha-1)/2} -\left( \log \frac{n-1}{u}\right)^{(\alpha-1)/2}\right]^2du \notag\\
&\sim& n\int _{0}^{+\infty}\frac{e^{-z}\left[\left( \log \frac{n}{n-1}+z\right)^{(1-\alpha)/2} -z^{(1-\alpha)/2}\right]^2}{z^{1-\alpha}\left(\log \frac{n}{n-1}+z\right)^{1-\alpha}}dz , \notag
\\
&=& n\rho^\alpha\int _{0}^{+\infty}\frac{e^{-\rho v}\left[\left( 1+v\right)^{(1-\alpha)/2} -v^{(1-\alpha)/2}\right]^2}{v^{1-\alpha}\left(1+v\right)^{1-\alpha}}dv, \notag
\end{eqnarray}
 where $\rho=\log(n/n-1)$. Obviously, $$\int _{0}^{+\infty}\frac{e^{-\rho v}\left[\left( 1+v\right)^{(1-\alpha)/2} -v^{(1-\alpha)/2}\right]^2}{v^{1-\alpha}\left(1+v\right)^{1-\alpha}}dv\to \int _{0}^{+\infty}\frac{ \left[\left( 1+v\right)^{(1-\alpha)/2} -v^{(1-\alpha)/2}\right]^2}{v^{1-\alpha}\left(1+v\right)^{1-\alpha}}dv \in (0,\infty),$$ 
 as $n \to \infty$; therefore $$J^{(1)}_n \sim n\rho^\alpha\sim n^{1-\alpha}.$$
 Furthermore, $J^{(2)}_n$ is estimated similarly to the case $\alpha\in(1,2)$ and its asymptotic behavior is $J^{(2)}_n\sim n^{1-\alpha}.$ Finally, $$(J^{(1)}_n+J^{(2)}_n)^{p/2}\sim n^{(1-\alpha)p/2},$$
 and $$V^p(B^\mathcal{H}_\alpha,  [0,T])\sim n^{(1-\alpha)p/2}n^{-p/2+1}=n^{1-\alpha p/2}.$$ We thus obtained the same asymptotics as for the case $\alpha\in(1,2)$, and the theorem is proved.  
\end{proof}
\section{Local nondeterminism of $\mathcal{H}$-fBm}\label{locnondeterm}
Now we shall prove  that $\mathcal{H}$-fBm  is locally nondeterministic on any open interval $(0,T)$,     $T>0$. Recall that, according to \cite{Berman}, a zero mean Gaussian process $\{X(t)\}_{t\in\mathbb{R}}$ is  called  locally nondeterministic  (LND) on some interval $\mathbb{T}=(a,b)$ if $ X$ satisfies   the following three assumptions:
\begin{itemize}\label{condition A}
\item[ $(i)$] $\mathbb{E}X^2(t)>0$ for all $t\in \mathbb{T}$;
\item[$(ii)$] $\mathbb{E} (X(t)-X(s))^2 >0$ for all $t,s\in \mathbb{T}$;
\item[$(iii)$] for any $m\geq 2$,
\begin{equation}\label{BermanVM}
\liminf_{\epsilon\downarrow 0}V_{m}=\frac{{\text{Var}}(X(t_m)-X(t_{m-1})|X(t_1), \ldots, X(t_{m-1}))}{{\text{Var}}(X(t_m)-X(t_{m-1}))}>0,
\end{equation}
where the $\liminf$ is taken over distinct, ordered $t_1<t_2<\ldots<t_m\in (a,b)$ with $|t_1-t_m|<\epsilon$.
\end{itemize}
\begin{theorem}
Let $\alpha\in(0,1)\cup(1,2)$. Then  $\mathcal{H}$-fBm  $B^\mathcal{H}_\alpha$ is locally nondeterministic on any interval $(0,T), \,T>0$.  
\end{theorem}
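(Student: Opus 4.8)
The plan is to verify the three defining conditions of local nondeterminism, the first two being immediate and the third reducing to a uniform comparison of the two integrals $J^{(1)}_{s,t}$ and $J^{(2)}_{s,t}$ introduced in \eqref{hol4}. Condition $(i)$ holds since $\mathbb{E}(B^\mathcal{H}_\alpha(t))^2=t>0$ for $t\in(0,T)$ by \eqref{varvar}. Condition $(ii)$ follows from the generalized quasi-helix lower bound of Section~\ref{quasi}, which gives $\mathbb{E}(B^\mathcal{H}_\alpha(t)-B^\mathcal{H}_\alpha(s))^2\ge C_1|t-s|^{\gamma_1}>0$ for $s\neq t$.

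For condition $(iii)$ I would exploit the Wiener-integral representation \eqref{mvn}. Put $g_\tau(u):=\tfrac{1}{\sqrt{\Gamma(\alpha)}}\bigl(\log(\tau/u)\bigr)_+^{(\alpha-1)/2}$, so that $B^\mathcal{H}_\alpha(\tau)=\int_0^\infty g_\tau\,dB$ and $g_\tau$ is supported on $(0,\tau)$. For jointly centered Gaussian variables the conditional variance equals the squared $L_2(\Omega)$-distance from $B^\mathcal{H}_\alpha(t_m)-B^\mathcal{H}_\alpha(t_{m-1})$ to $\operatorname{span}\{B^\mathcal{H}_\alpha(t_1),\dots,B^\mathcal{H}_\alpha(t_{m-1})\}$, which by the Wiener isometry is the $L_2(\mathbb{R}^+)$-distance from $g_{t_m}-g_{t_{m-1}}$ to $\operatorname{span}\{g_{t_1},\dots,g_{t_{m-1}}\}$. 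All conditioning kernels $g_{t_1},\dots,g_{t_{m-1}}$ vanish on $(t_{m-1},t_m)$, whereas there $g_{t_m}-g_{t_{m-1}}$ coincides with $g_{t_m}$; hence no linear combination of them can alter the restriction to $(t_{m-1},t_m)$, and, writing $s=t_{m-1}$, $t=t_m$,
\[
\operatorname{Var}\bigl(B^\mathcal{H}_\alpha(t_m)-B^\mathcal{H}_\alpha(t_{m-1})\mid B^\mathcal{H}_\alpha(t_1),\dots,B^\mathcal{H}_\alpha(t_{m-1})\bigr)\ge\int_s^t g_t(u)^2\,du=\tfrac{1}{\Gamma(\alpha)}J^{(2)}_{s,t}.
\]
Since $\operatorname{Var}(B^\mathcal{H}_\alpha(t)-B^\mathcal{H}_\alpha(s))=\tfrac{1}{\Gamma(\alpha)}\bigl(J^{(1)}_{s,t}+J^{(2)}_{s,t}\bigr)$ by \eqref{hol4}, this gives $V_m\ge J^{(2)}_{s,t}/(J^{(1)}_{s,t}+J^{(2)}_{s,t})$ uniformly in $m$ and in the remaining points. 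Thus $(iii)$ reduces to proving $\sup_{0<s<t\le T}J^{(1)}_{s,t}/J^{(2)}_{s,t}<\infty$.

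To finish I would bound this ratio with the estimates of Section~\ref{pathwise}, setting $r:=\log(t/s)$ and distinguishing $r<1$ from $r\ge1$. By \eqref{qv3} for $\alpha\in(1,2)$, and by \eqref{needlater} together with \eqref{first} for $\alpha\in(0,1)$, one has $J^{(1)}_{s,t}\le C_\alpha\,s\,r^\alpha$ when $r<1$ and $J^{(1)}_{s,t}\le C_\alpha\,s\,(r^\alpha+r^2)$ in general; moreover $J^{(2)}_{s,t}=t\int_0^r e^{-z}z^{\alpha-1}\,dz\ge t\,e^{-r}r^\alpha/\alpha$ for $r<1$ and $J^{(2)}_{s,t}\ge t\int_0^1 e^{-z}z^{\alpha-1}\,dz$ for $r\ge1$. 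When $r<1$ one has $t/e<s<t$, so the factor $r^\alpha$ cancels and the ratio is bounded by a constant depending only on $\alpha$; when $r\ge1$ one writes $s=t\,e^{-r}$ and uses that $e^{-r}r^\alpha$ and $e^{-r}r^2$ are bounded on $[1,\infty)$, so the factor $t$ cancels and the ratio is again controlled by a constant depending only on $\alpha$. As both bounds are independent of $T$, we obtain $V_m\ge(1+C_\alpha)^{-1}>0$ for every admissible configuration, whence $\liminf_{\epsilon\downarrow0}V_m>0$ and $(iii)$ holds.

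The conceptual heart, and the step I expect to be the most delicate to phrase rigorously, is the orthogonality observation bounding the conditional variance from below by the ``fresh'' energy $\tfrac{1}{\Gamma(\alpha)}J^{(2)}_{s,t}$ accumulated on the last subinterval $(t_{m-1},t_m)$; this relies essentially on the Volterra kernel of $B^\mathcal{H}_\alpha$ being supported on $(0,\tau)$. The remaining ratio estimate is elementary though case-heavy, and it is reassuring that the comparison $J^{(1)}_{s,t}\lesssim J^{(2)}_{s,t}$ needed here is a direct consequence of bounds already obtained in the H\"older and quasi-helix analysis.
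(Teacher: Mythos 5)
Your proof is correct and follows essentially the same route as the paper: both lower-bound the conditional variance by the ``fresh energy'' $\tfrac{1}{\Gamma(\alpha)}J^{(2)}_{s,t}$ accumulated on $(t_{m-1},t_m)$ (you via orthogonal projection in $L_2(\mathbb{R}^+)$ using the Volterra support of the kernel, the paper equivalently via the tower property with the Brownian filtration), and both then reduce condition $(iii)$ to the boundedness of $J^{(1)}_{s,t}/J^{(2)}_{s,t}$, established from \eqref{qv3}, \eqref{needlater}, \eqref{first} and the lower bound $J^{(2)}_{s,t}\ge s\,(\log(t/s))^\alpha/\alpha$. Your explicit split between $\log(t/s)<1$ and $\log(t/s)\ge1$ is in fact slightly more careful than the paper's treatment of the $\alpha\in(0,1)$ case, where the $s(\log(t/s))^2$ contribution from \eqref{first} is absorbed without comment.
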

\begin{proof} Since $B^\mathcal{H}_\alpha$ is a Gaussian process, its conditional variance  $$V_1:={\text{Var}}( B^\mathcal{H}_\alpha(t_m)- B^\mathcal{H}_\alpha(t_{m-1})| B^\mathcal{H}_\alpha(t_1), \ldots,  B^\mathcal{H}_\alpha(t_{m-1}))$$ is non-random and equals
$$V_1=\mathbb{E}( B^\mathcal{H}_\alpha(t_m)- B^\mathcal{H}_\alpha(t_{m-1}))^2-\mathbb{E}(\mathbb{E}( B^\mathcal{H}_\alpha(t_m)- B^\mathcal{H}_\alpha(t_{m-1})| B^\mathcal{H}_\alpha(t_1), \ldots,  B^\mathcal{H}_\alpha(t_{m-1})))^2.$$
 Furthermore, since, for any $t>0$,  $\sigma(B^\mathcal{H}_\alpha(s),\,0\le s\le t)\subset \sigma(B(s),\,0\le s\le t),$ then
 we can state that 
 \begin{eqnarray}&&
 \mathbb{E}( B^\mathcal{H}_\alpha(t_m)- B^\mathcal{H}_\alpha(t_{m-1})| B^\mathcal{H}_\alpha(t_1), \ldots,  B^\mathcal{H}_\alpha(t_{m-1})) \notag \\&=& \mathbb{E}( \mathbb{E}(B^\mathcal{H}_\alpha(t_m)- B^\mathcal{H}_\alpha(t_{m-1})| B_s, 0\le s\le t_{m-1})|B^\mathcal{H}_\alpha(t_1), \ldots,  B^\mathcal{H}_\alpha(t_{m-1})) \notag \\&=&\frac{1}{\sqrt{\Gamma(\alpha)}}\int_{0}^{t_{m-1}}\left(\left(\log\left(\frac{t_m}{u}\right)\right)^{\frac{\alpha-1}{2}}-\left(\log\left(\frac{t_{m-1}}{u}\right)\right)^{\frac{\alpha-1}{2}}\right)dB(u),  \notag\end{eqnarray}
      whence $$V_1=\frac{1}{\Gamma(\alpha)}\int_{t_{m-1}}^{t_m}  \left(\log\left(\frac{t_m}{u}\right)\right)^ {\alpha-1}du.$$
      Recall that $$V_2:={\text{Var}}(B^\mathcal{H}_\alpha(t_m)-B^\mathcal{H}_\alpha(t_{m-1}))=V_1+ V_3,$$ where $$V_3=\frac{1}{\Gamma(\alpha)}\int_{0}^{t_{m-1}}\left(\left(\log\left(\frac{t_m}{u}\right)\right)^{\frac{\alpha-1}{2}}-\left(\log\left(\frac{t_{m-1}}{u}\right)\right)^{\frac{\alpha-1}{2}}\right)^2du,$$
      and therefore, we have to prove that  $\frac{V_1}{V_2}=\frac{V_1}{V_1+V_3}$ is separated from zero, for which  it is sufficient to prove that $\frac{V_3}{V_1}$ is bounded. Now consider two cases. If $\alpha\in(1.2)$, then,   according to \eqref{qv3} with $s=t_{m-1}$ and $t=t_m,$  $$V_3\le \frac{K_\alpha}{\Gamma(\alpha)} t_{m-1}\left(\log \frac{t_m}{t_{m-1}}  \right)^\alpha,$$ while 
      \begin{equation}\begin{gathered}\label{lowerbo}V_1= \frac{1}{\Gamma(\alpha)}\int_0^{\log(t_m/t_{m-1})}z^{\alpha-1}e^{-z}d\ge  \frac{1}{ \Gamma(\alpha+1)}t_m\left(\log \frac{t_m}{t_{m-1}}  \right)^\alpha e^{-\log(t_m/t_{m-1}})\\=\frac{1}{ \Gamma^2(\alpha+1)}t_{m-1}\left(\log \frac{t_m}{t_{m-1}}  \right)^\alpha,\end{gathered}\end{equation}
      and the proof immediately follows. Let $\alpha\in(0,1).$ Then we can get the same (up to a constant multiplier) upper bound for $V_3$ from \eqref{needlater} and  \eqref{first}, while the lower bound \eqref{lowerbo} for $V_1$ is good for $\alpha\in(0,1) $ as well, so that the theorem is proved. 
\end{proof}

\section{The integral with respect to the $\mathcal{H}$-fBm}\label{integral}

\subsection{Preliminary results}


We will make use of the following properties holding for the Mellin transform, defined as  $\mathfrak{M}_s(f):=\int_0^\infty x^{s-1}f(x)dx$, for $s \in \mathbb{C}$.

\begin{lem}\label{pars}(\textit{Parseval formula for Mellin transform} (\cite{TIT}))

Let $\mathfrak{M}_s(f_i)$ be the Mellin transform of the function $f_i:\mathbb{R}^+ \to \mathbb{R}$, with strip of holomorphy $S_{f_i}:=\left\{s:\eta_{i}<\Re(s)<\eta'_i\right\}$
, $i=1,2$. Let moreover $x^{c-1/2}f_1(x)\in L_2(\mathbb{R}^+)$, for $c \in \mathbb{R}$, with $\eta_1 <c<\eta'_1$. Then, if $x^{s_0-c-1/2}f_2(x)\in L_2(\mathbb{R}^+)$, for $s_0 \in \mathbb{C},$ with $\eta_2<\Re(s_0)-c<\eta_2'$, we have that
\begin{equation}
\int_0^{\infty}f_1(t)f_2(t)t^{s_0 -1}dt=\frac{1}{2 \pi i}\int_{c-i \infty}^{c+i \infty}\mathfrak{M}_s(f_1)\mathfrak{M}_{s_0-s}(f_2)ds,
\end{equation}
where the last integral is on the vertical line $\Re(s)=c.$
\end{lem}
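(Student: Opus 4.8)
The plan is to reduce the Mellin transform to the ordinary Fourier transform via the exponential change of variables and then invoke the classical Plancherel--Parseval identity for Fourier transforms, citing the Fourier-analytic version from \cite{TIT} rather than reproving it. Concretely, I would substitute $x=e^{-u}$, under which the vertical line $\Re(s)=c$ is parametrized by $s=c+it$, $t\in\mathbb{R}$, and the Mellin transform of $f_1$ becomes a Fourier transform,
$$\mathfrak{M}_{c+it}(f_1)=\int_{-\infty}^{\infty}e^{-itu}g_1(u)\,du,\qquad g_1(u):=e^{-cu}f_1(e^{-u}).$$
A direct change of variables gives $\int_{\mathbb{R}}|g_1(u)|^2\,du=\int_0^{\infty}|x^{c-1/2}f_1(x)|^2\,dx$, so the hypothesis $x^{c-1/2}f_1(x)\in L_2(\mathbb{R}^+)$ is exactly the statement $g_1\in L_2(\mathbb{R})$; hence $\mathfrak{M}_{c+i\,\cdot}(f_1)$ is the Fourier transform of an $L_2$ function.

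Next I would treat $f_2$, keeping track of the complex shift. Writing $s_0=\sigma_0+i\tau_0$ and again $s=c+it$, one has $s_0-s=(\sigma_0-c)+i(\tau_0-t)$, so
$$\mathfrak{M}_{s_0-s}(f_2)=\int_{-\infty}^{\infty}e^{itu}h(u)\,du,\qquad h(u):=e^{-(\sigma_0-c)u}e^{-i\tau_0 u}f_2(e^{-u}).$$
The hypothesis $x^{\Re(s_0)-c-1/2}f_2(x)\in L_2(\mathbb{R}^+)$ yields $h\in L_2(\mathbb{R})$, since the modulation $e^{-i\tau_0 u}$ does not affect the modulus, and $\mathfrak{M}_{s_0-s}(f_2)$ is precisely the Fourier transform of the reflected function $u\mapsto h(-u)$ evaluated at $t$. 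Here the strip-of-holomorphy conditions $\eta_1<c<\eta_1'$ and $\eta_2<\Re(s_0)-c<\eta_2'$ guarantee that both Mellin transforms are genuinely defined (as $L_2$ boundary values) on the relevant vertical lines, so that the two Fourier-transform representations are legitimate.

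I would then rewrite the right-hand side using $ds=i\,dt$, so that $\frac{1}{2\pi i}\int_{c-i\infty}^{c+i\infty}(\cdot)\,ds=\frac{1}{2\pi}\int_{\mathbb{R}}(\cdot)\,dt$, and apply Parseval's theorem for the Fourier transform in its multiplicative form $\int_{\mathbb{R}}\widehat{\phi}(t)\widehat{\psi}(t)\,dt=2\pi\int_{\mathbb{R}}\phi(u)\psi(-u)\,du$, with $\phi=g_1$ and $\psi=h(-\cdot)$. This collapses the contour integral to $\int_{\mathbb{R}}g_1(u)h(u)\,du$. Since the exponents combine as $g_1(u)h(u)=e^{-s_0 u}f_1(e^{-u})f_2(e^{-u})$, undoing the substitution $x=e^{-u}$ (with $du=-dx/x$ and the limits reversed) returns exactly $\int_0^{\infty}f_1(x)f_2(x)x^{s_0-1}\,dx$, which is the left-hand side, with all constants matching and no stray factors.

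The main obstacle is analytic rather than algebraic: one must rigorously confirm that $g_1$ and $h$ lie in $L_2(\mathbb{R})$ from the stated weighted-$L_2$ hypotheses, and that the Parseval identity genuinely applies, i.e.\ that the two Fourier transforms are $L_2$ functions whose product is integrable along $\Re(s)=c$. The latter follows from the Cauchy--Schwarz inequality together with the Plancherel isometry, and the interchange of the order of integration implicit in the Parseval step is justified precisely by this $L_2$ membership. Once these integrability points are settled, the remainder is the routine bookkeeping of the two changes of variables and the verification that the weighted-$L_2$ conditions on $f_1,f_2$ correspond to $g_1,h\in L_2(\mathbb{R})$.
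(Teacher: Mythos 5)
Your argument is correct: the exponential substitution $x=e^{-u}$ turns the weighted-$L_2$ hypotheses into exactly $g_1,h\in L_2(\mathbb{R})$, the two Mellin transforms on $\Re(s)=c$ become Fourier transforms of these functions, and the polarized Plancherel (multiplication) formula then yields the identity with all constants matching. The paper itself gives no proof of this lemma --- it is quoted from Titchmarsh --- and your derivation is precisely the standard one underlying that reference, so there is nothing to fault beyond noting that the integrability of $\mathfrak{M}_s(f_1)\mathfrak{M}_{s_0-s}(f_2)$ along the line and of $f_1f_2t^{s_0-1}$ on $(0,\infty)$ both follow from Cauchy--Schwarz plus the Plancherel isometry, as you indicate.
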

\begin{lem}\label{planch}(\textit{Plancherel formula for Mellin transform} (\cite{TIT}))

Let $\mathfrak{M}_s(f)$ be the Mellin transform of the function $f:\mathbb{R}^+ \to \mathbb{R}$, with strip of holomorphy $S_{f}:=\left\{s:\eta<\Re(s)<\eta'\right\}$. Then, if $x^{c-1/2}f(x)\in L_2(\mathbb{R}^+)$, for $c \in \mathbb{R},$ with $\eta<c<\eta'$, we have that
\begin{equation}
    \int_0^{\infty}|f(x)|^2x^{2c-1}dx=\frac{1}{2 \pi}\int_{- \infty}^{+ \infty}\left\vert \mathfrak{M}_{c+it}(f)\right\vert^2 dt. \label{pla}
\end{equation}

\end{lem}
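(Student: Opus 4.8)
The plan is to reduce the statement to the classical Plancherel theorem for the Fourier transform on $L_2(\mathbb{R})$ by means of the exponential substitution that converts the multiplicative structure of the Mellin transform into the additive structure of the Fourier transform.

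First I would set $x = e^u$, so that $u$ ranges over all of $\mathbb{R}$ as $x$ ranges over $(0,\infty)$, and introduce the auxiliary function
\[
g(u) := e^{cu} f(e^u), \qquad u \in \mathbb{R}.
\]
With this substitution a direct computation gives
\[
\int_{-\infty}^{+\infty} |g(u)|^2 \, du = \int_{-\infty}^{+\infty} e^{2cu} |f(e^u)|^2 \, du = \int_0^{\infty} x^{2c-1} |f(x)|^2 \, dx,
\]
so the hypothesis $x^{c-1/2} f(x) \in L_2(\mathbb{R}^+)$ is exactly the statement that $g \in L_2(\mathbb{R})$, and the left-hand side of \eqref{pla} equals $\|g\|_{L_2(\mathbb{R})}^2$.

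Next I would identify the Mellin transform restricted to the vertical line $\Re(s)=c$ with the Fourier transform of $g$. Writing $s = c+it$ and again substituting $x = e^u$,
\[
\mathfrak{M}_{c+it}(f) = \int_0^{\infty} x^{c+it-1} f(x)\, dx = \int_{-\infty}^{+\infty} e^{(c+it)u} f(e^u)\, du = \int_{-\infty}^{+\infty} g(u)\, e^{itu} \, du,
\]
which is precisely the Fourier transform $\widehat{g}(t)$ of $g$ (with the convention $\widehat{g}(t)=\int g(u)e^{itu}\,du$). Since $g$ is in general only square-integrable and not integrable, this identity, and the transform itself, must be read in the $L_2$ sense, as the mean-square limit of the truncated integrals $\int_{-N}^{N}$; the assumption that $c$ lies strictly inside the strip of holomorphy $\{\eta<\Re(s)<\eta'\}$ guarantees that $\mathfrak{M}_s(f)$ is well defined and analytic in a neighbourhood of the line $\Re(s)=c$, so that this identification is legitimate.

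The conclusion then follows by applying the classical $L_2$ Plancherel theorem for the Fourier transform, namely $\|g\|_{L_2}^2 = \frac{1}{2\pi}\|\widehat{g}\|_{L_2}^2$, and rewriting both sides through the two identities above:
\[
\int_0^{\infty} |f(x)|^2 x^{2c-1}\, dx = \|g\|_{L_2(\mathbb{R})}^2 = \frac{1}{2\pi} \int_{-\infty}^{+\infty} |\widehat{g}(t)|^2 \, dt = \frac{1}{2\pi}\int_{-\infty}^{+\infty} \left| \mathfrak{M}_{c+it}(f)\right|^2 dt.
\]
The main obstacle I anticipate is not the computation, which is routine, but the careful justification that the Mellin integral on the critical line coincides with the $L_2$-Fourier transform of $g$ when $g \notin L_1(\mathbb{R})$: one must pass from the a priori pointwise definition of $\mathfrak{M}_s(f)$ inside the strip to its boundary interpretation as a square-mean limit, and it is precisely here that the strip-of-holomorphy hypothesis does its work.
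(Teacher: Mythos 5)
Your proof is correct, and it is essentially the canonical argument: the paper does not prove this lemma at all but simply cites Titchmarsh, and the substitution $x=e^u$ reducing the Mellin--Plancherel identity to the classical Fourier--Plancherel theorem for $g(u)=e^{cu}f(e^u)$ is exactly the argument underlying that reference. Your remark about interpreting $\mathfrak{M}_{c+it}(f)$ as the $L_2$-limit of truncated integrals when $g\notin L_1(\mathbb{R})$ is the right point of care (and in fact, when $c$ lies strictly inside the strip of absolute convergence, $g\in L_1\cap L_2$ and the pointwise and $L_2$ definitions agree directly).
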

We recall that the Mellin transform of the Hadamard fractional integral $_{\mathcal{H}}\mathcal{I}_{-}^{\beta }$ is given by $\mathfrak{M}_s\left( _{\mathcal{H}}\mathcal{I}_{-}^{\beta }f\right)=s^{-\beta}\mathfrak{M}_s\left( f \right)$, while, for the fractional derivative, we have that  $\mathfrak{M}_s\left( _{\mathcal{H}}\mathcal{D}_{-}^{\beta }f\right)=s^{\beta}\mathfrak{M}_s\left( f \right)$, for $\Re(s)>0$ (see \cite{KIL}, p.119-120).

We will refer to the following space: let $f:\mathbb{R}^+ \to \mathbb{R}$ have strip of holomorphy $S_{f} $ such that $1/2 \in \mathcal{S}_f$; then $x^{c-1/2}f(x)\in L_2(\mathbb{R}^+)$, for $c=1/2$, if $f \in L_2(\mathbb{R}^+)$. We define, for $\beta>0,$
\[\mathfrak{F}_{\beta}:=\left\{f \in L_2(\mathbb{R}^+), f :\mathbb{R}^+ \to \mathbb{R}\left\vert \frac{1}{ 2\pi }\int^{+ \infty}_{- \infty}\left\vert\mathfrak{M}_{1/2+it}(f) \right\vert^2 \left(\frac{1}{4}+t^2\right)^\beta  dt < \infty \right.\right\},\]
with the norm \begin{eqnarray}
\Vert f\Vert^2_{\mathfrak{F}_{\beta}}&:=&\frac{1}{ 2\pi }\int^{+ \infty}_{- \infty}\left\vert\mathfrak{M}_{1/2+it}(f) \right\vert^2 \left(\frac{1}{4}+t^2\right)^\beta  dt =\frac{1}{ 2\pi }\int^{+ \infty}_{- \infty}\left\vert\mathfrak{M}_{1/2+it}(f) \right\vert^2 \left\vert\frac{1}{2}+it\right\vert^{2\beta}  dt \notag \\
 &=&\frac{1}{ 2\pi }\int^{+ \infty}_{- \infty}\left\vert\mathfrak{M}_{1/2+it}\left(_{\mathcal{H}}\mathcal{D}_{-}^{\beta }f\right) \right\vert^2   dt =\int_0^\infty \left( _{\mathcal{H}}\mathcal{D}_{-}^{\beta }f\right)^2(t)dt, \label{fspace}\end{eqnarray} 
where, in the last step, we applied Lemma \ref{planch}, with $c=1/2$. It follows from \eqref{fspace} that $\mathfrak{F}_{\beta}$ is a normed linear space,
endowed by the inner product
\begin{eqnarray}
\langle f_1,f_2\rangle_{\mathfrak{F}_{\beta}}&:=& \frac{1}{ 2\pi i}\int^{1/2+i \infty}_{1/2-i \infty}\mathfrak{M}_s\left(f_1\right)  \mathfrak{M}_{1-s}\left(f_2\right) s^\beta (1-s)^{\beta}ds\\
&=&
\frac{1}{ 2\pi i}\int^{1/2+i \infty}_{1/2-i \infty}\mathfrak{M}_s\left(_{\mathcal{H}}\mathcal{D}_{-}^{\beta }f_1\right)  \mathfrak{M}_{1-s}\left(_{\mathcal{H}}\mathcal{D}_{-}^{\beta }f_2\right) ds 
=\int_0^{+\infty}\left(_{\mathcal{H}}\mathcal{D}_{-}^{\beta }f_1\right)(x)\left(_{\mathcal{H}}\mathcal{D}_{-}^{\beta }f_2\right)(x)dx \notag
\end{eqnarray}
where, in the last step, we have applied Lemma \ref{pars}, with $c=1/2$ and $s_0=1$.
Hereafter, we will refer to $\mathfrak{F}_{\beta}$, in the case $\beta =(1-\alpha)/2$, for $\alpha \in (0,1)$, i.e. with $\beta \in (0,1/2)$. 

We then prove the following preliminary result, where we refer to step (or elementary) functions defined as $f^*(x):=\sum_{j=1}^{k}c_j 1_{A_j}(x),$ $x \in \mathbb{R},$ $A_j \in \mathcal{B}(\mathbb{R})$ and $c_j \in \mathbb{R}$, for $k \in \mathbb{N}$.

\begin{lem}\label{lemstep}
 For any function $g: \mathbb{R}^+ \to \mathbb{R}$, such that $g \in \mathfrak{F}_{\beta}$, for $\beta \in (0,1/2)$, there exists a sequence of step functions $\left\{g^*_n\right\}_{n \geq 1}$, with $g^*_n \in \mathfrak{F}_{\beta}$, for any $n$, such that
\begin{equation}\Vert g-  g^*_n \Vert_{\mathfrak{F}_{\beta}}\to 0, \qquad n \to \infty.\label{step3}   \end{equation}    
\end{lem}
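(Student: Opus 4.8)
The plan is to approximate $g$ in the $\mathfrak{F}_\beta$-norm by exploiting the isometry recorded in \eqref{fspace}: since $\Vert g-g^*_n\Vert_{\mathfrak{F}_\beta}^2=\int_0^\infty\bigl(\,_{\mathcal{H}}\mathcal{D}_{-}^{\beta}(g-g^*_n)\bigr)^2(t)\,dt=\Vert\,_{\mathcal{H}}\mathcal{D}_{-}^{\beta}g-\,_{\mathcal{H}}\mathcal{D}_{-}^{\beta}g^*_n\Vert_{L_2(\mathbb{R}^+)}^2$, approximation in $\mathfrak{F}_\beta$ is equivalent to $L_2(\mathbb{R}^+)$-approximation of $\,_{\mathcal{H}}\mathcal{D}_{-}^{\beta}g$ by the images $\,_{\mathcal{H}}\mathcal{D}_{-}^{\beta}g^*_n$ of step functions under the Hadamard derivative. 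First I would set $h:=\,_{\mathcal{H}}\mathcal{D}_{-}^{\beta}g\in L_2(\mathbb{R}^+)$, which is well-defined precisely because $g\in\mathfrak{F}_\beta$. Since step functions are dense in $L_2(\mathbb{R}^+)$, I can pick step functions $\varphi_n$ with $\Vert h-\varphi_n\Vert_{L_2(\mathbb{R}^+)}\to 0$.

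The natural candidate is then $g^*_n:=\,_{\mathcal{H}}\mathcal{I}_{-}^{\beta}\varphi_n$, so that, by Lemma~\ref{leminverse} (applied on the generators $1_{[0,t)}$ and extended by linearity), $\,_{\mathcal{H}}\mathcal{D}_{-}^{\beta}g^*_n=\varphi_n$ and hence
\[
\Vert g-g^*_n\Vert_{\mathfrak{F}_\beta}=\Vert h-\varphi_n\Vert_{L_2(\mathbb{R}^+)}\to 0.
\]
Two points require care here. The first is membership: I must verify $g^*_n\in\mathfrak{F}_\beta$, i.e.\ that $g^*_n\in L_2(\mathbb{R}^+)$ and that $\,_{\mathcal{H}}\mathcal{D}_{-}^{\beta}g^*_n\in L_2(\mathbb{R}^+)$. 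The latter is immediate since it equals $\varphi_n\in L_2(\mathbb{R}^+)$; for the former I would invoke the Hardy--Littlewood bound of Lemma~\ref{lemHL} (with $p=2$), which gives $\Vert\,_{\mathcal{H}}\mathcal{I}_{-}^{\beta}\varphi_n\Vert_{\mathcal{X}^2_\gamma}\le K\Vert\varphi_n\Vert_{\mathcal{X}^2_\gamma}$, using $\mathcal{X}^2_{1/2}=L_2(\mathbb{R}^+)$. The second point is that $g^*_n=\,_{\mathcal{H}}\mathcal{I}_{-}^{\beta}\varphi_n$ is a priori \emph{not} itself a step function; it is a finite linear combination of logarithmic power functions of the form $\bigl(\log\frac{b}{x}\bigr)_+^{\beta}$, cf.\ \eqref{had6}.

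Thus the genuine obstacle is to replace these $\,_{\mathcal{H}}\mathcal{I}_{-}^{\beta}\varphi_n$ by \emph{honest} step functions without losing the convergence. I would handle this by a second approximation: each $g^*_n\in L_2(\mathbb{R}^+)$ can itself be approximated in $L_2(\mathbb{R}^+)$ by a step function, but since $\mathfrak{F}_\beta$ is a strictly finer space, $L_2$-closeness of the functions is not enough---one needs closeness of their Hadamard derivatives. The clean way around this is to run the argument symmetrically: rather than insisting the $g^*_n$ be step functions, I would take $\varphi_n$ to be step functions and define the approximants directly as their Hadamard integrals, observing that formula \eqref{had6} expresses $\,_{\mathcal{H}}\mathcal{I}_{-}^{\beta}1_{[a,b)}$ explicitly, so the linear span of $\{\,_{\mathcal{H}}\mathcal{I}_{-}^{\beta}1_{[a,b)}\}$ consists of the required elementary logarithmic building blocks; these are the functions that the later density statement (the span of $\{\,_{\mathcal{H}}\mathcal{M}^\alpha_-1_{(a,b)}\}$) is phrased in terms of. If the lemma indeed demands literal step functions $g^*_n$, the resolution is that one proves density of step functions in the \emph{graph norm} $f\mapsto(\Vert f\Vert_{L_2}^2+\Vert\,_{\mathcal{H}}\mathcal{D}_{-}^{\beta}f\Vert_{L_2}^2)^{1/2}$, which dominates $\Vert\cdot\Vert_{\mathfrak{F}_\beta}$; concretely, for $f=1_{[a,b)}$ one checks by direct computation (using \eqref{had4}) that $\,_{\mathcal{H}}\mathcal{D}_{-}^{\beta}1_{[a,b)}\in L_2(\mathbb{R}^+)$, so step functions lie in $\mathfrak{F}_\beta$, and then one shows that finite linear combinations of indicators $1_{[a,b)}$ are dense in $\mathfrak{F}_\beta$ by a standard mollification/monotone-class argument applied on the Mellin side, where the weight $(\tfrac14+t^2)^\beta$ with $\beta\in(0,1/2)$ grows slowly enough that the truncation errors remain controlled. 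I expect this last density step---transferring $L_2$-density of step functions to density in the heavier weighted Mellin norm---to be the main technical hurdle.
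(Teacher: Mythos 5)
Your reduction is sound as far as it goes: the isometry \eqref{fspace} correctly converts the problem into approximating $h:={}_{\mathcal{H}}\mathcal{D}_{-}^{\beta}g$ in $L_2(\mathbb{R}^+)$ by Hadamard derivatives of step functions, and you correctly identify that ${}_{\mathcal{H}}\mathcal{I}_{-}^{\beta}\varphi_n$ is not a step function, so that the real content of the lemma is the density of \emph{honest} step functions in $\mathfrak{F}_\beta$. But that is precisely where your argument stops: you assert this density via ``a standard mollification/monotone-class argument applied on the Mellin side,'' and no such standard argument exists. Mollification produces smooth approximants, not step functions, and a monotone-class argument only yields $L_2$-density; neither controls the weighted Mellin norm $\int|\mathfrak{M}_{1/2+it}(\cdot)|^2(\tfrac14+t^2)^\beta dt$, which is a fractional-Sobolev-type norm in which indicator functions barely belong (their Mellin transforms decay like $|t|^{-1}$, so membership fails for $\beta\geq 1/2$). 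The restriction $\beta\in(0,1/2)$ is exactly what makes the density true but delicate, and your proposal leaves this — which you yourself call ``the main technical hurdle'' — unproven. Note also that completing your route by invoking density of the span of $\{{}_{\mathcal{H}}\mathcal{D}_{-}^{\beta}1_{[a,b)}\}$ in $L_2(\mathbb{R}^+)$ would be circular relative to the paper's architecture, since that is the content of Theorem \ref{thmden}(i), which is proved \emph{from} this lemma.

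The paper's proof consists entirely of filling this gap. It substitutes $f(y):=e^{-y/2}g(e^y)$ to convert the Mellin norm on $\mathbb{R}^+$ into a Fourier norm on $\mathbb{R}$ with weight $(\tfrac14+y^2)^\beta$, then adapts Lemma 5.1 of Pipiras and Taqqu \cite{PIP}: one approximates the target on the Fourier side by partial sums of a Fourier series built from the Dirichlet kernel $D_m$, uses the uniform bounds $\sup_m\sup_t|U_m(t)|\leq C$ and $|U_m(t)|\leq C|t|$ to apply dominated convergence against the finite measure $\lambda_\beta(dt)=(\tfrac14+t^2)^\beta|t|^{-2}dt$, and identifies the resulting approximant as the transform of a genuine step function $l^*$. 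Even then one is left with $\sqrt{x}\,l^*(\log x)$ rather than a step function in $x$, and the paper closes this last gap by an explicit Mellin computation showing $\Vert\sqrt{\cdot}\,1_{[a,b]}-\varphi_n^*\Vert_{\mathfrak{F}_\beta}\to 0$ for $\varphi_n^*=\sum_k\sqrt{x_k}1_{[x_{k-1},x_k]}$. None of these steps is present in your proposal, so as written it has a genuine gap rather than being an alternative proof.
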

\begin{proof}
We resort to the following result proved in \cite{PIP}, Lemma 5.1 (see also \cite{PIP2}), for the space
\[
\mathcal{F}_{\beta}:=\left\{f \in L_2(\mathbb{R}), f :\mathbb{R} \to \mathbb{R}\left\vert \int^{+ \infty}_{- \infty}\left\vert\tilde{f}(y) \right\vert^2 y^{2\beta}  dy < \infty\right. \right\},
\]
where $\tilde{f}(y):=\int_{-\infty}^{+\infty}e^{iyx}f(x)dx$, for $y \in \mathbb{R}$ and $\beta \in (0,1/2)$: for any function $f \in \mathcal{F}_{\beta}$ there exists a sequence of elementary functions $\left\{f^*_n\right\}_{n \geq 1}\in L_2(\mathbb{R}),$ such that $\Vert f^*_n - f\Vert_{\mathcal{F}_{\beta}}\to 0$, as $n \to \infty$, with the norm $\Vert f\Vert^2_{\mathcal{F}_\beta} =\int^{+ \infty}_{- \infty}|\tilde{f}(y) |^2 y^{2\beta}  dy$. We show that it holds also for the (slightly modified) space 
\[
\mathcal{F}'_{\beta}:=\left\{f \in L_2(\mathbb{R}), f :\mathbb{R} \to \mathbb{R}\left\vert \int^{+ \infty}_{- \infty}\left\vert\tilde{f}(y) \right\vert^2 \left(\frac{1}{4}+y^{2}\right)^\beta  dy < \infty\right. \right\},
\]
by considering that the measure $\lambda_\beta(dx):=|1/4+x^2|^{\beta}|x|^{-2}dx$, is finite around $|x|=\infty$, for $\beta \in (0,1/2)$. For any $g:\mathbb{R}^+ \to \mathbb{R}$ with $g \in \mathfrak{F}_{\beta}$ we rewrite \eqref{fspace} as follows
\begin{equation}
\Vert g\Vert^2_{\mathfrak{F}_{\beta}}=\frac{1}{ 2\pi }\int^{+ \infty}_{- \infty}\left\vert\mathfrak{M}_{1/2+it}(g) \right\vert^2 \left(\frac{1}{4}+t^2\right)^\beta  dt =\frac{1}{ 2\pi }\int^{+ \infty}_{- \infty}\left\vert\int_{-\infty}^{+\infty}e^{ity-y/2}g(e^y))dy \right\vert^2 \left(\frac{1}{4}+t^2\right)^\beta   dt.\notag\end{equation}
Then, if we denote $f(y):=e^{-y/2}g(e^y)$, for $y \in \mathbb{R}$, then $f \in \mathcal{F}'_\beta$ and we prove that there exists a sequence of step functions approximating $f$ in $ \mathcal{F}'_{\beta}$. As in the proof of the above mentioned Lemma 5.1 in \cite{PIP}, we can consider first the case where $f(\cdot)$ is even (i.e. when $g(e^{-y})=e^{-y}g(e^y),$ $\omega \in \mathbb{R}$) and thus $\tilde{f}(t)=\int_{-\infty}^{+\infty}e^{ity-y/2}g(e^y)dy$ is real-valued and even; let $k>1$ such that, for $\epsilon>0,$ \begin{equation}\int_\mathbb{R}\left(\frac{1}{4}+t^2\right)^\beta |t|^{-2}1_{\left\{|t|>k\right\}}dt<\epsilon^2/2.\label{tog2}\end{equation}

 Then,  proving that there exists a step function $f^*(\cdot)$ in $\mathcal{F}'_\beta$ such that it is $\Vert f^* - f\Vert_{\mathcal{F}'_{\beta}}< \epsilon$ reduces to constructing a step function $l^*:\mathbb{R} \to \mathbb{R}$, belonging to $\mathcal{F}'_\beta$, for which \begin{equation}
\int^{+ \infty}_{- \infty} \left\vert1_{[-1,1]}(t)-\tilde{l^*}(t)\right\vert^2\left(\frac{1}{4}+t^2\right)^\beta dt=\int^{+ \infty}_{- \infty} \left\vert t1_{[-1,1]}(t)-t\tilde{l^*}(t)\right\vert^2\lambda_\beta(dt)<\epsilon^2.\label{um2}\end{equation} We define the function $U:\mathbb{R} \to \mathbb{R}$ which equals $t1_{[-1,1]}(t)$ on $[-k,k]$ and is periodically extended for $t \in \mathbb{R}.$ It has Fourier series expansion $\sum_{n=-\infty}^{+\infty}
u_ne^{i\pi n t/k}$, which converges to
$U$ everywhere on $[-k, k]$, except at the points $t = \pm 1$ where $U$ is discontinuous. Moreover, the partial sum  $U_m(t):= \sum_{n=-m}^{m}
u_ne^{i\pi n t/k}$ can be expressed as
\begin{equation}U_m(t)=\frac{1}{k}\int_{-k}^kU(t-y)D_m(\pi y/k)dy, \label{um}\end{equation}
where $D_m(y):=\sin(m+1/2)y/(2\sin(y/2))$ is the Dirichlet kernel. As proved in \cite{PIP}, $U_m$ given in \eqref{um} satisfies the following conditions: $(i)$ $\sup_m \sup_t |U_m(t)| \leq const.$ and $(ii)$ $\sup_m  |U_m(t)| \leq |t| const$. Thus, we can apply the dominated convergence theorem, so that
there exists an integer $M$ such that
\begin{equation}\int^{+ \infty}_{- \infty} \left\vert t1_{[-1,1]}(t)-U_M(t)\right\vert^2\lambda_\beta(dt)< \epsilon^2/2.\label{tog}\end{equation}
By recalling that $x^{-1}U_M(x)$ is the Fourier transform of the step function $l^*(x):=\sum_{m=1}^M a_n 1_{[-\pi n/k,\pi n/k]}(x)$, where $a_n \in \mathbb{R}$ are given by $a_n:= i u_n sign(n)$, for $|n| \geq 1$,
  we get the final result \eqref{um2}, since we can write
\begin{eqnarray}
&&\int^{+ \infty}_{- \infty} \left\vert1_{[-1,1]}(t)-\tilde{l^*}(t)\right\vert^2\left(\frac{1}{4}+t^2\right)^\beta dt=\int^{+ \infty}_{- \infty} \left\vert t1_{[-1,1]}(t)-t\tilde{l^*}(t)\right\vert^2\lambda_\beta(dt) \notag \\
&\leq&\int_{|t| \leq k} \left\vert t1_{[-1,1]}(t)-t\tilde{l^*}(t)\right\vert^2\lambda_\beta(dt)+\int_{|t| > k} \lambda_\beta(dx) <\epsilon^2, \label{ind1}
\end{eqnarray}
by considering together \eqref{tog2} and \eqref{tog}. The case of odd $f(\cdot)$ can be treated analogously (see \cite{PIP} for details). Then, by denoting $
g^+(x):=\sqrt{x}l^*(\log(x))=\sqrt{x}\sum_{n=1}^M a_n 1_{[e^{-\pi n/k},e^{\pi n/k}]}(x)$ , $x \in \mathbb{R}^+$, we can write that,
for any $\epsilon>0,$
\[\Vert g^+ - g\Vert_{\mathfrak{F}_{\beta}}=\Vert \sqrt{\cdot}[l^*(\log \cdot) - f(\log \cdot)]\Vert_{\mathfrak{F}_{\beta}}=2 \pi\Vert l^* - f\Vert_{\mathcal{F}'_{\beta}}<\epsilon.\] 
It remains to prove that there exists a sequence of step functions $\left\{\varphi_n^*\right\}_{n \geq 1} \in \mathfrak{F}_{\beta}$ that approximates $g^+$ in $\Vert\cdot\Vert_{\mathfrak{F}_{\beta}}$ or, for the sake of simplicity, that, for $0< a<b$, $\Vert \sqrt{\cdot} 1_{[a,b]}(\cdot)-\varphi_n^*(\cdot)\Vert^2_{\mathfrak{F}_{\beta}} \to 0$, as $n \to \infty$.  Let us define, for any $n \in \mathbb{N}$, $\varphi_n^*:=\sum_{k=1}^n\sqrt{x_k}1_{[x_{k-1},x_k]}$, where $x_k:=a+k(b-a)/n$, for $k=1,2,...,n$. Then, we can write that, for any $n$,
\begin{eqnarray}
   && \Vert \sqrt{\cdot} 1_{[a,b]}(\cdot)-\varphi_n^*(\cdot)\Vert^2_{\mathfrak{F}_{\beta}}=\frac{1}{ 2\pi }\int^{+ \infty}_{- \infty}\left\vert\mathfrak{M}_{1/2+it}(\sqrt{\cdot} 1_{[a,b]}(\cdot)-\varphi_n^*(\cdot)) \right\vert^2 \left(\frac{1}{4}+t^2\right)^\beta  dt \notag \\
    &=& \frac{1}{ 2\pi }\int^{+ \infty}_{- \infty}\left\vert\sum_{k=1}^n\int_{x_{k-1}}^{x_k} \omega^{it-1/2}(\sqrt{\omega}-\sqrt{x_k})d\omega \right\vert^2 \left(\frac{1}{4}+t^2\right)^\beta  dt \notag \\
  &=&\frac{1}{ 2\pi }\int^{+ \infty}_{- \infty}
    \left\vert \sum_{k=1}^n\left[\frac{\omega^{it+1/2}}{it+1/2}(\sqrt{\omega}-\sqrt{x_k}) \right]_{x_{k-1}}^{x_k}-\frac{1}{2it+1}\int_a^{b} \omega^{it} d\omega \right\vert^2 \left(\frac{1}{4}+t^2\right)^\beta  dt \notag \\
     &=&\frac{1}{ 2\pi }\int^{+ \infty}_{- \infty}
    \left\vert \sum_{k=1}^n x_{k-1}^{it+1/2}\frac{\sqrt{x_k}-\sqrt{x_{k-1}}}{it+1/2} -\frac{1}{2}\frac{ b^{it+1}-a^{it+1}}{(it+1/2)(it+1)} \right\vert^2 \left(\frac{1}{4}+t^2\right)^{\beta}  dt \notag\\
    &\leq&\frac{1}{ 2\pi }\int^{+ \infty}_{- \infty}
    \left[ b+\sqrt{ab} +\frac{1}{2}\frac{ b+a}{1+t^2} \right]^2 \left(\frac{1}{4}+t^2\right)^{\beta-1}  dt \leq \frac{1}{ 2\pi }\int^{+ \infty}_{- \infty}
    \left[ 2b+\frac{ b}{1+t^2} \right]^2 \left(\frac{1}{4}+t^2\right)^{\beta-1}  dt. \notag
\end{eqnarray}
The last integral is clearly finite, so that, by using the dominated convergence theorem and by noting that $\left\vert\mathfrak{M}_{1/2+it}(\sqrt{\cdot} 1_{[a,b]}(\cdot)-\varphi_n^*(\cdot)) \right\vert^2 \left(\frac{1}{4}+t^2\right)^\beta$ converges to zero pointwise in $t$, as $n \to \infty$, the result follows.
\end{proof}


Our aim here is to introduce the Wiener integral of a function, with respect to the $\mathcal{H}$-fBm.

Let us define the inner product on the space $L_{2,\alpha}(\mathbb{R}^+):=\left\{ f: \;  _\mathcal{H}\mathcal{M}_-^\alpha f \in L_2(\mathbb{R}^+) \right\}$ as 
\begin{equation}\label{ip}\langle f,g \rangle_{L_{2,\alpha}}:=\int_0^\infty {_{\mathcal{H}}\mathcal{M}_-^\alpha} f(x) {_{\mathcal{H}}\mathcal{M}_-^\alpha} g(x) dx= \langle \; _\mathcal{H}\mathcal{M}_-^\alpha f, \; _\mathcal{H}\mathcal{M}_-^\alpha g\rangle_{L_{2}},\end{equation}
where we omitted in the subscripts, for brevity, the indication of $\mathbb{R}^+$, for any $\alpha \geq 0.$ Note that, for $\alpha \in (0,1)$, thanks to \eqref{fspace},  the space   $L_{2,\alpha}(\mathbb{R}^+)$ coincides with $\mathfrak{F}_{(1-\alpha)/2}$.
\subsection{Construction of the Wiener integral w.r.t. $\mathcal{H}$-fBm}\label{integinteg}
Recalling the compact interval representation  \eqref{mvn} 
  of the $\mathcal{H}$-fBm, as the Wiener integral w.r.t a Wiener process with a Volterra kernel,  we give the following definition. 
 
\begin{definition}\label{defibeta}
    Let $\Vert f \Vert^2_{L_{2,\alpha}}:=\langle f,f\rangle_{L_{2,\alpha}}$ be the norm defined on $L_{2,\alpha}(\mathbb{R}^+)$ and let $\;_{\mathcal{H}}\mathcal{M}_-^\alpha$ be the operator introduced in \eqref{malf}, then we define the following stochastic integral w.r.t. the $\mathcal{H}$-fBm
    \begin{equation}\label{ibeta}
    I_\alpha(f):= \int_{\mathbb{R}^+}f(s)dB^\mathcal{H}_\alpha(s)=\int_{\mathbb{R}^+}\left(\;_{\mathcal{H}}\mathcal{M}_-^\alpha f\right)(s)dB(s).
    \end{equation}
   
\end{definition}
    
We recall that in \cite{BEG}, the existence of the noise of the $\mathcal{H}$-fBm was proved in the more general setting where the Le Roy measure is used, instead of the Gaussian measure $\nu(\cdot)$, which is a special case of the previous one. We now prove that the first integral in \eqref{ibeta} is well-defined as the limit of the corresponding integral for an approximating sequence of step functions.

\bigskip

Let us define for a step function $f: \mathbb{R}^+ \to \mathbb{R}$, such that $f(t):=\sum_{k=1}^n a_k 1_{[t_{k-1},t_k)}(t)$, for $t_0<t_1<...<t_n \in \mathbb{R}^+$, $a_k \in \mathbb{R},$ $k=1,...,n.$, the integral w.r.t. the $\mathcal{H}$-fBm as:
    \begin{eqnarray}
        \mathcal{J}_\alpha(f)&: =&\sum_{k=1}^n a_k\int_{\mathbb{R}^+}\left(\;_{\mathcal{H}}\mathcal{M}_-^\alpha 1_{[t_{k-1},t_k)}\right)(s)dB(s) \notag \\
        &=&\frac{K_\alpha}{\Gamma((\alpha+1)/2)}\sum_{k=1}^n a_k\left[\int_0^{t_k}\left(\log \frac{t_k}{s}\right)^{(\alpha-1)/2} dB(s)-\int_0^{t_{k-1}}\left(\log \frac{t_{k-1}}{s}\right)^{(\alpha-1)/2} dB(s)\right] \notag \\
        &=&\sum_{k=1}^n a_k[B^\mathcal{H}_\alpha(t_k)-B^\mathcal{H}_\alpha(t_{k-1})], \notag
    \end{eqnarray}
    where, and in the first step, we have applied Lemma 2.1 in \cite{BEG}, whilst, in the last one, we used $K_{\alpha}=\Gamma ((\alpha+1)/2)/\sqrt{\Gamma(\alpha)}$ together with \eqref{mvn}.
Thus, we have that
\begin{eqnarray}
&&\Vert \mathcal{J}_\alpha(f)\Vert_{L_2}^2 \notag \\
&=&\sum_{i,k=1}^n a_ia_k\int_{\mathbb{R}^+}\left(\;_{\mathcal{H}}\mathcal{M}_-^\alpha 1_{[t_{k-1},t_k)}\right)(s)\left(\;_{\mathcal{H}}\mathcal{M}_-^\alpha 1_{[t_{i-1},t_i)}\right)(s)ds \notag \\
&=&\frac{1}{\Gamma^2(\alpha)} \sum_{i,k=1}^n a_ia_k\int_{\mathbb{R}^+}\left[  \left(\log\frac{t_i}{s} \right)_+^{(\alpha-1)/2}-\left(\log\frac{t_{i-1}}{s} \right)_+^{(\alpha-1)/2}\right]\left[  \left(\log\frac{t_k}{s} \right)_+^{(\alpha-1)/2}-\left(\log\frac{t_{k-1}}{s} \right)_+^{(\alpha-1)/2}\right]ds \notag \\
&=&\frac{1}{\Gamma^2(\alpha)} \sum_{i,k=1}^n a_ia_k\left[A^{(i,k)}_+-A^{(i,k-1)}_+-A^{(i-1,k)}_+ +A^{(i-1,k-1)}_+\right], \notag
\end{eqnarray}
where $A^{(i,k)}_+=C_\alpha t_i \Psi\left(-(\alpha-1)/2, -(\alpha-1) ; \log\left(\frac{t_k}{ t_i}\right) \right)1_{t_i \leq t_k}$.

 We now prove that we can approximate any $f \in L_{2,\alpha}(\mathbb{R}^+)$ by a sequence of step functions $f_n \in L_{2,\alpha}(\mathbb{R}^+)$, for $n \geq 1.$ Our result is analogous to the statements proved for fractional Riemann-Liouville operators and, correspondingly, for fractional Brownian motion, first in \cite{BEN}, and then presented  in   \cite{MIS}, Lemma 1.6.2.

    \begin{theorem}\label{thmden}
     For $\alpha \in (0,1)\cup(1,2)$ the linear span of the set $\left\{_{\mathcal{H}}\mathcal{M}^\alpha_- 1_{(a,b)}; a,b \in \mathbb{R}^+ , a<b\right\}$ is dense in $L_2(\mathbb{R}^+)$. 
    \end{theorem}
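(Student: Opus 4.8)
The plan is to use Hilbert-space duality: a linear subspace of $L_2(\mathbb{R}^+)$ is dense if and only if its orthogonal complement is trivial. Hence it suffices to show that if $g\in L_2(\mathbb{R}^+)$ satisfies $\langle g,\,_{\mathcal{H}}\mathcal{M}_-^\alpha 1_{(a,b)}\rangle_{L_2}=0$ for every $0<a<b$, then $g=0$ almost everywhere. By Lemma \ref{lem2.1} (formulas \eqref{had4} and \eqref{had6}), in \emph{both} regimes $\alpha\in(0,1)$ and $\alpha\in(1,2)$ one has the same explicit expression
\[
\left(\,_{\mathcal{H}}\mathcal{M}_-^\alpha 1_{(a,b)}\right)(x)=\frac{1}{\sqrt{\Gamma(\alpha)}}\left[\left(\log\frac{b}{x}\right)_+^{(\alpha-1)/2}-\left(\log\frac{a}{x}\right)_+^{(\alpha-1)/2}\right],
\]
so the whole argument will be uniform in $\alpha$. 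Writing $G(t):=\int_0^t g(x)\left(\log\frac{t}{x}\right)^{(\alpha-1)/2}dx$, the orthogonality condition becomes $G(b)-G(a)=0$ for all $0<a<b$, i.e. $G$ is constant on $(0,\infty)$.

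To identify this constant I would estimate $G$ near the origin. By Cauchy--Schwarz and the elementary identity $\int_0^t(\log(t/x))^{\alpha-1}dx=t\,\Gamma(\alpha)$ (valid since $\alpha>0$), one gets $|G(t)|\le \Vert g\Vert_{L_2(0,t)}\sqrt{t\,\Gamma(\alpha)}\to 0$ as $t\to 0^+$. Thus the constant vanishes and $G\equiv 0$, that is,
\[
\int_0^t g(x)\left(\log\frac{t}{x}\right)^{(\alpha-1)/2}dx=0\qquad\text{for all }t>0.
\]

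The crux is to deduce $g=0$ from this identity, i.e. to invoke injectivity of the associated integral operator. Here I would pass to logarithmic variables $x=e^\xi$, $t=e^\tau$, which converts the Hadamard-type kernel into a (shifted) Riemann--Liouville convolution kernel — precisely the reduction that ties this statement to the fractional-Brownian-motion results of \cite{BEN} and \cite{MIS}, Lemma 1.6.2. Setting $\widetilde g(\xi):=e^{\xi/2}g(e^\xi)$, which is an isometry of $L_2(\mathbb{R}^+,dx)$ onto $L_2(\mathbb{R},d\xi)$, the vanishing of $G$ is equivalent to the convolution equation
\[
(\widetilde g * k)(\tau)=\int_{-\infty}^{\tau}\widetilde g(\xi)\,k(\tau-\xi)\,d\xi=0,\qquad \tau\in\mathbb{R},
\]
with kernel $k(u):=e^{-u/2}u^{(\alpha-1)/2}1_{\{u>0\}}$, which lies in $L_1(\mathbb{R})$ because $(\alpha-1)/2>-1$.

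Finally I would take Fourier transforms. Since $\widetilde g\in L_2(\mathbb{R})$ and $k\in L_1(\mathbb{R})$, the convolution theorem gives $\widehat{\widetilde g}\,\widehat{k}=0$, where a direct Gamma-integral computation yields
\[
\widehat k(\lambda)=\Gamma\!\left(\tfrac{\alpha+1}{2}\right)\left(\tfrac12+i\lambda\right)^{-(\alpha+1)/2}\neq 0\qquad\text{for every }\lambda\in\mathbb{R}.
\]
Since $\widehat k$ never vanishes, $\widehat{\widetilde g}=0$, hence $\widetilde g=0$ and therefore $g=0$, as required. The main obstacle is exactly this injectivity step: the set-up and the Fourier computation are routine, but one must carefully justify the logarithmic change of variables in $L_2$ (the isometry property of $\widetilde g$ and the $L_1$-membership of $k$) and verify that $\widehat k$ has no real zeros — this non-vanishing is what excludes a nontrivial orthogonal complement and plays the role of the Riemann--Liouville injectivity in the classical fBm argument.
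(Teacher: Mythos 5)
Your proof is correct, and it takes a genuinely different route from the paper's. The paper splits into two cases: for $\alpha\in(0,1)$ it works on the Mellin side, introducing the space $\mathfrak{F}_{(1-\alpha)/2}$ and invoking the step-function approximation Lemma \ref{lemstep} (adapted from Pipiras--Taqqu) together with the Mellin multiplier identities $\mathfrak{M}_s({}_{\mathcal{H}}\mathcal{D}_-^{\beta}f)=s^{\beta}\mathfrak{M}_s(f)$; for $\alpha\in(1,2)$ it shows $1_{(a,b)}\in\mathcal{I}_{\beta}(L_2(\mathbb{R}^+))$ via Lemma \ref{leminverse}, proves injectivity of ${}_{\mathcal{H}}\mathcal{I}_-^{\beta}$ by integrating against the complementary logarithmic kernel $(\log(x/a))^{-\beta}x^{-1}$ (a ``multiplicative Sonine'' computation), and then uses the Hardy--Littlewood bound of Lemma \ref{lemHL}. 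You instead give a single unified duality argument: orthogonality of $g$ to all generators forces $G(t)=\int_0^t g(x)(\log(t/x))^{(\alpha-1)/2}dx$ to be constant, the Cauchy--Schwarz estimate $|G(t)|\le\Vert g\Vert_{L_2(0,t)}\sqrt{t\Gamma(\alpha)}$ kills the constant, and the logarithmic substitution $\widetilde g(\xi)=e^{\xi/2}g(e^{\xi})$ (an isometry of $L_2(\mathbb{R}^+)$ onto $L_2(\mathbb{R})$) turns $G\equiv 0$ into $\widetilde g * k\equiv 0$ with $k(u)=e^{-u/2}u^{(\alpha-1)/2}1_{\{u>0\}}\in L_1(\mathbb{R})$, whose Fourier transform $\Gamma(\tfrac{\alpha+1}{2})(\tfrac12+i\lambda)^{-(\alpha+1)/2}$ has no real zeros; all steps check out, including the separate absolute convergence of $\int_0^b g(x)(\log(b/x))^{(\alpha-1)/2}dx$ needed to split the orthogonality relation into $G(b)-G(a)$. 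What your approach buys is brevity, uniformity in $\alpha$ (indeed it works verbatim for every $\alpha>0$ for which the explicit kernel formula holds), and independence from the Mellin machinery; what the paper's approach buys is the constructive apparatus (the spaces $\mathfrak{F}_{\beta}$ and explicit approximating step functions) that is reused immediately afterwards to identify $L_{2,\alpha}(\mathbb{R}^+)$ with $\mathfrak{F}_{(1-\alpha)/2}$ and to define the Wiener integral in \eqref{ialfa} — though, strictly speaking, once density is known by duality those approximations also follow abstractly.
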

    \begin{proof}
     \textit{(i)}  We start with the case $\alpha \in (0,1)$, i.e. with $_\mathcal{H}\mathcal{M}_{-}^{\alpha }=K_\alpha\thinspace_\mathcal{H}\mathcal{D}_{-}^{(1-\alpha)/2 }$; 
by recalling Lemma \ref{lem2.1}, it is easy to check that, for $\alpha \in (0,1)$ and 
$0 \leq a<b,$%
\begin{equation}\label{funct}
f(x):=\left( _\mathcal{H}\mathcal{D}_{-}^{(1-\alpha)/2 }1_{[a,b)}\right) (x)=\frac{1}{\Gamma
((\alpha+1)/2 )}\left[ \left( \log \frac{b}{x}\right) _{+}^{(\alpha-1)/2 }-\left(
\log \frac{a}{x}\right) _{+}^{(\alpha-1)/2}\right]  
\end{equation}%
has $S_f=\left\{s: 0<\Re(s)<\infty \right\}$; moreover, $x^{c-1/2}f(x)\in L_2(\mathbb{R}^+)$, for $c=1/2$, so that the Plancherel formula given in Lemma \ref{planch} holds and
\begin{equation}
\frac{1}{ 2\pi }\int^{+ \infty}_{- \infty}\left\vert\mathfrak{M}_{1/2+it}(1_{(a,b)}) \right\vert^2 \left(\frac{1}{4}+t^2\right)^\beta  dt
=\int_0^{\infty}f^2(x)dx < +\infty.
\end{equation}
Thus $1_{(a,b)} \in\mathfrak{F}_{(1-\alpha)/2}$ and the same holds for any step function $\varphi^*_n(x):=\sum_{k=1}^n a_k 1_{[x_{k-1},x_k)}(x)$, for $x_0<x_1<...<x_n \in \mathbb{R}^+$, $a_k \in \mathbb{R},$ $k=1,...,n$, and $n \geq 1$.

Now, by considering again Lemma \ref{planch}, in order to obtain the final result, it is sufficient to prove
that the linear span of the functions \begin{equation}
    \mathfrak{M}_s\left( _{\mathcal{H}}\mathcal{D}_{-}^{(1-\alpha)/2 }1_{[a,b)}\right)=s^{(1-\alpha)/2}\mathfrak{M}_s\left( 1_{[a,b)} \right)=(b^s-a^s)s^{-(1+\alpha)/2}\label{step2}
    \end{equation}
    is dense in $L_2(\mathbb{R}^+)$, i.e. that, for any function $f \in L_2(\mathbb{R}^+)$, there exists a sequence of step functions $f^*_n  \in S_f$ such that
\begin{equation}
\mathscr{I}_\alpha(f,f^*_n):=\frac{1}{2 \pi }\int_{- \infty}^{+\infty }\left\vert \mathfrak{M}_{1/2+it}(f)-\mathfrak{M}_{1/2+it}(f^*_n)(1/2+it)^{(1-\alpha)/2}\right\vert^2 dt \to 0. \notag
 \end{equation}
Since $\mathfrak{M}_{s}\left( _{\mathcal{H}}\mathcal{I}_{-}^{\beta }f\right)=s^{-\beta}\mathfrak{M}_{s}(f), $ $s \in \mathbb{C}$ with $\Re(s)>0,$ we can rewrite the previous integral as follows
 \begin{eqnarray}
 \mathscr{I}_\alpha(f,f^*_n)&=&
\frac{1}{2 \pi }\int_{- \infty}^{+\infty} \left\vert (1/2+it)^{(\alpha-1)/2}\mathfrak{M}_{1/2+it}(f)-\mathfrak{M}_{1/2+it}(f^*_n)\right\vert^2 |1/2+it|^{1-\alpha}dt \notag \\
&=&\frac{1}{2 \pi }\int_{- \infty}^{+\infty} \left\vert \mathfrak{M}_{1/2+it}\left( _{\mathcal{H}}\mathcal{I}_{-}^{(1-\alpha)/2 }f\right)-\mathfrak{M}_{1/2+it}(f^*_n)\right\vert^2 |1/2+it|^{1-\alpha}dt \notag
\end{eqnarray}
which goes to zero, as $n \to \infty,$ by recalling Lemma \ref{lemstep} (for $\beta=(1-\alpha)/2$) and by considering that  $_{\mathcal{H}}\mathcal{I}_{-}^{(1-\alpha)/2 } f\in \mathfrak{F}_{(1-\alpha)/2}$, since
\begin{eqnarray}
&&\frac{1}{ 2\pi }\int^{+ \infty}_{- \infty}\left\vert\mathfrak{M}_{1/2+it}\left( _{\mathcal{H}}\mathcal{I}_{-}^{(1-\alpha)/2 }f\right) \right\vert^2 \left(\frac{1}{4}+t^2\right)^{(1-\alpha)/2}  dt \notag \\
&=&\frac{1}{ 2\pi }\int^{+ \infty}_{- \infty}\left\vert\mathfrak{M}_{1/2+it}(f) \right\vert^2 \left\vert\frac{1}{2}+it\right\vert^{2\alpha-2} \left(\frac{1}{4}+t^2\right)^{1-\alpha} dt = \int_0^\infty f(t)^2 dt <\infty. \notag
\end{eqnarray}


      \textit{(ii)} For $\alpha \in (1,2)$, i.e. for $_\mathcal{H}\mathcal{M}_{-}^{\alpha }=K_\alpha\thinspace_\mathcal{H}\mathcal{I}_{-}^{(\alpha-1)/2 }$, it is easy to check that  \begin{equation}g(x):=
      \left( \thinspace _{\mathcal{H}}\mathcal{D}_{-}^{(\alpha-1)/2 }1_{(a,b)}\right) (x) \in L_{2}(\mathbb{R}^+), \label{l2}\end{equation} 
   since
      \begin{equation}
\left( _\mathcal{H}\mathcal{D}_{-}^{(\alpha-1)/2 }1_{(a,b)}\right) (x)=\frac{1}{\Gamma
((3-\alpha)/2 )}\left[ \left( \log \frac{b}{x}\right) _{+}^{(1-\alpha)/2 }-\left(
\log \frac{a}{x}\right) _{+}^{(1-\alpha)/2}\right] ,  \notag
\end{equation}%
belongs to $ L_{p}(\mathbb{R}^{+}),$ as far as $p<2/(\alpha-1).$ Let now denote by $\mathcal{I}_\beta(L_2(\mathbb{R}^+))$, for $\beta>0,$ the class of functions that can be represented as Hadamard integrals, i.e. the class of functions $f$ such that $f=\thinspace_\mathcal{H}\mathcal{I}^\beta_-\varphi$, for some $\varphi \in L_2(\mathbb{R}^+) $.  It can be checked  that if 
\begin{equation}\label{zero-1}
\left(\thinspace_\mathcal{H}\mathcal{I}^\beta_-\varphi \right)(x)=\frac{1}{\Gamma(\beta)}\int_{ x}^\infty u^{-1}\left(\log \frac{u}{x}\right)^{\beta -1}  \varphi(u)du=0, \qquad  a.a. \thickspace  x \in \mathbb{R}^+, 
\end{equation}
for $\beta \in (0,1)$,  then $\varphi(x)=0,$ $a.a. \thickspace  x \in \mathbb{R}^+$, so that the uniqueness of such function $\varphi$ is guaranteed.  Indeed, by integrating \eqref{zero-1} from any $a>0$ to $\infty$ with weight $\left(\log \left(x/a\right)\right)^{-\beta}x^{-1}$, we get that \begin{eqnarray}\int_a^\infty \left(\log \frac xa\right)^{-\beta}
\int_x^\infty \frac{\varphi(u)}{u}\left(\log \frac ux\right)^{\beta-1}du\frac{dx}{x} &=&\int_a^\infty\frac{\varphi(u)}{u}\int_a^u\left(\log \frac ux\right)^{\beta-1}\left(\log \frac xa\right)^{-\beta}\frac{dx}{x} du \notag \\
&=&\frac{\pi}{\sin(\pi\beta)}\int_a^\infty\frac{\varphi(u)}{u}du=0. \label{zero-2} \end{eqnarray}
The interchange of integration's order can be justified by the Fubini-Tonelli theorem, because $$\int_a^\infty\frac{|\varphi(u)|}{u}du \le \frac 1a\|\varphi\|_{L_2}<\infty. $$ Thus, 
from \eqref{zero-2}, we immediately get that $\varphi(u)=0 $, a.e. $u\in \mathbb{R}^+$. 

By considering Lemma \ref{leminverse} (for $\beta=(\alpha-1)/2 \in (0,1)$), we get that \[1_{(a,b)}(x)=K\left(\thinspace_\mathcal{H}\mathcal{I}_{-}^{(\alpha-1)/2 }g\right)(x) \in
\mathcal{I}_\beta(L_2(\mathbb{R}^+)),\] for $\beta=(\alpha-1)/2$, and this
is true also for more general step functions. Since the class of step functions is dense in $L_2(\mathbb{R}^+)$, it follows that also $\mathcal{I}_\beta(L_2(\mathbb{R}^+))$ is dense in $L_2(\mathbb{R}^+)$. Then, we can apply Lemma \ref{lemHL} (with $\gamma=1/p$ and $p=2$): for any $h \in\mathcal{I}_\beta(L_2(\mathbb{R}^+))  $, with $h=\thinspace_\mathcal{H}\mathcal{I}_{-}^{(\alpha-1)/2 }g$ and $g \in L_2(\mathbb{R}^+),$ there exists a sequence of step functions $\left\{g^*_n\right\}_{n \geq 1} \in L_2(\mathbb{R}^+)$, with $g^*_n \to g$, as $n \to \infty$, such that 
 \[
 \Vert \thinspace_\mathcal{H}\mathcal{I}_{-}^{(\alpha-1)/2 }g^*_n-h \Vert_{L_2} \leq 2^{(\alpha-1))/2}\Vert g^*_n-g \Vert_{L_2} \to 0, \qquad n \to \infty.
 \]     
As a consequence, the linear span of $\left\{_{\mathcal{H}}\mathcal{M}^\alpha_- 1_{(a,b)}; a,b \in \mathbb{R}^+ , a<b\right\}$ is dense in $\mathcal{I}_\beta(L_2(\mathbb{R}^+))$, for $\beta=(\alpha-1)/2$, and thus it is dense also in $L_2(\mathbb{R}^+)$.  
    \end{proof}
By the linearity of $_{\mathcal{H}}\mathcal{M}^\alpha_-$ and by Theorem \ref{thmden}, for a function $f \in L_{2,\alpha}(\mathbb{R}^+)$ and a sequence of step functions $\left\{ f^*_n \right\}_{n \geq 1}$ in $L_{2,\alpha}(\mathbb{R}^+)$ such that $\lim_{n \to \infty} f^*_n=f$, we have that $\lim_{n \to \infty} \;_{\mathcal{H}}\mathcal{M}^\alpha_-f^*_n= \; _{\mathcal{H}}\mathcal{M}^\alpha_-f$. Therefore we can write that 
    \begin{equation}
    I_\alpha(f)=\lim_{n \to \infty}\int_{\mathbb{R}^+}\left(\;_{\mathcal{H}}\mathcal{M}_-^\alpha f^*_n\right)(s)dB(s)=\lim_{n \to \infty}\int_{\mathbb{R}^+} f^*_n(s)dB^\mathcal{H}_\alpha(s), \label{ialfa}
    \end{equation}
so that the integral in Def. \ref{defibeta} is well-defined.

\subsection{Riemann-Stieltjes  integration  of smooth integrands with respect to the $\mathcal{H}$-fBm} In Subsection \ref{integinteg}  the general construction of the Wiener integral is provided. However, we can take into account H\"{o}lder properties of $\mathcal{H}$-fBm, established in Section \ref{pathwise} and construct the integral w.r.t. $\mathcal{H}$-fBm as the limit of Riemann sums, for sufficiently smooth deterministic integrands.  So, the following   statement  is a consequence of  the H\"{o}lder properties of $B^\mathcal{H}_\alpha$.
\begin{theorem}\label{cor:si}
Let $T>0$; then the integral
\[\int_0^T f(s)dB^\mathcal{H}_\alpha(s)\] 
exists as  the a.s. limit of Riemann-Stieltjes sums  if
 $f:[0,T] \to \mathbb{R}$ is  H\"older continuous up to order $\beta$ such that
 \begin{enumerate}[label=(\roman*)]
    \item if $\alpha \in (0,1)$, then $\beta>1-\frac{\alpha}{2}$,  
     \item if  $\alpha \in (1,2)$, then  $\beta>\frac{1}{2}$.
\end{enumerate}
Moreover, for any $\alpha \in (0,1) \cup (1,2)$ and for any function $f \in \mathcal{C}^{(1)}([0,T])$, the following integration-by-parts formula holds true
\[\int_0^T f(s)dB^\mathcal{H}_\alpha(s)=f(T)B^\mathcal{H}_\alpha(T)-\int_0^T B^\mathcal{H}_\alpha(s)f'(s)ds.\]
   
\end{theorem}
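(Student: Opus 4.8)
The plan is to invoke the classical Young integration theory pathwise, using the H\"older regularity of the trajectories of $B^\mathcal{H}_\alpha$ established in Theorem \ref{thm1}. Recall the Young--Love--Young theorem: if $f \in \mathcal{C}^\beta([0,T])$ and $g \in \mathcal{C}^\gamma([0,T])$ with $\beta + \gamma > 1$, then the Riemann--Stieltjes integral $\int_0^T f\,dg$ exists as the limit of Riemann--Stieltjes sums, the Love--Young inequality furnishing the bound that guarantees the sums form a Cauchy net. I would apply this statement pathwise, i.e. for each fixed $\omega$ in the full-measure event $\Omega'$ on which $t \mapsto B^\mathcal{H}_\alpha(t,\omega)$ has the H\"older regularity dictated by Theorem \ref{thm1}.

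For $\alpha \in (0,1)$, Theorem \ref{thm1} together with Lemma \ref{gik} gives $B^\mathcal{H}_\alpha \in \mathcal{C}^{\alpha/2-}([0,T])$, so for $\omega \in \Omega'$ the trajectory lies in $\mathcal{C}^\gamma$ for every $\gamma < \alpha/2$. If $f \in \mathcal{C}^\beta$ with $\beta > 1 - \alpha/2$, one can select $\gamma < \alpha/2$ close enough to $\alpha/2$ that $\beta + \gamma > 1$, and the Young theorem yields existence of the integral. For $\alpha \in (1,2)$, the same theorem gives $B^\mathcal{H}_\alpha \in \mathcal{C}^{1/2-}([0,T])$, so the trajectory is in $\mathcal{C}^\gamma$ for every $\gamma < 1/2$; combined with $\beta > 1/2$ this again produces a pair with $\beta + \gamma > 1$. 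Since the choice of $\gamma$ and the event $\Omega'$ depend only on $\alpha$ and $\beta$, the resulting integral exists for all $\omega \in \Omega'$, giving a.s.\ existence.

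For the integration-by-parts formula, I would observe that any $f \in \mathcal{C}^{(1)}([0,T])$ is Lipschitz, hence belongs to $\mathcal{C}^\beta$ for every $\beta < 1$; in particular the thresholds $\beta > 1 - \alpha/2$ (for $\alpha \in (0,1)$) and $\beta > 1/2$ (for $\alpha \in (1,2)$) are met, so $\int_0^T f\,dB^\mathcal{H}_\alpha$ exists. By the symmetry of the Young condition the integral $\int_0^T B^\mathcal{H}_\alpha\,df$ exists as well, and the classical integration-by-parts identity for Young integrals gives
\[
\int_0^T f(s)\,dB^\mathcal{H}_\alpha(s) + \int_0^T B^\mathcal{H}_\alpha(s)\,df(s) = f(T)B^\mathcal{H}_\alpha(T) - f(0)B^\mathcal{H}_\alpha(0).
\]
Since $\mathbb{E}(B^\mathcal{H}_\alpha(0))^2 = 0$ by \eqref{varvar}, we have $B^\mathcal{H}_\alpha(0) = 0$ a.s., and since $f \in \mathcal{C}^{(1)}$ the Young integral $\int_0^T B^\mathcal{H}_\alpha(s)\,df(s)$ reduces to the ordinary Lebesgue integral $\int_0^T B^\mathcal{H}_\alpha(s) f'(s)\,ds$. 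Substituting these two facts yields the asserted formula.

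The main obstacle is essentially bookkeeping with the exponents: one must pass from the ``up to order'' regularity $\mathcal{C}^{\beta-}$ of the trajectories to a genuine H\"older exponent $\gamma$ strictly below the threshold, and verify that $\beta + \gamma$ can still be pushed strictly above $1$. This is precisely why the \emph{strict} inequalities $\beta > 1 - \alpha/2$ and $\beta > 1/2$ (rather than $\geq$) are the sharp conditions. A secondary point to state carefully is the reduction of the Young integral $\int_0^T B^\mathcal{H}_\alpha\,df$ to a Riemann integral when $f \in \mathcal{C}^{(1)}$; this is standard but is exactly what converts the abstract Young identity into the concrete expression involving $f'$.
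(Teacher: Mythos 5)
Your proposal is correct and follows essentially the same route as the paper: the authors likewise deduce existence of the Riemann--Stieltjes integral by combining the H\"older exponents of Theorem \ref{thm1} with the Young-type criterion ``$a+b>1$'' (they cite Theorem 4.2.1 of Z\"ahle \cite{ZAH}, which is exactly the Love--Young statement you invoke), applied pathwise. Your additional details on choosing $\gamma$ strictly inside the $\mathcal{C}^{\beta-}$ range and on deriving the integration-by-parts identity (using $B^\mathcal{H}_\alpha(0)=0$ a.s.\ and reducing $\int_0^T B^\mathcal{H}_\alpha\,df$ to $\int_0^T B^\mathcal{H}_\alpha f'\,ds$) are sound and merely make explicit what the paper leaves to the reader.
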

\begin{proof}
    The proof follows immediately from Theorem \ref{thm1} and Theorem 4.2.1  in \cite{ZAH}, where it is proved that such integrals exist as the limits of the Riemann-Stieltjes sums if the integrand and integrator are H\"older continuous of orders $a$ and $b$ with $a+b>1$.
\end{proof}
 \begin{rem} (i) If the integrand $f$ is deterministic, then, obviously, $\int_0^T f(s)dB^\mathcal{H}_\alpha(s)$ is a Gaussian random variable. 
 
 (ii) If necessary, one can consider the extension of Theorem  \ref{cor:si} to random integrands $f$ having a.s.  the same H\"{o}lder property.
  \end{rem}
 \begin{rem} Let $T>0$ and   $\alpha\in(1,2)$. In this case we can integrate representation \eqref{mvn} by parts and get that 
 $$ B^\mathcal{H}_\alpha (t)=\frac{\alpha-1}{2\sqrt{\Gamma(\alpha)}}\int_{0}^{t} B(s)\left( \log \frac{t}{s}\right)^{(\alpha-3)/2}\frac{ds}{s}=\frac{\alpha-1}{2\sqrt{\Gamma(\alpha)}}\int _{0}^{1} B(ts)\left( \log \frac{1}{s}\right)^{(\alpha-3)/2}\frac{ds}{s}.  $$ Now, let $f:[0,T]\to \mathbb{R}$ be a continuous function, for which we know that  the integral
 $\int_0^T f(s)dB^\mathcal{H}_\alpha(s)$ 
exists as a limit  of Riemann-Stieltjes sums for any sequence of partitions with vanishing diameter. Then 
\begin{align}\label{stochint}&\int_0^T f(s)dB^\mathcal{H}_\alpha(s)\notag\\
&=\frac{\alpha-1}{2\sqrt{\Gamma(\alpha)}}\lim_{N\to \infty}\int_0^1\left(\sum_{k=0}^{N-1}f(Tk/N)(B(T(k+1)s/N)-B(Tks/N))\right)\left( \log \frac{1}{s}\right)^{(\alpha-3)/2}\frac{ds}{s}.\end{align}  Obviously,  
  \[ \sum_{k=0}^{N-1}f(Tk/N)(B(T(k+1)s/N)-B(Tks/N))\to \int_0^Tf(u)dB(su)  \] in $L_2(\Omega)$.
  Moreover, using  the Cauchy-Schwartz inequality and isometry $$\mathbb{E}\left(\sum_{k=0}^{N-1}f(Tk/N)(B(T(k+1)s/N)-B(Tks/N))\right)^2=\sum_{k=0}^{N-1}f^2(Tk/N)\frac{Ts}{N},$$  we can write that
  \begin{eqnarray}
&&\mathbb{E}\left(\int_0^1\left(\sum_{k=0}^{N-1}f(Tk/N)(B(T(k+1)s/N)-B(Tks/N))\right)\left( \log \frac{1}{s}\right)^{(\alpha-3)/2}\frac{ds} {s}\right)^2 \notag\\ &\le&  \int_0^1\mathbb{E}\left(\sum_{k=0}^{N-1}f(Tk/N)(B(T(k+1)s/N)-B(Tks/N))\right)^2\left( \log \frac{1}{s}\right)^{(\alpha-3)/2}\frac{ds} {s^{3/2}}  \notag\\
&\times& \int_0^1\left( \log \frac{1}{s}\right)^{(\alpha-3)/2}\frac{ds} {s^{1/2}} \notag  \leq \max_{t\in[0,T]} f^2(t) T \left( \int_0^1\left( \log \frac{1}{s}\right)^{(\alpha-3)/2}\frac{ds} {s^{1/2}}\right)^2 \\
   &\le&  \max_{t\in[0,T]} f^2(t) T\left(\int_0^\infty v^{(\alpha-3)/2}e^{-v/2}dv\right)^2. \notag
      \end{eqnarray}
     
      It means that the sequence of integrals in the right-hand side of \eqref{stochint} is uniformly integrable, whence 

     \begin{align*}\int_0^T f(s)dB^\mathcal{H}_\alpha(s)&=\frac{\alpha-1}{2\sqrt{\Gamma(\alpha)}}\int_0^1\int_0^Tf(u)dB(su)\left( \log \frac{1}{s}\right)^{(\alpha-3)/2}\frac{ds} {s}\\
     &=\frac{\alpha-1}{2\sqrt{\Gamma(\alpha)}}\int_0^T\int_{u/T}^1f(u/s)\left( \log \frac{1}{s}\right)^{(\alpha-3)/2}\frac{ds} {s}dB(u)\\
     &=\frac{\alpha-1}{2\sqrt{\Gamma(\alpha)}}\int_{0}^T\int_{u}^Tf(Tu/s)\left( \log \frac{T}{s}\right)^{(\alpha-3)/2}\frac{ds} {s}dB(u). 
      \end{align*}
     Then, for any continuous $f,g:[0,T]\to \mathbb{R}$, which are Riemann-integrable (i.e., whose integrals are the limits of their limits of integral sums), we have that \begin{align*}
     &\mathbb{E}\left(\int_0^T f(s)dB^\mathcal{H}_\alpha(s)\int_0^T g(r)dB^\mathcal{H}_\alpha(r)\right)\\
     &= \frac{(\alpha-1)^2}{4\Gamma(\alpha)} 
     \int_0^T\int_0^Tf(\rho)g(\nu)(\rho\nu)^{-1}\int_0^{\rho\wedge\nu}\left(\log\frac{\rho}{u}\log\frac{\nu}{u}\right)^{(\alpha-3)/2}du d\rho d\nu. \end{align*}
     The latter equality means that we find the form of the  kernel for integration w.r.t. $\mathcal{H}$-fBm in the case $\alpha\in(1,2)$, and this kernel equals $$K(\rho, \nu)=(\rho\nu)^{-1}\int_0^{\rho\wedge\nu}\left(\log\frac{\rho}{u}\log\frac{\nu}{u}\right)^{(\alpha-3)/2}du =(\rho\nu)^{-1}\sigma_{\rho,\nu}^{(\alpha-3)/2}  . $$ 
     Just to compare, recall that for fBm with Hurst index $H\in(1/2,1) $ the kernel equals 
     $$K(\rho, \nu)=H(2H-1)|\rho-\nu|^{2H-2}.$$
 \end{rem}
   \begin{rem} Let us introduce the  Hadamard fractional Ornstein-Uhlenbeck process ($\mathcal H$-fOU) $\{X_\alpha^{\mathcal H}(t)\}_{t\geq 0}$ satisfying the Langevin stochastic equation
 $$X_\alpha^{\mathcal H}(t)=  x_0-\theta\int_0^tX_\alpha^{\mathcal H}(s)ds +B_\alpha^{\mathcal H}(t),\quad X_\alpha^{\mathcal H}(0)=x_0, $$
 where $\theta>0$ and $\alpha\in(0,1)\cup (1,2).$ This process has been studied in a more general setting (involving Le Roy measures) in \cite{BEG}. The authors shown that the solution process is given by
$$X_\alpha^{\mathcal H}(t)=x_0e^{-\theta t} +B_\alpha^{\mathcal H}(t)-\theta e^{-\theta t}\int_0^tB_\alpha^{\mathcal H}(s)e^{\theta s}ds.$$ 
Since the exponential function  is Lipschitz continuous on $[0,t]$, we can apply the integration by-parts formula to $\int_0^te^{\theta s} d B_\alpha^{\mathcal H}(s) $ and get that 
$$X_\alpha^{\mathcal H}(t)=e^{-\theta t} x_0+e^{-\theta t}\int_0^te^{\theta s} d B_\alpha^{\mathcal H}(s).$$
 Obviously,  $X_\alpha^{\mathcal H}(t)$ is a Gaussian random variable with mean $ x_0 e^{-\theta t}$ and variance $$t-2\theta\int_0^t\sigma^\alpha_{t,s}e^{\theta(s-t)}ds+\theta^2\int_0^t\int_0^t\sigma^\alpha_{u,s}e^{\theta(u+s-2t)}dsdu,$$ where covariance function $\sigma^\alpha_{t,s}$ was introduced in \eqref{varGadamar}.  
 \end{rem}

\subsection{Inverse representation and multiplicative Sonine pairs}

Now we prove that the  representation \eqref{mvn} can be inverted.
    \begin{theorem}\label{thmbt}
    Let $\alpha \in (0,1)\cup(1,2)$ and let
    \begin{equation}\label{defm}
\left( \thinspace ^\mathcal{H}\mathcal{\overline{M}}_{-}^{\alpha }f\right) (x):=
\begin{cases}
 \left( \thinspace _{\mathcal{H}}\mathcal{I}_{-}^{(1-\alpha)/2 }f\right) (x),& \alpha \in (0,1),\\
f(x),& \alpha =1,\\
 \left( \thinspace _{\mathcal{H}}\mathcal{D}_{-}^{(\alpha-1)/2 }f\right) (x),& \alpha \in (1,2).
\end{cases}
\end{equation}
Then $_\mathcal{H}\mathcal{\overline{M}}_{-}^{\alpha }1_{[0,t)} \in L_2(\mathbb{R}^+)$ and the following inverse representation holds
    \begin{equation}
      B(t)=C'_\alpha\int _{0}^{\infty} \left( \thinspace _\mathcal{H}\mathcal{\overline{M}}_{-}^{\alpha }1_{[0,t)}\right)(s)dB^\mathcal{H}_\alpha(s), \label{mvn2}
  \end{equation}
  where $C_\alpha':=\sqrt{\Gamma(\alpha)}/\Gamma ((\alpha+1)/2)$.

    \end{theorem}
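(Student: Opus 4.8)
The plan is to exploit that the operator ${}^{\mathcal{H}}\mathcal{\overline{M}}_{-}^{\alpha}$ is, up to the reciprocal constant $C_\alpha'=1/K_\alpha$, a one-sided inverse of ${}_{\mathcal{H}}\mathcal{M}_{-}^{\alpha}$ when acting on the indicator $1_{[0,t)}$, and then to read off the statement directly from the definition \eqref{ibeta} of the Wiener integral with respect to $B^{\mathcal{H}}_\alpha$. Concretely, since by \eqref{ibeta}--\eqref{ialfa} one has $\int_0^\infty \left({}^{\mathcal{H}}\mathcal{\overline{M}}_{-}^{\alpha}1_{[0,t)}\right)(s)\,dB^{\mathcal{H}}_\alpha(s)=\int_0^\infty \left({}_{\mathcal{H}}\mathcal{M}_{-}^{\alpha}\,{}^{\mathcal{H}}\mathcal{\overline{M}}_{-}^{\alpha}1_{[0,t)}\right)(s)\,dB(s)$, the whole claim reduces to the operator identity ${}_{\mathcal{H}}\mathcal{M}_{-}^{\alpha}\,{}^{\mathcal{H}}\mathcal{\overline{M}}_{-}^{\alpha}1_{[0,t)}=K_\alpha 1_{[0,t)}$, together with the membership ${}^{\mathcal{H}}\mathcal{\overline{M}}_{-}^{\alpha}1_{[0,t)}\in L_2(\mathbb{R}^+)$ needed to make the right-hand integral meaningful.

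First I would compute ${}^{\mathcal{H}}\mathcal{\overline{M}}_{-}^{\alpha}1_{[0,t)}$ explicitly. Using the definition \eqref{intmin} of the Hadamard integral with the substitution $u=\log(z/x)$ for $\alpha\in(0,1)$, and the formula for ${}_{\mathcal{H}}\mathcal{D}_{-}^{(\alpha-1)/2}1_{(a,b)}$ already recorded in the proof of Theorem \ref{thmden}(ii) for $\alpha\in(1,2)$, I expect both branches to collapse to the single expression $\left({}^{\mathcal{H}}\mathcal{\overline{M}}_{-}^{\alpha}1_{[0,t)}\right)(x)=\Gamma\!\left(\tfrac{3-\alpha}{2}\right)^{-1}\left(\log\frac{t}{x}\right)_+^{(1-\alpha)/2}$. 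Its square integrability then follows from the substitution $x=te^{-v}$, which yields $\big\|{}^{\mathcal{H}}\mathcal{\overline{M}}_{-}^{\alpha}1_{[0,t)}\big\|_{L_2}^2=\Gamma\!\left(\tfrac{3-\alpha}{2}\right)^{-2}t\,\Gamma(2-\alpha)<\infty$, finite precisely because $2-\alpha>0$ on the whole range $\alpha\in(0,1)\cup(1,2)$.

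The operator identity is where Lemma \ref{leminverse} does the work. For $\alpha\in(0,1)$ we have ${}_{\mathcal{H}}\mathcal{M}_{-}^{\alpha}=K_\alpha\,{}_{\mathcal{H}}\mathcal{D}_{-}^{(1-\alpha)/2}$ and ${}^{\mathcal{H}}\mathcal{\overline{M}}_{-}^{\alpha}={}_{\mathcal{H}}\mathcal{I}_{-}^{(1-\alpha)/2}$, so the composition is $K_\alpha\,{}_{\mathcal{H}}\mathcal{D}_{-}^{(1-\alpha)/2}\,{}_{\mathcal{H}}\mathcal{I}_{-}^{(1-\alpha)/2}1_{[0,t)}=K_\alpha 1_{[0,t)}$ by \eqref{redint}, since $(1-\alpha)/2\in(0,1/2)\subset(0,1)$. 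For $\alpha\in(1,2)$ the roles are exchanged, ${}_{\mathcal{H}}\mathcal{M}_{-}^{\alpha}=K_\alpha\,{}_{\mathcal{H}}\mathcal{I}_{-}^{(\alpha-1)/2}$ and ${}^{\mathcal{H}}\mathcal{\overline{M}}_{-}^{\alpha}={}_{\mathcal{H}}\mathcal{D}_{-}^{(\alpha-1)/2}$, and \eqref{redder} gives $K_\alpha\,{}_{\mathcal{H}}\mathcal{I}_{-}^{(\alpha-1)/2}\,{}_{\mathcal{H}}\mathcal{D}_{-}^{(\alpha-1)/2}1_{[0,t)}=K_\alpha 1_{[0,t)}$, again because $(\alpha-1)/2\in(0,1/2)$. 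In particular this shows ${}^{\mathcal{H}}\mathcal{\overline{M}}_{-}^{\alpha}1_{[0,t)}\in L_{2,\alpha}(\mathbb{R}^+)$, so that the integral $I_\alpha\!\left({}^{\mathcal{H}}\mathcal{\overline{M}}_{-}^{\alpha}1_{[0,t)}\right)$ is genuinely defined via \eqref{ialfa}.

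Finally I would assemble the pieces: substituting the identity into the representation of the Wiener integral gives $C_\alpha'\int_0^\infty\left({}^{\mathcal{H}}\mathcal{\overline{M}}_{-}^{\alpha}1_{[0,t)}\right)(s)\,dB^{\mathcal{H}}_\alpha(s)=C_\alpha'K_\alpha\int_0^\infty 1_{[0,t)}(s)\,dB(s)=B(t)$, using $C_\alpha'K_\alpha=1$, which is immediate from the definitions of $K_\alpha$ and $C_\alpha'$. I do not anticipate a serious obstacle; the only point requiring genuine care is the applicability of Lemma \ref{leminverse}, which demands the order of the Hadamard operator to lie in $(0,1)$ — this is exactly what the restriction $\alpha\in(0,1)\cup(1,2)$ guarantees, the relevant orders being $(1-\alpha)/2$ and $(\alpha-1)/2$, both lying in $(0,1/2)$. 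The secondary subtlety of checking that the integrand belongs to $L_{2,\alpha}(\mathbb{R}^+)$ rather than merely to $L_2(\mathbb{R}^+)$ is resolved automatically once the operator identity is in hand.
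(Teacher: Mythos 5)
Your proposal is correct and follows essentially the same route as the paper: both rest on Lemma \ref{leminverse} to show that ${}_{\mathcal{H}}\mathcal{M}_{-}^{\alpha}\,{}^{\mathcal{H}}\mathcal{\overline{M}}_{-}^{\alpha}1_{[0,t)}=K_\alpha 1_{[0,t)}$ and then unwind the definition \eqref{ibeta}--\eqref{ialfa} of the Wiener integral, with $C_\alpha'=K_\alpha^{-1}$. The only cosmetic difference is that you verify the $L_2$ membership by computing $\big\|{}^{\mathcal{H}}\mathcal{\overline{M}}_{-}^{\alpha}1_{[0,t)}\big\|_{L_2}^2=\Gamma\left(\tfrac{3-\alpha}{2}\right)^{-2}t\,\Gamma(2-\alpha)$ directly, whereas the paper cites \eqref{l2} for $\alpha\in(1,2)$ and Lemma \ref{lemHL} for $\alpha\in(0,1)$; both are fine.
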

    \begin{proof}
    We have already seen, in the Theorem \ref{thmden}, that$_\mathcal{H}\mathcal{\overline{M}}_{-}^{\alpha }1_{[0,t)} \in L_{2}(\mathbb{R}^+)$,  for $\alpha \in (1,2)$ (see \eqref{l2}), while, for $\alpha \in (0,1)$, it follows by Lemma \ref{HL}, with $\gamma=1/p$ and $p=2$.
    
    Moreover, by considering Lemma \ref{leminverse} and \eqref{ialfa}, we can write, for $\alpha \in (0,1)$, that
    \begin{eqnarray}
    B(t)&=&\int_0^\infty 1_{[0,t)}(s)dB(s)=\int_0^\infty\left(_{\mathcal{H}}\mathcal{D}_{-}^{(1-\alpha)/2 } \; _{\mathcal{H}}\mathcal{I}_{-}^{(1-\alpha)/2 } 1_{[0,t)}\right)(s)dB(s) \notag \\
&=&\frac{1}{K_\alpha }\int_0^\infty\left(_{\mathcal{H}}\mathcal{M}_{-}^{\alpha } \; _{\mathcal{H}}\overline{\mathcal{M}}_{-}^{\alpha } 1_{[0,t)}\right)(s)dB(s)\notag \\
&=& \frac{1}{K_\alpha}\int_0^\infty\left( _{\mathcal{H}}\overline{\mathcal{M}}_{-}^{\alpha } 1_{[0,t)}\right)(s)dB_\alpha^\mathcal{H}(s),\notag 
    \end{eqnarray}
    where $K_\alpha$ is given in \eqref{malf}. Thus \eqref{mvn} follows, by denoting $C_\alpha':=K_\alpha ^{-1}$. 
    By analogous steps, we get the same result for $\alpha \in (1,2)$.
    \end{proof}

\begin{coro}\label{corobt}
    The following inverse representation holds for  $\mathcal{H}$-fBm, for any $t \geq 0$ and for $\alpha \in (0,1) \cup(1,2)$:
    \begin{equation}
      B(t)=\frac{\sqrt{\Gamma(\alpha)}}{{ \Gamma((\alpha+1)/2)}\Gamma((3-\alpha)/2)}\int _{0}^{t} \left( \log \frac{t}{s}\right)^{(1-\alpha)/2}dB^\mathcal{H}_\alpha(s). \label{inv1}
  \end{equation}

\end{coro}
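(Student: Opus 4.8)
The plan is to read Corollary \ref{corobt} off directly from Theorem \ref{thmbt}: the inverse representation \eqref{mvn2} already expresses $B(t)$ as a Wiener integral w.r.t. $B^\mathcal{H}_\alpha$ with integrand $\left(\;^\mathcal{H}\overline{\mathcal{M}}_-^\alpha 1_{[0,t)}\right)(s)$ and prefactor $C'_\alpha=\sqrt{\Gamma(\alpha)}/\Gamma((\alpha+1)/2)$. Hence the entire task reduces to computing the kernel $\;^\mathcal{H}\overline{\mathcal{M}}_-^\alpha 1_{[0,t)}$ explicitly and checking that, in both parameter ranges, it equals $\Gamma((3-\alpha)/2)^{-1}\left(\log\frac{t}{\cdot}\right)_+^{(1-\alpha)/2}$. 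Multiplying this common form by $C'_\alpha$ then produces exactly the constant $\sqrt{\Gamma(\alpha)}/\big(\Gamma((\alpha+1)/2)\Gamma((3-\alpha)/2)\big)$ appearing in \eqref{inv1}, and restricts the integration support to $(0,t)$ since $\left(\log\frac{t}{s}\right)_+$ vanishes for $s\ge t$.

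For $\alpha\in(1,2)$ the computation is immediate: by the definition \eqref{defm}, $\;^\mathcal{H}\overline{\mathcal{M}}_-^\alpha 1_{[0,t)}=\;_\mathcal{H}\mathcal{D}_-^{(\alpha-1)/2}1_{[0,t)}$, and the explicit Hadamard derivative of an indicator was already recorded in the proof of Theorem \ref{thmden} (the displayed formula accompanying \eqref{l2}). Taking $a=0$, $b=t$ there gives
\[
\left(\;_\mathcal{H}\mathcal{D}_-^{(\alpha-1)/2}1_{[0,t)}\right)(x)=\frac{1}{\Gamma((3-\alpha)/2)}\left(\log\frac{t}{x}\right)_+^{(1-\alpha)/2},
\]
which is the claimed form.

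For $\alpha\in(0,1)$ one has $\;^\mathcal{H}\overline{\mathcal{M}}_-^\alpha 1_{[0,t)}=\;_\mathcal{H}\mathcal{I}_-^{(1-\alpha)/2}1_{[0,t)}$, so I would evaluate the right-sided Hadamard integral \eqref{intmin} with $\beta=(1-\alpha)/2$ and $\mu=0$ directly. For $x<t$ this is $\Gamma((1-\alpha)/2)^{-1}\int_x^t\left(\log\frac{z}{x}\right)^{-(1+\alpha)/2}\frac{dz}{z}$; the substitution $u=\log(z/x)$ converts it into $\Gamma((1-\alpha)/2)^{-1}\int_0^{\log(t/x)}u^{-(1+\alpha)/2}\,du$, which converges at $u=0$ precisely because $\alpha<1$ and yields $\frac{2}{(1-\alpha)\Gamma((1-\alpha)/2)}\left(\log\frac{t}{x}\right)^{(1-\alpha)/2}$. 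The only point needing a moment's care is the constant bookkeeping: using $\frac{1-\alpha}{2}\,\Gamma\!\big(\tfrac{1-\alpha}{2}\big)=\Gamma\!\big(\tfrac{3-\alpha}{2}\big)$, this collapses to $\Gamma((3-\alpha)/2)^{-1}\left(\log\frac{t}{x}\right)_+^{(1-\alpha)/2}$, matching the $\alpha\in(1,2)$ expression.

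With the kernel identified in both cases, substituting into \eqref{mvn2} and folding the factor $C'_\alpha$ into the constant completes the proof; $L_2$-membership of the integrand, needed for the Wiener integral to make sense, was already established in Theorem \ref{thmbt}. There is no genuine obstacle here—the statement is a specialization of Theorem \ref{thmbt}—so I expect the only delicate step to be the Gamma-function constant reduction in the $\alpha\in(0,1)$ case, which must be tracked carefully to land on the exact prefactor in \eqref{inv1}.
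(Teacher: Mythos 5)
Your proposal is correct and follows essentially the same route as the paper: both evaluate the kernel $\left(\,^\mathcal{H}\overline{\mathcal{M}}_{-}^{\alpha}1_{[0,t)}\right)(s)$ explicitly in the two parameter ranges (a direct evaluation of the Hadamard integral for $\alpha\in(0,1)$, and the Hadamard derivative of the indicator for $\alpha\in(1,2)$), obtain $\Gamma((3-\alpha)/2)^{-1}\left(\log\frac{t}{s}\right)_+^{(1-\alpha)/2}$ in both cases, and substitute into \eqref{mvn2}. The only cosmetic difference is that for $\alpha\in(1,2)$ you cite the formula already recorded near \eqref{l2} instead of recomputing the derivative from \eqref{dermin} as the paper does, which is a legitimate shortcut.
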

\begin{proof}
For $\alpha \in (0,1)$ and $0 \leq s<t$, we recall the definition \eqref{intmin}, for $\mu=0$, so that we get
\begin{align*}
&\left( _{\mathcal{H}}\mathcal{I}_{-}^{(1-\alpha)/2 }1_{[0,t)}\right) (s)=\frac{1}{\Gamma
((1-\alpha)/2 )}\int_{s}^{t}\left( \log \frac{z}{s}\right) ^{-(1+\alpha)/2
}\frac{dz}{z}=\frac{1}{\Gamma ((3-\alpha)/2 )}\left( \log \frac{t}{s}\right)_+
^{(1-\alpha)/2 }.
\end{align*}

Analogously, for $\alpha \in (1,2)$ and $0 \leq s<t$, we can write, in view of \eqref{dermin}, with $\mu=0$, that

\begin{align*}
&\left( _{\mathcal{H}}\mathcal{D}_{-}^{(\alpha-1)/2 }1_{[0,t)}\right) (s)=-s \frac{d}{ds}\left(\thinspace_{\mathcal{H}}\mathcal{I}_{-}^{(3-\alpha)/2 }1_{[0,t)}\right)(s)\\
&= -\frac{s}{\Gamma
((3-\alpha)/2 )}\frac{d}{ds}\int_{s}^{t}\left( \log \frac{z}{s}\right) ^{(1-\alpha)/2
}\frac{dz}{z}=\frac{1}{\Gamma ((3-\alpha)/2 )}\left( \log \frac{t}{s}\right)_+
^{(1-\alpha)/2 },
\end{align*}
which could be alternatively derived in view of Lemma \ref{leminverse}.
Then, by \eqref{mvn2}, we can write, for $\alpha \in (0,1)$ that:
 \begin{eqnarray}
      B(t)&=&C'_\alpha\int _{0}^{\infty} \left( \thinspace _\mathcal{H}\mathcal{\overline{M}}_{-}^{\alpha }1_{[0,t)}\right)(s)dB^\mathcal{H}_\alpha(s)=\frac{1}{K_\alpha}\int _{0}^{\infty} \left( \thinspace _\mathcal{H}\mathcal{I}_{-}^{(1-\alpha)/2 }1_{[0,t)}\right)(s)dB^\mathcal{H}_\alpha(s), \notag\\
      &=& \frac{1}{K_\alpha\Gamma((3-\alpha)/2)}\int _{0}^{\infty} \log \left(\frac{t}{s}\right)^{(1-\alpha)/2}dB^\mathcal{H}_\alpha(s), 
  \end{eqnarray}
  which gives \eqref{inv1}, by recalling that $K_{\alpha}=\Gamma ((\alpha+1)/2)/\sqrt{\Gamma(\alpha)}$, and analogously, for $\alpha \in (1,2)$. 



\end{proof}


 \begin{remark} Let us look at the situation with the inverse representation \eqref{inv1} for $\alpha\in(1,2)$ from the point of view of ``multiplicative Sonine  pairs''. It follows from \eqref{mvn} and \eqref{inv1} that, for any $z>0$,
\begin{eqnarray}B^\mathcal{H}_\alpha(z)&=&\frac{1}{\sqrt{\Gamma(\alpha)}}\int_0^z\left(\log\frac{z}{t}\right)^{\frac{\alpha-1}{2}}dB(t) \notag \\&=&\frac{1}{ \Gamma((\alpha+1)/2)\Gamma((3-\alpha)/2)}\int_0^z\left(\log\frac{z}{t}\right)^{\frac{\alpha-1}{2}}d_t\left(\int _{0}^{t} \left( \log \frac{t}{s}\right)^{(1-\alpha)/2}dB^\mathcal{H}_\alpha(s)\right).\notag
     \end{eqnarray}
    
      Now, for $\alpha>1$ we can integrate by parts, and get that
     \begin{equation}\begin{gathered}\label{identity}B^\mathcal{H}_\alpha(z)= \frac{\alpha-1}{2}\frac{1}{ \Gamma((\alpha+1)/2)\Gamma((3-\alpha)/2)}\int_0^z\left(\log\frac{z}{t}\right)^{\frac{\alpha-3}{2}}\frac1t  \left(\int _{0}^{t} \left( \log \frac{t}{s}\right)^{(1-\alpha)/2}dB^\mathcal{H}_\alpha(s)\right)dt\\= \frac{1}{\Gamma((\alpha-1)/2)\Gamma((3-\alpha)/2)}\int_0^z\left(\int_s^z \left(\log\frac{z}{t}\right)^{\frac{\alpha-3}{2}}\left( \log \frac{t}{s}\right)^{(1-\alpha)/2}\frac{dt}{t}\right)dB^\mathcal{H}_\alpha(s). 
     \end{gathered}
      \end{equation}
     We observe the effect that logarithms play the role of ``multiplicative Sonine  pairs'',  because 
      \begin{equation}\label{multSonine}\begin{gathered} \int_s^z \left(\log\frac{z}{t}\right)^{\frac{\alpha-3}{2}}\left( \log \frac{t}{s}\right)^{(1-\alpha)/2}\frac{dt}{t}=\int_{\log s}^{\log z}(\log z-u)^{\frac{\alpha-3}{2}}(u-\log s)^{(1-\alpha)/2}du\\=B\left(\frac{\alpha-1}{2},\frac{3-\alpha}{2}\right)=\Gamma\left(\frac{\alpha-1}{2}\right)\Gamma\left(\frac{3-\alpha}{2}\right )=\frac{\pi}{\cos(\pi\alpha/2)},
      \end{gathered}
      \end{equation}
and the latter equality, indeed, transforms \eqref{identity} into identity. It is interesting to compare this situation with the kernels participating in the representation of fractional Brownian motion, where the Sonine pair $c(s) = s^{-\alpha}$ and $h(s) = s^{\alpha-1}$ with some $\alpha\in(0, 1/2)$ plays the crucial role, see, for example, discussion and some other examples of Sonine pairs in \cite{MiShSh}.
 \end{remark} 

\section{Reproducing kernel Hilbert space and  law of iterated logarithm for  $\mathcal{H}$-fBm}\label{reprod}
Having in hands  ``multiplicative Sonine  pairs'', we can proceed with reproducing kernel Hilbert spaces and  law of iterated logarithm.  By the reasons that will be clear later, we consider only   $\alpha\in(0,1).$
 Taking into account the general theory given in \cite{aron} and the results in Section 2.2 of \cite{Hult},  the reproducing kernel Hilbert space of $\{B^\mathcal{H}_\alpha(t), t\ge 0\}$ can   be represented as the image of
$L_2(\mathbb{R})$ under the integral transform
\begin{equation}\label{rkhs}F(t)=(Af)(t)=\frac{1}{\sqrt{\Gamma(\alpha)}}\int_0^t\left(\log\frac{t}{s}\right)^{\frac{\alpha-1}{2}}f(s)ds, \qquad t\ge 0. 
    \end{equation}
    Multiplying both sides of \eqref{rkhs} by $\left(\log \frac{z}{t}\right)^{\frac{-\alpha-1}{2}}\frac1t dt$, integrating from $0$ to $z$ and using the ``multiplicative Sonine  pairs'' property \eqref{multSonine} with powers $\frac{\alpha-1}{2}$ and $\frac{-\alpha-1}{2}$, we obtain that $$\int_0^z\left(\log \frac{z}{t}\right)^{\frac{-\alpha-1}{2}}\frac{F(t)}{t} dt=\frac{\pi}{\sqrt{\Gamma(\alpha)}\cos\left(\frac{\pi\alpha}{2}\right)}\int_0^zf(s)ds,$$
    whence
    $$f(z)=(A^{-1}F)(z)=\frac{\sqrt{\Gamma(\alpha)}\cos\left(\frac{\pi\alpha}{2}\right)}{\pi}\frac{d}{dz}\left(\int_0^z\left(\log \frac{z}{t}\right)^{\frac{-\alpha-1}{2}}\frac{F(t)}{t} dt\right), \qquad a.e.$$ Note that the evident restriction $\frac{-\alpha-1}{2}>-1$ implies that we can consider here only $\alpha\in (0,1)$.
    Summarizing, our RKHS has a form $\mathcal{H}=\{F=Af, f\in L_2(\mathbb{R})\}$ with the inner product $$\langle F,G\rangle_{\mathcal{H}}=\langle (A^{-1}F), (A^{-1}G)\rangle_{L_2(\mathbb{R})}.$$
    \begin{rem} Similarly, we can consider, for any $T>0$, the RKHS of the process  $\{B^\mathcal{H}_\alpha(t), t\in[0,T]\}$, replacing $L_2(\mathbb{R})$ with $L_2([0,T])$. 
        
    \end{rem}
    
    Now, in order to consider the law of iterated logarithm for the process  $B^\mathcal{H}_\alpha$ both at zero and at $\infty,$  we apply Corollary 3.1 from  \cite{arcones} and Theorem 1 from \cite{Lai} and the results from \cite{oodaira}, respectively (see the Appendix in Section \ref{app} ). 
    \begin{theorem} Let $\alpha\in(0,1)$. \begin{itemize}
    \item[(i)] For any $T>0$,  with probability one, the set 
    $$\left\{ \frac{B^\mathcal{H}_\alpha(ut)}{u^{1/2}(2 \log \log u^{-1})^{1/2}},\,t\in[0,T]\right\}$$
    is relatively compact in $l_{\infty}([0,T])$, as $u\to 0+$, and its   set of limit points, as $u\to 0+$, is the unit ball   of
the RKHS of   $\{B^\mathcal{H}_\alpha(t),\,  t\in[0,T]\}$. 
 \item[(ii)] For any $T>0$,  with probability one, the set  
    $$\left\{ \frac{B^\mathcal{H}_\alpha(nt)}{n^{1/2}(2 \log \log n^{-1})^{1/2}},\,t\in[0,T]\right\},$$
    is relatively compact in $C([0,T])$, and its set of limit points, as $n\to\infty$, coincides with   the unit ball  of
the RKHS of   $\{B^\mathcal{H}_\alpha(t),  t\in[0,T]\}$. 
    \end{itemize} 
        
    \end{theorem}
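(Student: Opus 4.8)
The plan is to obtain both assertions as direct applications of known functional laws of the iterated logarithm for self-similar Gaussian processes, feeding in the $1/2$-self-similarity of $B^\mathcal{H}_\alpha$, the H\"older regularity from Theorem \ref{thm1}, and the explicit reproducing kernel Hilbert space description derived above. First I would collect the structural facts about $B^\mathcal{H}_\alpha$ that the cited theorems require. The process is centered Gaussian with covariance $\sigma^\alpha_{t,s}$ from \eqref{varGadamar}, it is self-similar of index $1/2$, and, by Theorem \ref{thm1}$(i)$, for $\alpha\in(0,1)$ its trajectories lie in $\mathcal{C}^{\alpha/2-}([0,T])$, hence are a.s.\ continuous on each compact interval. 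Sample-path continuity is what places the rescaled families in $C([0,T])$, and therefore in $l_\infty([0,T])$, so that the relative-compactness statements make sense in the asserted ambient spaces.

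For part $(i)$ I would verify the hypotheses of Corollary 3.1 in \cite{arcones}. Self-similarity gives, for each fixed $u>0$, the equality in distribution $\{u^{-1/2}B^\mathcal{H}_\alpha(ut)\}_{t\in[0,T]}\stackrel{d}{=}\{B^\mathcal{H}_\alpha(t)\}_{t\in[0,T]}$, which is precisely the scaling input of Arcones' small-time functional LIL, and $u^{1/2}(2\log\log u^{-1})^{1/2}$ is the associated Strassen normalization. The theorem then yields relative compactness together with the identification of the cluster set as the unit ball of the RKHS of $\{B^\mathcal{H}_\alpha(t),\,t\in[0,T]\}$. Here the explicit inverse operator $A^{-1}$ and the multiplicative Sonine identity \eqref{multSonine} are essential: they make the RKHS concrete and confirm that it is well-defined exactly for $\alpha\in(0,1)$, which is the reason this range is singled out.

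For part $(ii)$ I would run the parallel argument at infinity, using Theorem 1 of \cite{Lai} together with the results of \cite{oodaira}, again invoking self-similarity to transport the fixed-scale law to the large-time regime; the only changes are the ambient space $C([0,T])$ and the direction of the scaling limit, while the cluster set is the unit ball of the same RKHS. In both parts the role of self-similarity is to reduce the asymptotic statement to a single copy of the process, after which the general Strassen-type machinery supplies the unit ball of the Cameron--Martin space automatically.

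The main obstacle I anticipate is matching $B^\mathcal{H}_\alpha$ to the precise regularity and non-degeneracy conditions of each theorem. In particular one must check that the covariance and the sample-path modulus of continuity from Remark \ref{rem2.2} are compatible with the entropy and continuity hypotheses in \cite{arcones}, \cite{Lai} and \cite{oodaira}, and that a non-degeneracy condition is available guaranteeing that the limit set is the \emph{full} unit ball rather than a proper subset; the local nondeterminism proved in Section \ref{locnondeterm} is the natural candidate to supply this last ingredient. Once these conditions are aligned with the hypotheses of the cited results, both $(i)$ and $(ii)$ follow.
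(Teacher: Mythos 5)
Your plan coincides with the paper's at the top level: both parts are obtained by checking the hypotheses of the small-time functional LIL of \cite{arcones} (Theorem \ref{loglog1}) and of the Lai--Oodaira large-time version (Theorem \ref{loglog2}). The problem is that the proposal stops at the plan, and the entire content of the paper's proof is precisely the verification of those hypotheses. For part $(i)$, Arcones' theorem is not a consequence of self-similarity alone: besides the scaling identity $\mathbb{E}[X(ut)X(us)]=u\,\mathbb{E}[X(t)X(s)]$ it requires the asymptotic decorrelation condition
\[
\limsup_{u\to 0}\, u^{-1/2}\bigl|\mathbb{E}[B^\mathcal{H}_\alpha(t)B^\mathcal{H}_\alpha(us)]\bigr|=0 ,
\]
which is condition (iv) of Theorem \ref{loglog1} and the only non-trivial computation in this half of the proof. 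The paper establishes it by substituting $v=uz$ in the covariance integral \eqref{varGadamar} and using that, for $u<t/s$ and $\alpha\in(0,1)$, one has $\bigl(\log\frac{t}{uz}\bigr)^{(\alpha-1)/2}\le\bigl(\log\frac{s}{z}\bigr)^{(\alpha-1)/2}$ because the exponent is negative, whence $u^{-1/2}\sigma^\alpha_{t,us}\le u^{1/2}\int_0^s\bigl(\log\frac sz\bigr)^{\alpha-1}dz= u^{1/2}s\,\Gamma(\alpha)\to 0$. This is where the covariance structure and the restriction $\alpha<1$ genuinely enter, and your proposal never identifies it. Likewise for part $(ii)$ one must actually exhibit the data demanded by Theorem \ref{loglog2}: the Volterra kernel $Q(t,v)=(\log(t/v))^{(\alpha-1)/2}$ with the homogeneity $Q(rt,rv)=Q(t,v)$ (so $u(r)=1$ and $ru^2(r)=r\uparrow\infty$), the scaling $v(r)=r$, the choice $g_T(h)=C_{T,\alpha}h^{\alpha}$ coming from Theorem \ref{thm1}, the convergence of the entropy integral $\int_1^\infty (g_T(e^{-u^2}))^{1/2}du$, and $R(T,T)/g_T(1)\uparrow\infty$ via \eqref{const}. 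None of this is carried out, so as written the argument is a programme rather than a proof.

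Two further remarks. Your suggestion that local nondeterminism (Section \ref{locnondeterm}) is the ingredient guaranteeing that the cluster set is the \emph{full} unit ball is a misdirection: in both cited theorems the identification of the limit set with the unit ball of the RKHS is part of the conclusion under the listed hypotheses, and the paper's proof makes no use of LND. You are right, however, that the restriction to $\alpha\in(0,1)$ is tied to the RKHS inversion (the exponent $\frac{-\alpha-1}{2}$ must exceed $-1$); note that the same restriction is also what makes the decorrelation estimate above work.
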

    \begin{proof} 
    \begin{itemize}
    \item[(i)]
    It is necessary to check the conditions of Theorem \ref{loglog1}. Obviously, condition (i) is fulfilled with $\gamma=1/2$, conditions (ii) and (iii) are evident. Condition (iv):    taking into account that $\alpha\in(0,1)$, we have for any $s,t\in[0,T]$ and $u<t/s$: \begin{equation*} \begin{gathered} u^{-1/2} \sigma^\alpha_{t,us}= u^{-1/2}\int_0^{us}\left(\log\frac{t}{v}\right)^{\frac{\alpha-1}{2}}\left(\log\frac{us}{v}\right)^{\frac{\alpha-1}{2}}dv\\=u^{1/2}\int_0^{s}\left(\log\frac{t}{uz}\right)^{\frac{\alpha-1}{2}}\left(\log\frac{s}{z}\right)^{\frac{\alpha-1}{2}}dz\le u^{1/2}\int_0^{s} \left(\log\frac{s}{z}\right)^{ \alpha-1 }dz \to 0\end{gathered}\end{equation*} as $u\to 0+$, whence condition (iv) of Theorem \ref{loglog1} follows. 
    
       \item[(ii)] We check the conditions of Theorem \ref{loglog2}. According to Theorem  \ref{thm1}, we should  put in condition (v), (a)  $g_T(h)=C_{T,\alpha}h^{\alpha}.$ Then condition (v), (b) is evident. Furthermore, $R(T,T)=T$ while $g_T(1)=C_{T,\alpha}\le CT^{1-\alpha}$, according to relation \eqref{const}, whence (v), (c) follows. In condition (vi) function $v(r)=r$, therefore, (vi) is fulfilled, while in condition (vii) $Q(t,v)=\left(\log\frac{t}{v}\right)^{\frac{\alpha-1}{2}}$, therefore, $u(r)=1$, condition (vii) is also fulfilled, and the proof follows. 
    \end{itemize}    
    \end{proof}

 \section{Appendix}\label{app}

 \subsection{Auxiliary results}
 Let us prove a result that is essential in order to produce  upper bounds for the incremental variance.
 \begin{lem}\label{implem}
    Let $0<s\le t$. Then for any $p\ge 1$ the following inequality holds:
\begin{equation}\label{auxil-e-ineq}s\left(\log\frac ts\right)^p\le L_p(t-s),  \end{equation}
  where $L_p=\left(p(p-1)^{p-1}e^{1-p}\right)\vee 1$.
 \end{lem}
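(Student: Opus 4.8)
The plan is to reduce the inequality to a scale-invariant, one-variable statement and then bound the logarithmic power by integrating its derivative. First I would set $x := t/s \ge 1$ (legitimate since $s>0$), so that $t - s = s(x-1)$ and $\log(t/s) = \log x$; dividing \eqref{auxil-e-ineq} by $s>0$ shows that the claim is equivalent to the $s,t$-free inequality
\[
(\log x)^p \le L_p (x - 1), \qquad x \ge 1.
\]
This isolates the real content of the lemma and removes the apparent dependence on the two endpoints.

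Next I would exploit that both sides vanish at $x=1$ and compare derivatives. Writing $\varphi(x) := (\log x)^p$, one has $\varphi(1)=0$ and $\varphi'(x) = p(\log x)^{p-1}/x =: h(x)$, so that $\varphi(x) = \int_1^x h(\xi)\,d\xi \le (x-1)\sup_{\xi \ge 1} h(\xi)$. It therefore suffices to prove $\sup_{x\ge 1} h(x) \le L_p$. Logarithmic differentiation gives $h'(x)/h(x) = x^{-1}\bigl[(p-1)/\log x - 1\bigr]$, so for $p>1$ the unique critical point on $(1,\infty)$ is $x_\ast = e^{p-1}$, where $h$ changes from increasing to decreasing and hence attains its global maximum; evaluating, $h(x_\ast) = p(p-1)^{p-1}e^{1-p}$. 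Integrating yields $(\log x)^p \le p(p-1)^{p-1}e^{1-p}\,(x-1)$, which is the desired bound with the sharp constant.

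Finally I would reconcile this with the stated constant $L_p = \bigl(p(p-1)^{p-1}e^{1-p}\bigr)\vee 1$. For $p=1$ the derivative is simply $h(x)=1/x$, whose supremum on $[1,\infty)$ is $h(1)=1$, consistent with the convention $0^0=1$ in the formula; and for those $p>1$ for which $p(p-1)^{p-1}e^{1-p}<1$, replacing the sharp constant by the larger value $1$ only weakens the inequality, so it still holds. Taking the maximum with $1$ thus produces a single constant valid uniformly for all $p\ge 1$.

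I expect the only delicate point to be the verification that the critical point $x_\ast=e^{p-1}$ genuinely lies in the domain $[1,\infty)$ (true precisely because $p\ge 1$) and is a global, not merely local, maximum; this is settled by checking the sign of $h'$ together with the boundary behaviour $h(x)\to 0$ as $x\to 1^+$ (valid since $p-1\ge 0$) and $h(x)\to 0$ as $x\to\infty$. The degenerate case $p=1$ and the $0^0$ bookkeeping are purely cosmetic and are exactly what the $\vee\,1$ is designed to absorb.
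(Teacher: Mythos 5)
Your proof is correct and follows essentially the same route as the paper: reduce to the one-variable inequality $(\log x)^p\le L_p(x-1)$ for $x=t/s\ge 1$, note equality at $x=1$, and bound the derivative by locating the maximum of $(\log x)^{p-1}/x$ at $x=e^{p-1}$ with value $(p-1)^{p-1}e^{1-p}$. The only difference is cosmetic (you phrase the derivative comparison as an integral bound and spell out the $p=1$ and $\vee\,1$ bookkeeping more explicitly).
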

\begin{proof}  Let $p>1$. Dividing both parts of \eqref{auxil-e-ineq} by $s>0$ and denoting $x=\frac ts\ge 1$, we get that it is enough to establish the following inequality:
\begin{equation}\label{auxil-e-ineq1} \left(\log x\right)^p\le L_p(x-1), \qquad x\ge 1. \end{equation}
Obviously, \eqref{auxil-e-ineq1} holds for $x=1$, being an equality; therefore, it is enough to compare the derivatives, establishing inequality
 \begin{equation*}\label{auxil-e-ineq2}
  \frac{(\log x)^{p-1}}{x}\le \frac{L_p}{p}, \qquad x\ge 1.
 \end{equation*}
The function $ (\log x)^{p-1}/x$ is zero at 1 and tends to 0 at $\infty$, achieving its maximum at $x=e^{p-1}$; this maximal value equals $(p-1)^{p-1}e^{1-p}$, whence the inequality \eqref{implem} follows. For $p=1$ the proof is evident.
  
\end{proof}
Now we formulate two theorems about the law of iterated logarithm for Gaussian processes. The first result can be obtained as in  Corollary 3.1 in \cite{arcones}, a bit  adapted  ad simplified for our purposes. We consider Theorems \ref{loglog1} and \ref{loglog2} as a couple, therefore continue numeration of conditions in them. 
\begin{theorem}\label{loglog1}
 Let $\{X(t),  t\in[0,T]\}$ be a  zero mean Gaussian process and let 
$\gamma> 0$.
Suppose that the following conditions are satisfied:
\begin{itemize}
\item[(i)] $ \mathbb{E}[X(ut)X(us)] = u^{2\gamma}
\mathbb{E}[X(t)X(s)]$ for each $ 0< u< 1$ and each $s, t\in[0,T]$.

\item[(ii)] $\sup_{t\in[0,T]} |X(t)| < \infty$ a.s.

\item[(iii)] $\lim_{u\to 1-} \mathbb{E}[X(ut)X(t)] = \mathbb{E}[X^2(t)]$  for each $t\in[0,T].$

\item[(iv)]
$\lim \sup_{u\to 0} u^{-\gamma}|\mathbb{E}[X(t)X(us)]|= 0,\,s,t\in[0,T].$
\end{itemize}
Then,  
with probability one, the set 
    $$\left\{ \frac{X(ut)}{u^{1/2}(2 \log \log u^{-1})^{1/2}},\,t\in[0,T]\right\}$$
    is relatively compact in $l_{\infty}([0,T])$, as $u\to 0+$, and its   set of limit points, as $u\to 0+$, is the unit ball   of
the RKHS of   $\{X(t), t\in[0,T]\}$. 
    
\end{theorem}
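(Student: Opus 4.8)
The statement is a Strassen-type functional law of the iterated logarithm at the origin for a Gaussian process whose covariance is self-similar of index $\gamma$ (condition (i)). The plan is to deduce it from the general Gaussian functional LIL, which splits into two essentially independent halves: an \emph{upper half} (the normalised family is relatively compact and its cluster set is contained in the unit ball $K$ of the RKHS), proved by Gaussian concentration; and a \emph{lower half} (every point of $K$ is a cluster point), proved by a correlation Borel--Cantelli argument. Conditions (i)--(iv) are exactly what makes these two halves go through, and the only genuine difficulty is the lower half.

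First I would fix the normalisation dictated by self-similarity, namely $\beta_u = u^{\gamma}(2\log\log u^{-1})^{1/2}$ (which is $u^{1/2}(2\log\log u^{-1})^{1/2}$ when $\gamma=1/2$, as in the application), and set $Z_u(t)=X(ut)/\beta_u$ for $t\in[0,T]$. By (i), the centred Gaussian process $W(t):=X(ut)/u^{\gamma}$ has a covariance independent of $u$, so $Z_u = W/(2\log\log u^{-1})^{1/2}$ is a \emph{single} Gaussian process divided by the iterated-logarithm factor; this is the structural observation that makes the normalisation correct. Conditions (ii) and (iii) say that this common law is that of a bounded, mean-square (hence, being Gaussian, sample-) continuous process, so its covariance operator is compact and the RKHS $\mathcal H$ embeds compactly in $C([0,T])\subset l_\infty([0,T])$; consequently $K=\{h\in\mathcal H:\|h\|_{\mathcal H}\le1\}$ is a convex, symmetric, norm-compact set. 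This compactness of $K$ is what upgrades ``eventually inside every $\varepsilon$-neighbourhood'' to genuine relative compactness.

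For the upper half I would work along the geometric scales $u_n=c^{n}$ ($c\in(0,1)$) and control the continuous passage $u\to0+$ by an oscillation estimate between consecutive scales, again supplied by (ii)--(iii). The core input is the Borell--TIS isoperimetric inequality applied to the fixed law of $W$: for each $\varepsilon>0$ it gives $\mathbb P(Z_{u_n}\notin K^{\varepsilon})\le\exp\{-(1+o(1))\log\log u_n^{-1}\}=\exp\{-(1+o(1))\log n\}$, which is summable after a harmless thinning, so Borel--Cantelli yields $Z_{u_n}\in K^{\varepsilon}$ eventually, a.s. Intersecting over a countable sequence $\varepsilon\downarrow0$ and invoking compactness of $K$ gives both relative compactness of $\{Z_u\}_{u\to0+}$ and the inclusion of its cluster set in $K$.

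The delicate half, and the main obstacle, is the lower half: that \emph{every} $h\in K$ is a cluster point. Here condition (iv) is indispensable. For fixed $h$ with $\|h\|_{\mathcal H}<1$ and $\varepsilon>0$, a Cameron--Martin shift together with the Gaussian small-ball lower bound gives, with $a_n=(2\log\log u_n^{-1})^{1/2}$, the estimate $\mathbb P(\|Z_{u_n}-h\|_\infty<\varepsilon)=\mathbb P(\|W-a_n h\|_\infty<a_n\varepsilon)\ge e^{-a_n^2\|h\|_{\mathcal H}^2/2}\,\mathbb P(\|W\|_\infty<a_n\varepsilon)\ge(\log u_n^{-1})^{-\|h\|_{\mathcal H}^2(1+o(1))}$, whose sum over $n$ diverges because $\|h\|_{\mathcal H}<1$. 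Since the process has no independent increments, I cannot apply the classical second Borel--Cantelli lemma directly; instead I would control the pairwise dependence of the events $A_n=\{\|Z_{u_n}-h\|_\infty<\varepsilon\}$ through the correlation between the rescaled blocks $Z_{u_i}$ and $Z_{u_j}$, which condition (iv) forces to vanish as the scale separation $|i-j|\to\infty$ (this is precisely where $u^{-\gamma}|\mathbb E[X(t)X(us)]|\to0$ enters). A Kochen--Stone (correlation) Borel--Cantelli lemma then shows $A_n$ occurs infinitely often, a.s., so $h$ is a cluster point; a diagonal argument over a countable dense subset of the open ball and a final approximation of norm-one points $h$ by $(1-\delta)h$ extend this to all of $K$. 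Combining the two halves identifies the cluster set with $K$, which is the assertion. The quantitative pairwise-correlation bound feeding Kochen--Stone, and checking that the small-ball exponent is governed exactly by $\|h\|_{\mathcal H}^2$, are the only steps requiring care; everything else is bookkeeping.
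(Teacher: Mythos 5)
The paper does not actually prove this statement: immediately before the theorem it says ``The first result can be obtained as in Corollary 3.1 in [Arcones 1995], a bit adapted and simplified for our purposes,'' so the result is imported from the literature and there is no in-paper proof to compare against. Your outline reproduces the canonical architecture of that external proof --- an upper half via the Borell--TIS concentration inequality along geometric scales $u_n=c^n$ plus Borel--Cantelli, and a lower half via a Cameron--Martin shift, the divergence of $\sum_n (\log u_n^{-1})^{-\|h\|_{\mathcal H}^2}$ for $\|h\|_{\mathcal H}<1$, and a correlation (Kochen--Stone type) second Borel--Cantelli argument --- and you deploy conditions (i)--(iv) in the right places: (i) to make $W_u(t)=X(ut)/u^{\gamma}$ equidistributed across scales, (ii)--(iii) for boundedness and compactness of the RKHS unit ball, and (iv), combined with (i), to force $\mathbb{E}[W_{u_i}(t)W_{u_j}(s)]=(u_j/u_i)^{-\gamma}\mathbb{E}[X(t)X((u_j/u_i)s)]\to 0$ as $|i-j|\to\infty$, which is exactly the asymptotic decorrelation the lower half needs. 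You also implicitly correct the normalisation: the statement writes $u^{1/2}$ although the self-similarity index is $\gamma$; your $u^{\gamma}(2\log\log u^{-1})^{1/2}$ is the right one and coincides with the stated one only in the application $\gamma=1/2$.

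Two caveats. First, this is an outline, not a proof: the two genuinely technical steps --- the oscillation control of $\sup_{u_{n+1}\le u\le u_n}\|Z_u-Z_{u_n}\|_\infty$ needed to pass from the geometric subsequence to the continuous limit $u\to 0+$, and the quantitative pairwise bound $\mathbb{P}(A_i\cap A_j)\le (1+\delta_{|i-j|})\mathbb{P}(A_i)\mathbb{P}(A_j)$ that Kochen--Stone requires --- are named but not carried out, and they are where essentially all the work in Arcones's argument lives. Second, your parenthetical claim that mean-square continuity of a Gaussian process implies sample continuity is false in general; it is also unnecessary here, since the conclusion is phrased in $l_\infty([0,T])$ and compactness of the unit ball $K$ of the RKHS in the sup norm already follows from the sample boundedness in (ii) (via Sudakov's minoration the index set is totally bounded in the canonical metric). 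With those repairs your route is the same as the cited one, so nothing is gained or lost relative to the paper's (absent) proof beyond making the dependence on (i)--(iv) explicit.
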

The next result is the combination of Theorem 1 from \cite{Lai}, Theorem 4 from \cite{oodaira}, and the result formulated in Section 3  from \cite{Lai}.
\begin{theorem}\label{loglog2}
Let $\{X(t), t\ge 0\}$ be  a separable real-valued zero-mean Gaussian process
with  continuous covariance function $R (s, t) $ satisfying  the conditions 
\begin{itemize}
    \item [(v)] (a) For any $T > 0$, there exists a continuous non-decreasing function $g_T(h)$  such
that for all $t, t+h\in [0, T]$,
$$\mathbb{E}[ X(t+h)-X(t)]^2\le g_T(h)\to 0$$ as $h\to 0$,

(b)
  $$(g_T(1))^{-1/2}\int_1^\infty (g_T(e^{-u^2} ))^{1/2}du\le C<\infty,$$
 and 
 
 (c) $R(T, T)/g_T(1)\uparrow\infty$ as $T\to\infty$.
\item[(vi)] There exists a positive function $v(r), r>0$, such that
$v(r)\uparrow\infty$ as $r\to\infty$ and $$R(rs, rt)=v(r)R(s,t)$$ for all $r>0, s,t\ge 0.$
\item[(vii)] $R(s, t)$  has a representation of the form $R(s, t)=\int_0^{s\wedge t}Q(t,v)Q(s, v)dv, s, t>0,$
 where $\int_0^tQ^2(t, v)dv<\infty$ for all $t>0$, and there exists a function $u(r)$ such that
 
$Q (r t, r v) = u (r) Q (t, v)$  for all $r > 0, t, v > 0$ and $ru^2(r)\uparrow\infty$ as $r\to\infty$.
\end{itemize}
Then, for any $T>0$,  with probability one, the set  
    $$\left\{ \frac{X(nt)}{n^{1/2}(2 \log \log n^{-1})^{1/2}},\,t\in[0,T]\right\},$$
    is relatively compact in $C([0,T])$, and its set of limit points, as $n\to\infty$ coincides with   the unit ball  of
the RKHS of   $\{X(t),  t\in[0,T]\}$.
\end{theorem}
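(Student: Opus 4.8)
The plan is to show that conditions (v)--(vii) place $X$ within the scope of the Strassen-type functional law of iterated logarithm for self-similar Gaussian processes, following the scheme of \cite{oodaira} and \cite{Lai}; the argument is the ``infinity'' counterpart of Theorem \ref{loglog1}. First I would exploit the Volterra-type factorization in condition (vii): writing $X(t)=\int_0^t Q(t,v)\,dW(v)$ for a standard Wiener process $W$, the covariance reads $R(s,t)=\int_0^{s\wedge t}Q(t,v)Q(s,v)\,dv$, and the reproducing kernel Hilbert space of $\{X(t),\,t\in[0,T]\}$ is identified as the image $\mathcal{H}=\{F(t)=\int_0^t Q(t,v)f(v)\,dv:\,f\in L_2([0,T])\}$, equipped with $\|F\|_{\mathcal{H}}=\|f\|_{L_2}$. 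Its unit ball $K$ is convex, symmetric, and—by the entropy bound recorded below—compact in $C([0,T])$.

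Next I would record the scaling structure. Substituting $Q(rt,rv)=u(r)Q(t,v)$ into (vii) and changing variables gives $R(rs,rt)=r\,u^2(r)\,R(s,t)$, so that $v(r)=r\,u^2(r)$ in (vi) and, by Gaussianity, the finite-dimensional distributions of $\{X(rt)/v(r)^{1/2}\}_t$ coincide with those of $\{X(t)\}_t$. This self-similarity reduces the behaviour of the normalized family $\xi_n(t):=X(nt)/\bigl(n^{1/2}(2\log\log n)^{1/2}\bigr)$ to a fixed interval, while the monotonicity $r\,u^2(r)\uparrow\infty$ ensures that distinct geometric scales $n_k=\lfloor\theta^k\rfloor$ (with $\theta>1$) carry asymptotically decoupled information. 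Condition (v)(a)--(b) is a Dudley-type entropy integral: it yields a.s. continuity of $X$, a uniform modulus of continuity on $[0,T]$, hence the equicontinuity needed for relative compactness of $\{\xi_n\}$ in $C([0,T])$ via Arzel\`a--Ascoli, and the compactness of $K$.

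The two inclusions then follow from the standard Gaussian machinery. For the upper bound (relative compactness, and every limit point in $K$) I would apply the Borell--Tsirelson--Ibragimov--Sudakov concentration inequality to $\sup_{t\le T}\func{dist}\bigl(\xi_{n_k},K\bigr)$ along $n_k=\lfloor\theta^k\rfloor$; summability of the resulting Gaussian tail bounds and the first Borel--Cantelli lemma give control along $(n_k)$, and the uniform modulus of continuity interpolates to intermediate $n$. For the lower bound (every $h\in K$ with $\|h\|_{\mathcal{H}}<1$ is a limit point a.s.) I would invoke the Cameron--Martin shift to bound $\mathbb{P}\bigl(\|\xi_{n_k}-h\|_\infty<\epsilon\bigr)$ from below by $\exp\bigl(-(1+o(1))(\log\log n_k)\|h\|_{\mathcal{H}}^2\bigr)$, and combine this with the conditional second Borel--Cantelli lemma after projecting out the dependence of $\xi_{n_{k+1}}$ on $\{\xi_{n_j}\}_{j\le k}$. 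Letting $\theta\downarrow 1$, $\epsilon\downarrow 0$, and using the density of $\{\|h\|_{\mathcal{H}}<1\}$ in $K$ closes the argument.

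The main obstacle is exactly the coupling across scales in the lower bound: unlike Brownian motion, $X$ has dependent increments, so the events $\{\|\xi_{n_k}-h\|_\infty<\epsilon\}$ are not independent. The remedy is to use the kernel $Q$ from (vii) to split $X(n_{k+1}t)$ into a part measurable with respect to $W$ on $[0,n_kT]$ and an independent remainder, and to show—using $r\,u^2(r)\uparrow\infty$ together with the entropy bound (v)(b)—that the correlation between consecutive scales decays fast enough for the conditional Borel--Cantelli argument to apply. This quantitative decoupling is the only step that genuinely requires the structural hypotheses (vi)--(vii) beyond soft Gaussian estimates.
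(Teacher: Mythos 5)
The paper does not prove this statement at all: it is imported as ``the combination of Theorem 1 from \cite{Lai}, Theorem 4 from \cite{oodaira}, and the result formulated in Section 3 from \cite{Lai}'', and is then used as a black box in Section 8. So there is no internal proof to compare against; what your proposal does is reconstruct, in outline, the argument living in those references, and the reconstruction is faithful to their architecture: the Volterra representation $X(t)=\int_0^t Q(t,v)\,dW(v)$ from (vii), the identification of the RKHS as the image of $L_2$ under the kernel operator, the scaling identity $v(r)=r\,u^2(r)$ linking (vi) and (vii), an upper bound along geometric subsequences $n_k=\lfloor\theta^k\rfloor$ via Gaussian concentration and Borel--Cantelli, and a lower bound via Cameron--Martin small-ball estimates combined with decoupling of consecutive scales through the Volterra kernel. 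Your decoupling computation is the right one: the variance of the part of $X(n_{k+1}t)$ measurable with respect to $W$ on $[0,n_kT]$, after normalization by $v(n_{k+1})=n_{k+1}u^2(n_{k+1})$, equals $\int_0^{T/\theta}Q^2(t,w)\,dw\to 0$ as $\theta\to\infty$, which is exactly what makes the conditional Borel--Cantelli step go through.

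Two places where the sketch is thinner than the cited proofs. First, condition (v)(c), namely $R(T,T)/g_T(1)\uparrow\infty$, never enters your argument; in \cite{Lai} it is what guarantees that the oscillation controlled by the entropy integral in (v)(b) is asymptotically negligible against the growing normalization, i.e., it is precisely the ingredient needed for the step you invoke but do not justify --- interpolating from the geometric subsequence $n_k$ to intermediate values of $n$. Second, the normalization $n^{1/2}$ in the conclusion tacitly assumes $v(n)=n$ (the $1/2$-self-similar case relevant to $B^{\mathcal{H}}_\alpha$); in the stated generality of (vi)--(vii) the correct normalization is $v(n)^{1/2}$, and your own identity $v(r)=r\,u^2(r)$ makes this visible, so it should be flagged rather than silently written as $n^{1/2}$ (the paper's $\log\log n^{-1}$ is a typo for $\log\log n$, which you correctly repair). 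Neither point invalidates the outline, but both would need to be filled in for the proposal to stand as a proof rather than as a correct roadmap to \cite{Lai} and \cite{oodaira}.
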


\section*{Acknowledgments}
L.B. acknowledges financial support under NRRP, by the Italian Ministry of University and Research, funded by NextGenerationEU – Project Title ``Non–Markovian Dynamics and Non-local Equations'' – 202277N5H9 - CUP: D53D23005670006 - Grant No. 973, 2023.

A.D.G is partially supported by  Italian Ministry of University and Research (MUR) under PRIN 2022 (APRIDACAS),
Anomalous Phenomena on Regular and Irregular Domains: Approximating Complexity for the Applied Sciences, Funded by EU - Next Generation EU
CUP B53D23009540006 - Grant Code 2022XZSAFN - PNRR M4.C2.1.1.

Y.M. is supported by The Swedish Foundation for Strategic Research, grant UKR24-0004, and by the Japan Science and Technology Agency CREST, project reference number JPMJCR2115.

\end{document}